\documentclass[twoside,12pt]{article}
\usepackage{amsmath,amssymb,amsthm,amsfonts}
\usepackage{graphics,graphicx,picture} 
\usepackage{epsfig} 
\usepackage{extarrows}
\usepackage{tikz}
\usepackage{subcaption}
\usepackage{epstopdf}
\usepackage[colorlinks=true]{hyperref}
\usepackage{txfonts}
\usepackage{enumerate}
\usepackage[title]{appendix}
\usepackage[numbers,sort&compress]{natbib}
\setlength{\bibsep}{0.75ex}
\topmargin=-0.71in
\oddsidemargin-10mm \evensidemargin-10mm
\textheight240mm \textwidth188mm
\usepackage{subcaption}
\graphicspath{{figures/}}		  

\numberwithin{equation}{section}

\newcommand{\be}{\begin{equation}}
\newcommand{\ee}{\end{equation}}

\newcommand{\ben}{\begin{eqnarray*}}
\newcommand{\enn}{\end{eqnarray*}}

\newcommand{\eps}{\epsilon}

%
\newtheorem{proposition}{Proposition}[section]
\newtheorem{theorem}{\textbf Theorem}[section]
\newtheorem{lemma}{\textbf Lemma}[section]
\newtheorem{remark}{\textbf Remark}[section]

 \numberwithin{equation}{section}

\def\sech{\mathrm{sech}}

%

\renewcommand{\theequation}{\arabic{section}.\arabic{equation}}

\begin{document}

\title{\textbf{Existence, Stability and Slow Dynamics of Spikes in a 1D
    Minimal Keller--Segel Model with Logistic Growth}}

 \author{Fanze Kong\thanks{Fanze Kong: fzkong@math.ubc.ca}, \hspace{0.15cm}
 Michael J. Ward\thanks{Michael Ward: ward@math.ubc.ca},\hspace{0.15cm}
 and
 Juncheng Wei\thanks{Juncheng Wei: jcwei@math.ubc.ca}\\
 \fontsize{11pt}{9pt}\selectfont { Dept. of Mathematics, University of British
  Columbia, Vancouver, Canada, V6T 1Z2}}
\date{}
\maketitle
\vspace{-0.25in} We analyze the existence, linear stability, and slow
dynamics of localized 1D spike patterns for a Keller--Segel model of
chemotaxis that includes the effect of logistic growth of the cellular
population.  Our analysis of localized patterns for this two-component
reaction-diffusion (RD) model is based, not on the usual limit of a
large chemotactic drift coefficient, but instead on the singular limit
of an asymptotically small diffusivity $d_2=\epsilon^2\ll 1$ of the
chemoattractant concentration field.  In the limit $d_2\ll 1$,
steady-state and quasi-equilibrium 1D multi-spike patterns are
constructed asymptotically. To determine the linear stability of
steady-state $N$-spike patterns we analyze the spectral properties
associated with both the ``large'' ${\mathcal O}(1)$ and the ``small''
$o(1)$ eigenvalues associated with the linearization of the
Keller-Segel model. By analyzing a nonlocal eigenvalue problem
characterizing the large eigenvalues, it is shown that $N$-spike
equilibria can be destabilized by a zero-eigenvalue crossing leading
to a competition instability if the cellular diffusion rate $d_1$
exceeds a threshold, or from a Hopf bifurcation if a relaxation time
constant $\tau$ is too large. In addition, a matrix eigenvalue problem
that governs the stability properties of an $N$-spike steady-state
with respect to the small eigenvalues is derived. From an analysis of
this matrix problem, an explicit range of $d_1$ where the $N$-spike
steady-state is stable to the small eigenvalues is
identified. Finally, for quasi-equilibrium spike patterns that are
stable on an ${\mathcal O}(1)$ time-scale, we derive a differential
algebraic system (DAE) governing the slow dynamics of a collection of
localized spikes. Unexpectedly, our analysis of the KS model with
logistic growth in the singular limit $d_2\ll 1$ is rather closely
related to the analysis of spike patterns for the Gierer-Meinhardt RD
system.

\vspace{0.3cm}
\noindent{\sc Keywords}: Chemotaxis, logistic growth, spikes, matched
 asymptotic expansions, nonlocal eigenvalue problem.
\maketitle

\section{Introduction}\label{sec:intro}
The study of pattern formation phenomena for RD
systems originates from the pioneering work of Alan M. Turing
\cite{Turing}. In an attempt to understand the mechanism underlying
biological morphogenesis, he discovered that spatially homogeneous
steady-states of reaction kinetics for multi-component systems that
are linearly stable can be destabilized in the presence of
diffusion. This diffusion-induced instability, now commonly referred
to as a Turing instability, typically leads to the formation of stable
spatial patterns that break the symmetry of the spatially uniform
state. Based on this insight, modern bifurcation-theoretic tools such
as weakly nonlinear multi-scale analysis and Lyapunov-Schmidt
reductions have been used ubiquitously to characterize pattern
formation near onset for RD systems. However, to analyze localized
patterns for RD systems away from the onset of where a Turing instability
occurs, new theoretical approaches are needed. Over the past two
decades, there has been a focus on developing such novel analytical tools
to study the existence, stability, and dynamical behavior of
``far-from-equilibrium'' spatially localized patterns, such as stripes
and spots, for two-component RD systems that combine only diffusion
and nonlinear reactions (see \cite{WeiWinterBook}, \cite{dkp},
\cite{chenwan2011stability}, \cite{ward_survey} and references
therein).

In contrast, the analytical study of localized pattern formation for
RD systems that combine diffusion, nonlinear reactions, and {\em
  advection} poses many new theoretical challenges
(cf.~\cite{wang2013spiky,Chen2014,tsecrime,bast_veg}.  The most common
such RD models are chemotaxis-type systems, such as the prototypical
Keller--Segel (KS) system \cite{Keller1970,Keller1971}, that are
widely used to model how cells or bacteria direct their movements in
response to an environmental chemical stimulus, such as observed in
some foundational experiments
(cf.~\cite{engelmann1882sauerstoffausscheidung},
\cite{adler1975chemotaxis}, \cite{brown1974temporal}). Chemotactic
effects have been shown to play a key role in a wide variety of other
biological processes such as, cell-cell interactions in the immune
system, the organization of tissues during embryogenesis, and the
growth of tumor cells
\cite{brinkmann2004neutrophil,oppenheim2005alarmins,rossi2000biology}.

In 1971, Keller and Segel \cite{Keller1970,Keller1971} proposed the
following coupled RD system to model chemotaxis:
 \begin{equation}\tag{K-S}\label{K-S}
\left\{
\begin{array}{ll}
  \tau u_t= \overbrace{d_1\Delta u}^{\text{cellular diffusion}}-
  \overbrace{\chi \nabla\cdot (S(u,v) \nabla v)}^{\text{advection}}+
  \overbrace{f(u)}^{\text{source}}\,,&x \in \Omega\,,\, t>0\,, \\
  v_t=\overbrace{d_2\Delta v}^{\text{chemical signal diffusion}}+
  \overbrace{g(u,v)}^{\text{chemical production/consumption}}\,,&x \in \Omega\,,\,
                                                                  t>0\,.
\end{array}
\right.
\end{equation}
Here $\Omega$ is either a bounded domain with smooth boundary
$\partial\Omega$ or the whole space $\mathbb R^N$ with $N\geq 1$.  In
(\ref{K-S}), $u$ is the cellular density, $v$ is the chemotactic
concentration, $\tau$ is the reaction time constant, $d_1$ and $d_2$
are diffusivities of $u$ and $v$, respectively, while $S(u,v)$ models
the chemotactic or directed movement. The chemotactic drift coefficient
$\chi$ measures the relative strength of this directed motion.  In a bounded
domain, no-flux boundary conditions are usually imposed on (\ref{K-S})
to ensure that the cellular aggregation is spontaneous.

One main research focus on the chemotaxis PDE system (\ref{K-S}) is
the study of self-aggregating pattern-formation phenomena and the
determination of whether finite-time singularities can occur.  There
are two well-known approaches to study the possibility of such blow-up
behavior.  The first approach is to analyze the well-posedness and
global existence of solutions, which can rule out the trivial
dynamics.  The other approach is to construct spatially inhomogeneous
patterns and to study their local and long time behaviors. For a 
survey of diverse applications and some mathematical results for
(\ref{K-S}) and its variants see
\cite{horstmann2001,Horstmann2003,horstmann2004,hillen2009user,wink_rev,
  painter_review}.

Our goal is to analyze certain pattern-formation properties for
a KS model with logistic growth, given by
\begin{equation}\label{timedependent1}
   \begin{cases}
     \tau u_t=d_1\Delta u - \chi\nabla\cdot (u \nabla v)+\mu
     u(\bar u-u) \,,
     &x\in\Omega,\, t>0\,,  \\
     v_t=d_2\Delta v- v + u, &x\in\Omega\,,\, t>0\,,\\
     \frac{\partial u}{\partial\textbf{n}}(x,t)= \frac{\partial
       v}{\partial \textbf{n}}(x,t)=0\,,&x\in\partial\Omega\,,\,
     t>0\,,
   \end{cases}
 \end{equation}
 where $u(x,0)=u^{0} (x)$ and $v(x,0)=v^{0}(x)$ are non-negative
 initial data. Here $S$ and $g$ in (\ref{K-S}) are taken to be linear,
 i.~e.~$S(u,v)=u$ and $g(u,v)=u-v$. In (\ref{timedependent1}),
 $f(u)=\mu u(\bar u-u)$ describes the cellular population growth
 dynamics, where $\mu>0$ denotes the logistic growth rate and
 $\bar u>0$ represents the carrying capacity of the habitat for cells.
 Before discussing some previous results for (\ref{timedependent1}), we will
 highlight some results for the case $f(u)\equiv 0.$

Without logistic growth, (\ref{timedependent1}) in 2D admits blow-up
phenomenon, which depends on the cellular mass
$M:=\int_{\Omega}u(x,0)\,dx$.  In particular, if $M<M_0:={4\pi/\chi}$
for the bounded domain or $M<M_0:={8\pi/\chi}$ for the whole space
$\mathbb R^2$, the solution to (\ref{timedependent1}) will globally
exist \cite{nagai1997application}; otherwise (\ref{timedependent1})
admits finite time blow-up solutions
\cite{nanjundiah1973,childress1981,herrero1996,senba2000some,wang2002steady}.
For the steady-state problem of (\ref{timedependent1}) in 2D, the
pioneering study of Lin, Ni and Takagi
\cite{lin1988large,ni1993locating} constructed large amplitude
stationary solutions analytically.  Motivated by this seminal work, it
has been subsequently revealed that non-constant steady states with
$f(u)\equiv 0$ can exhibit a wide range of solution behaviors
\cite{Gui1999,del2006collapsing,carrillo2021boundary}.  In particular,
Wei and Del Pino \cite{del2006collapsing} constructed a multi-spike
equilibrium to (\ref{timedependent1}) in 2D via the ``localized energy
method".  In contrast to the 2D case, the solution to
(\ref{timedependent1}) in 1D with $f\equiv 0$ is uniformly bounded in
time \cite{osaki2001finite,nagai1995}.  For the stationary
counterpart, spatially non-uniform steady states were constructed in
\cite{hillen2004one,kang2007stability,wang2013spiky,Chen2014}.  In
particular, Wang and Xu \cite{wang2013spiky} adopted an innovative
bifurcation-theoretic approach to directly treat the steady-state
problem for (\ref{timedependent1}) in 1D without relying significantly
on the special structure of (\ref{timedependent1}).

With the logistic source term, i.e. when $f(u)=\mu u(\bar u-u)$,
Winkler et
al. \cite{winkler2008,winkler2010log,winkler2014,xiang2018,xiang2018sub,lin2017convergence}
showed that the solution of (\ref{timedependent1}) globally exists in
any dimension when the effect of the logistic growth is strong enough.
Some results regarding the construction of spatially inhomogeneous
equilibria for (\ref{timedependent1}) are given in
\cite{jin2016pattern,wang2016qualitative,kolokolnikov2014,KWX2022,KWX20221}.
However, the dynamics of (\ref{timedependent1}) can be highly
intricate and are not nearly as well understood as for the case where
$f(u)\equiv 0$.  Hillen and Painter et
al. \cite{painter2011spatio,hillen2013merging} studied
(\ref{timedependent1}) numerically and revealed the possibility of
periodic and chaotic dynamics consisting of repeated spike nucleation
and amalgamation events.

From a formal asymptotic analysis together with numerical simulations,
Kolokolnikov et al. \cite{kolokolnikov2014} showed that there exists
three types of spiky steady states to (\ref{timedependent1}) in 1D.
In particular, they constructed a locally stable single interior spike
solution, which does not occur in the minimal KS model without the
logistic source term.  To more fully understand how a logistic source
term allows for spiky patterns, the focus in this paper is to study
the existence, stability, and dynamics of spiky solutions to the 1D
version of (\ref{timedependent1}), which is formulated as
\begin{subequations}\label{timedependent}
\begin{align}
  \tau u_t&=d_1u_{xx} - \chi (u v_x)_x + \mu u(\bar u - u) \,, \quad
            |x|<1\,, \,\, t>0\,; \qquad u_x(\pm 1,t)=0\,,
  \label{timedependent_a}\\
  v_t&=d_2v_{xx} - v + u\,, \quad |x|<1\,,\,\, t>0;
       \qquad v_x(\pm 1,t)=0 \,,\label{timedependent_b}
\end{align}
\end{subequations}
with $u(x,0)=u^{0}(x)$ and $v(x,0)=v^{0}(x)$.  Our main goal is to
construct $N$-spike equilibria for (\ref{timedependent}) with equal heights
in the limit where the diffusivity $d_2$ is small, and to analyze the
linear stability properties of these localized steady-state patterns.
Labeling $d_2=\epsilon^2\ll 1$, the steady-state problem for
(\ref{timedependent}) on $|x|<1$ is
\begin{align}\label{ss1}
  d_1u_{xx} - \chi(u v_x)_x + \mu u(\bar u - u) = 0\,, \qquad
        \epsilon^2 v_{xx} - v + u =  0\,; \qquad
        u_{x}(\pm 1)=v_x(\pm 1)=0\,.
\end{align}
We will also explicitly construct quasi-equilibrium patterns for
(\ref{timedependent}) where the spike locations evolve dynamically on
some asymptotically long time scale as $\epsilon\to 0$ towards their
steady-state locations. 

We emphasize that our analysis of localized pattern formation for
(\ref{timedependent}) in the singular limit $d_2\to 0$ is in distinct
contrast to the previous analytical and numerical studies of pattern
formation properties for (\ref{timedependent}) that were undertaken in
the more traditional large chemotactic drift limit $\chi\gg 1$ (cf.~
\cite{KWX2022}, \cite{KWX20221}, \cite{jin2016pattern}, and
\cite{wang2016qualitative}). In the singular limit $d_2\ll 1$, our
analysis and results for the existence, linear stability, and slow
dynamics of 1D spike patterns for (\ref{timedependent}) will be shown
to be rather closely related to  corresponding studies of 1D spike
patterns for the Gierer-Meinhardt (GM) RD model
(cf.~\cite{iron2001stability}, \cite{iw2}, \cite{wwhopf},
\cite{wei2007existence}).

The outline of this paper is as follows.  In \S \ref{sec2}, we
construct $N$-spike quasi-equilibrium spike patterns for
(\ref{timedependent}) using the method of matched asymptotic
expansions in the limit $\epsilon\ll 1$. Our analysis reveals a novel,
analytically tractable, sub-inner asymptotic structure that
characterizes the spatial profile of a localized spike. With regards
to the linear stability analysis, in \S \ref{sec3} and \S \ref{sec4}
we analyze the large and small eigenvalues in the discrete spectrum of
the linearization of (\ref{timedependent}) around an $N$-spike
steady-state, respectively. The spectral properties for the large
eigenvalues are shown to be governed by a nonlocal eigenvalue problem
(NLEP), which has a somewhat similar form to the NLEP that arises in
the study of spike stability in the GM model. For $\tau=0$, our NLEP
linear stability analysis will provide a range of $d_1$ values for
which the $N$-spike equilibrium is linearly stable. Moreover, for
$\tau>0$, we show that spike amplitude oscillations can occur via a
Hopf bifurcation associated with the NLEP. Hypergeometric functions
are shown to be key for accurately calculating the stability
thresholds from the NLEP. For the small eigenvalues, in \S
\ref{sec:small_thresh} we will determine analytically an explicit
range of $d_1$ for which the steady-state solution is linearly stable
to translation instabilities.

A differential-algebraic (DAE) system characterizing slow spike
dynamics for quasi-equilibrium patterns is derived in \S
\ref{sec5}. The slow dynamics obtained from this DAE system are
favorably compared with corresponding results computed from full PDE
numerical simulations of (\ref{timedependent}). Moreover, in \S
\ref{sec:balance_dae} we show that the explicit expressions for the
small eigenvalues that are obtained in \S \ref{sec4} can also be
derived from a linearization of the DAE dynamics around the
steady-state spike locations. In \S \ref{sec:discussion}, we suggest a
few open problems, and we compare and contrast the analytical approach
used and results previously obtained for spike patterns in the 1D GM
model with that obtained herein for the KS model
(\ref{timedependent}).
 
\section{Asymptotic Analysis of the $N$-Spike Quasi-equilibrium}\label{sec2}
In this section, we construct $N$-spike quasi-steady state solutions to
(\ref{ss1}) in the limit $\epsilon\ll 1$ by using the method of matched
asymptotic expansions. We define the centers of the spikes as $x_j$, for
$j=1,\ldots,N$, and assume that they are well-separated in the sense that
$|x_j-1|={\mathcal O}(1)$, $|x_j+1|={\mathcal O}(1)$, and
$|x_i-x_j|={\mathcal O}(1)$ for $i\not= j.$

\subsection{Inner Solution}
In the inner region near each $x_j$ where the cellular density $u$ and
the chemical concentration $v$ are localized, we introduce new local
variables $y=\epsilon^{-1}(x-x_j)$, $U_j(y)=u(x_j+\epsilon y)$, and
$V_j(y)=v(x_j+\epsilon y)$, and we expand
\begin{align}\label{expansion}
 U_j(y)=
  U_{0j}(y)+\epsilon^2 U_{1j}(y)+\ldots\,, \quad V_j(y)=
  V_{0j}(y)+\epsilon^2V_{1j}(y)+\ldots\,, \quad y=\epsilon^{-1}(x-x_j)\,.
\end{align}
Here the subscripts $0$ and $1$ in $(U_{0j},V_{0j})$ and
$(U_{1j},V_{1j})$ are the orders of the expansion, while $j$ refers
to the $j^{\mbox{th}}$ inner region.   The leading order terms, found by
substituting (\ref{expansion}) into (\ref{ss1}), yield on
$-\infty<y<\infty$ that
\begin{align}\label{leading}
  (U_{0j}^{\prime}-\bar\chi U_{0j}V_{0j}^{\prime})^{\prime}=0\,, \quad
  U_{0j}^{\prime}(0)=0 \,; \qquad
  V_{0j}^{\prime\prime}-V_{0j}+U_{0j}=0\,,\quad V_{0j}^{\prime}(0)=0\,.
\end{align}
In this so-called {\em core problem}, we define
$\bar\chi:={\chi/d_1}$.  Below, for simplicity, we omit the subscript
$j$ for $U_{0j}$ and $V_{0j}$. Moreover, in the analysis below, terms
such as $h_x$ and $H^{\prime}$ denote differentiation in $x$ and $y$,
respectively.

Upon imposing $U_0\rightarrow 0$ as $\vert y \vert\rightarrow \infty$,
(\ref{leading}) yields $U_0=C_j e^{\bar \chi V_0},$ where the constant
$C_j>0$ will be determined below. Then, from (\ref{leading}), we
conclude that the spike profile is characterized by a homoclinic
solution $V_0$ to
\begin{align}\label{Hamiltonian}
  V_0^{\prime\prime}+Q(V_0)=0\,, \quad -\infty <y<+\infty\,; \qquad
  V_0^{\prime}(0)=0 \,; \qquad V_0\rightarrow s_j\,, \,\,\,
  V_0^{\prime}\,, \, V_0^{\prime\prime}\rightarrow 0\,,\,\,
  \mbox{as} \,\, |y|\to \infty \,,
 \end{align}
 where $Q(V_0):=-V_0+C_je^{\bar \chi V_0}$ and $s_j$ satisfies
 $Q(s_j)=0$, so that $C_je^{\bar \chi s_j}=s_j$.  The first integral
 of (\ref{Hamiltonian}) is
\begin{align}\label{firstintegral}
  \frac{1}{2}\left(V_0^{\prime}\right)^2+K(V_0;C_j)=0\,, \qquad
  K(V_0;C_j):=\int_{s_j}^{V_0}Q(\xi)\, d\xi=\frac{1}{2}\left(s^2_j-
  V_0^2\right)+\frac{C_j}{\bar \chi}\left(e^{\bar \chi V_0}-
   e^{\bar \chi s_j}\right)\,.
\end{align}
Imposing that $V_0$ is even and monotone decreasing in $y>0$, we
obtain from (\ref{firstintegral}) that
\begin{align}\label{V0prime}
V_0^{\prime}=-\sqrt{-2K(V_0;C_j)}\quad\text{for}\quad 0\leq y <\infty \,.
\end{align}
By separating variables in (\ref{V0prime}), we obtain an
implicit equation for $V_0$, defined on $y\geq 0$,  given by
\begin{align}\label{implicit}
y=\int_{V_0}^{v_{\max j}}\frac{d\xi}{\sqrt{-2K(\xi;C_j)}} \,,
\end{align}
where $v_{\max j}:=V_0(0)$ is the amplitude of $V_0$. By setting
$V_0^{\prime}(0)=0$ in (\ref{V0prime}) and using
$s_j=C_j e^{\bar{\chi}s_j}$, it follows that $v_{\max j}$ satisfies
the nonlinear algebraic equation
\begin{align}\label{keyrelation}
  -\frac{1}{2} v_{\max j}^2+\frac{1}{2}s^2_j+
  \frac{C_j}{\bar\chi}e^{\bar\chi v_{\max j}}-\frac{s_j}{\bar\chi}=0\,.
\end{align}

In the outer region, defined at ${\mathcal O}(1)$ distances from the
centers of the spikes, we will construct a solution where $u$ and $v$
are $o(1)$ as $\epsilon\to 0$. As a result, we anticipate from
the matching condition that $s_j=o(1)$ as $\epsilon\to 0$,
which allows us to approximate our implicit form of $V_0$ given by
(\ref{implicit}). To this end, we suppose that $s_j=o(1)$ and
$v_{\max j}^{-1}=o(1)$ when $\epsilon\ll 1$, so that
(\ref{keyrelation}) reduces to
\begin{align}\label{keyrelationship1}
\frac{C_j}{\bar\chi}e^{\bar\chi v_{\max j}}\sim \frac{1}{2}v_{\max j}^2 \,.
\end{align}
Next, we introduce new variables $z$ and $\tilde{V}_0$, which
constitute a sub-layer within the inner region, defined by
\begin{equation*}
  z= y \, v_{\max j} \,, \qquad  V_0(y)=v_{\max j}+\tilde V_0(z)\,.
\end{equation*}
By using (\ref{keyrelationship1}), and where primes now indicate
derivatives in $z$, we can rewrite the $V_0$-equation in
\eqref{Hamiltonian} as
\begin{align}\label{innerinerleadingbefore}
  v_{\max j}^2\tilde V_{0}^{\prime\prime}-(v_{\max j}+\tilde V_0)+
  \frac{\bar\chi}{2}  v_{\max j}^2 e^{\bar\chi \tilde V_0}=0\,, \qquad
  \infty<z<+\infty\,.
\end{align}
In this $j^{\mbox{th}}$ sub-inner region, we expand $\tilde V_0(z)$ as
$\tilde V_0=\tilde V_{00}(z)+o(1)$. From
(\ref{innerinerleadingbefore}), and assuming that $v_{\max j}\gg 1$,
we obtain an explicitly solvable leading order sub-inner equation
\begin{align}\label{tiltev00}
  \tilde V_{00}^{\prime\prime} +\frac{1}{2}\bar\chi e^{\bar\chi \tilde V_{00}}=0 \,,
  \qquad \mbox{so that} \qquad
    \tilde V_{00}=\frac{1}{\bar\chi}
  \log\left[\sech^2\left(\frac{\bar\chi}{2}z\right)
  \right]\,, \quad U_0=C_je^{\bar\chi V_0}\sim
C_j e^{\bar\chi(v_{\max j}+\tilde V_{00}) }\,.
\end{align}

We now summarize our results in the inner and sub-inner regions for
the leading order profile of a quasi-equilibrium spike when $s_j\ll 1$
and $v_{\max j}\gg 1$. In the sub-inner region, where
$\vert x-x_j\vert \leq {\mathcal O}\left(\frac{\epsilon}{v_{\max
      j}}\right)$, we have
\begin{align}\label{innersolution1}
  u\sim  U_0\sim \frac{1}{2}\bar\chi v_{\max j}^2e^{\bar\chi\tilde V_{00}(z) }\,, \qquad
  v\sim V_0\sim  v_{\max j}+\tilde V_{00}(z)\,,
\end{align}
where $z=v_{\max j}\epsilon^{-1}(x-x_j)$ and $\tilde V_{00}$ is given
by (\ref{tiltev00}). In the inner region, where $\vert x-x_j\vert\leq
{\mathcal O}(\epsilon)$, we have
\begin{align}\label{innersol2}
u\sim C_je^{\bar\chi V_0(y)}\,, \qquad v\sim V_0(y)\,,
\end{align}
where $y=\epsilon^{-1}(x-x_j)$ and $V_0(y)$ is determined implicitly
by (\ref{implicit}). Here, the three constants $C_j$, $s_j$ and
$v_{\max j}$ satisfy the two nonlinear algebraic equations
$C_je^{\bar \chi s_j}=s_j$ and (\ref{keyrelation}). The required
additional equation arises below from matching the far-field of each
inner solution to an outer solution.  The far-field of the
leading order inner solution (\ref{innersol2}) gives only that
$u\sim v\rightarrow s_j$ as $\vert y\vert \rightarrow\infty$, but has
no gradient information.

As such, we must refine our inner analysis to one higher order so as
to match with any spatial gradients of the outer solution at the spike
locations.  To this end, we substitute (\ref{expansion}) into
(\ref{ss1}) and obtain at next order that
\begin{subequations}\label{nextorder}
\begin{align}
 &      \left[U_{1j}^{\prime}-\bar\chi (U_0V_{1j}^{\prime})-
         \bar\chi (U_{1j}V_0^{\prime})\right]^{\prime} =
  \frac{\mu}{d_1}(U_0^2-\bar uU_0)\,,\quad -\infty<y<\infty\,;
       \qquad U_{1j}^{\prime}(0)=0\,, \label{nextorder_a}\\
 &  V_{1j}^{\prime\prime}-V_{1j}+U_{1j}=0\,, \quad -\infty<y<\infty\,;\qquad
                       V_{1j}^{\prime}(0)=0\,.\label{nextorder_b}
\end{align}
\end{subequations}
Upon integrating (\ref{nextorder_a}) over $(0,y)$, we obtain the
flux-balancing condition
\begin{align}\label{fluxbalancing}
  U_{1j}^{\prime}-\bar\chi U_{1j}V_0^{\prime}-\bar\chi U_0V_{1j}^{\prime}=
  \frac{\mu}{d_1}\int_0^{y}\left[ U_0^2(\xi)-\bar u U_0(\xi)\right]\, d\xi\,.
\end{align}
By letting $y\to \infty$, and assuming that $U_{1j}V_{0}^{\prime}$ and
$U_0V_{1j}^{\prime}$ are negligible in this limit, we obtain from
(\ref{fluxbalancing}) that
\begin{align}\label{u1farfield}
U_{1j}^{\prime}\rightarrow  \frac{\mu}{d_1}\int_0^\infty (U_0^2-\bar u U_0) \, dy\,,
\qquad \mbox{as} \quad y\to + \infty \,.
\end{align}

To explicitly determine $U_{1j}^{\prime}$ as $y\rightarrow +\infty,$ we must
estimate the two integrals in (\ref{u1farfield}) involving $U_0$ and $U_0^2.$
By using the sub-inner solution (\ref{innersolution1}), we readily calculate
that
\begin{align}\label{u1farfieldafter}
  \int_0^\infty U_0 \, dy\sim v_{\max j} \,, \qquad \int_0^\infty U_0^2 \, dy
  \sim \frac{1}{3}\bar\chi v_{\max j}^3 \,.
\end{align}
In this way, by substituting (\ref{u1farfieldafter}) into
(\ref{u1farfield}), we obtain that
$U_{1j}^{\prime}\rightarrow {\mu \bar\chi v_{\max j}^3/(3d_1)}$ as
$y\rightarrow +\infty$. In a similar way, we obtain from
(\ref{nextorder}) that
$U_{1j}^{\prime}\rightarrow -{\mu \bar\chi v_{\max j}^3/(3d_1)}$ as
$y\to-\infty$. Since the inner solution is expanded as
$u\sim U_0+\epsilon^2 U_{1j}$, and $U_{0y}$ is exponentially small as
$|y|\to\infty$, we obtain for the outer solution that
$u_{x}\sim \epsilon U_{ij}^{\prime}$ as $x\to x_j$ and
$|y|\to \infty$. By using the expressions above for $U_{1j}^{\prime}$ as
$y\to\pm \infty$, we conclude by matching the far-field behavaior of the
inner solution to the outer solution that the outer solution must satisfy
the limiting behavior
\begin{align}\label{jumpx}
u_x\sim \pm  \epsilon\frac{\mu \bar\chi}{3 d_1} v_{\max j}^3\, \quad
\mbox{as} \quad x\rightarrow x_{j}^{\pm} \,.
\end{align}
This matching condition shows that, in the outer region, $u_x$ must
have a jump discontinuity across each $x=x_j$.

Finally, we must confirm, through a self-consistency argument, that
$U_{1j}V_{0}^{\prime}$ and $U_0V_{1j }^{\prime}$ for $y\to \infty$ can
be neglected in (\ref{fluxbalancing}). To do so, we observe that,
although $U_{1j}$ grows linearly for $\vert y\vert$ sufficiently
large, the exponential decay of $V_{0}^{\prime}$ ensures that
$U_{1j} V_{0}^{\prime}$ can be neglected as $y\to\infty$. Moreover,
since $u\sim v$ in the outer region when $\epsilon\ll 1$, we obtain
that $U_0\sim V_0$ and $U_{1j}\sim V_{1j}$ as
$y\rightarrow \pm\infty$.  Combining these estimates with
$U_0\rightarrow s_j$ as $\vert y\vert\rightarrow +\infty,$ we obtain
$U_0V_{1j}^{\prime}\approx s_j U_{1j}^{\prime}\ll U_{1j}^{\prime}$ in
(\ref{fluxbalancing}) since $s_j\ll 1$. As a result, our assumptions
that $U_{1j}V_{0}^{\prime}$ and $U_0V_{1j}^{\prime}$ can be neglected
in (\ref{fluxbalancing}) as $|y|\to \infty$ are self-consistent.


\subsection{Outer Solution and Matching}
Next, we construct the solution in the outer region.  When
$\epsilon\ll 1$, we expand $u$ and $v$ as $u=u_o+o(1)$ and
$v=v_o+o(1)$.  From (\ref{ss1}) for $v$ we get $v_o=u_o$, so
that (\ref{ss1}) for $u$ reduces to
\begin{align}\label{outereq}
  u_{oxx}-\bar\chi(u_o u_{ox})_x+\frac{\mu}{d_1}u_o(\bar u-u_o)=0\,,
  \quad x\in (-1,1)\backslash \bigcup\limits_{j=1}^N x_j \,.
\end{align}
There are two ways to approximate the solution to (\ref{outereq}).
The first approach is to introduce the new variable
\begin{align}\label{w:wdef}
  w:=u_o-\frac{\bar \chi}{2}u_o^2 \,.
\end{align}
In terms of $w$, we obtain from (\ref{outereq}) that $w$ satisfies
\begin{align}\label{wouterproblembefore}
  w_{xx}+\frac{\mu}{d_1}\bigg[\frac{\bar u}{\bar\chi}-\frac{2}{\bar\chi^2}+
  \bigg(\frac{2}{\bar\chi^2}-\frac{\bar u}{\bar\chi}\bigg)
  \sqrt{1-2w\bar\chi}+\frac{2w}{\bar\chi}\bigg]=0\,, \qquad
x\in (-1,1)\backslash \bigcup\limits_{j=1}^N x_j \,.
\end{align}
We have $\sqrt{1-2\bar\chi w}\sim 1-\bar\chi w$ since $u_o$ is small
in the outer region. With this approximation, and setting
$w\sim w_o$, we obtain from (\ref{w:wdef}) and
(\ref{wouterproblembefore}) that $w_o\sim u_o$ in the outer region, where
$w_o$ solves the leading order problem
\begin{align}\label{outerproblem1}
  w_{oxx}+\frac{\bar u \, \mu }{d_1}w_o=0\,, \qquad x\in (-1,1)\backslash
  \bigcup\limits_{j=1}^N x_j\,.
\end{align}
Observe that (\ref{outerproblem1}) follows {\em exactly} from
(\ref{wouterproblembefore}), with no approximation, for the special
parameter set $\bar{u}={2/\bar{\chi}}$.  The second way to approximate
(\ref{outereq}) is to collect the leading order terms in
(\ref{outereq}) directly.  In fact, since $u_o$ is small,
$(u_ou_{ox})_x$ and $u_o^2$ are higher order terms in the outer
region.  By neglecting these terms in (\ref{outereq}), we also obtain
(\ref{outerproblem1}) since $u_o\sim w_o.$ Finally, for
(\ref{outerproblem1}), we require from (\ref{jumpx}) that $w_{o}$ must
satisfy the jump condition
$w_{ox}(x_j^{+})-w_{ox}(x_j^{-})= \frac{2\bar\chi \mu}{3d_1}v_{\max
  j}^3\epsilon$ across each $x_j$. In this way, we obtain the
leading order outer problem
\begin{align}\label{outerproblem}
     {\mathcal L}_0 w_o := \frac{d_1}{\mu}w_{oxx}+\bar u w_o=\frac{2\bar\chi}{3 }
  \epsilon\sum\limits_{k=1}^N v_{\max k}^3\delta(x-x_k)\,, \quad -1<x<1\,;
  \qquad w_{ox}(\pm 1)=0\,.
\end{align}

To analyze the solvability of (\ref{outerproblem}), we first observe that
(\ref{outerproblem}) admits the nontrivial homogeneous solution
\begin{equation}\label{prop:w0h}
  w_{oh}(x):=\cos\left( \frac{m(x+1)\pi}{2} \right) \,, \qquad 
\mbox{when} \quad d_1 = d_{1Tm} := \frac{4\mu \bar u }{m^2\pi^2}\,,
\quad \mbox{for} \,\, m=1,2,\ldots \,.
\end{equation}
As shown in Appendix \ref{app:Turing}, the interpretation of these
critical, or {\em resonant}, values of $d_1$ are that they correspond
precisely to where there is a bifurcation from the spatially uniform
solution $v=u=0$ for (\ref{timedependent}) on $|x|<1$. This trivial
solution for (\ref{timedependent}) on $|x|<1$ is linearly stable only
when $d_1> {4\mu \bar{u}/\pi^2}$.  When $d_1=d_{1Tm}$, there is a
solution (non-unique) to (\ref{outerproblem}) only if a compatibility
condition is satisfied. However, as shown in Appendix \ref{app:Turing} this
condition is automatically satisfied for an $N$-spike steady-state
solution.

To solve (\ref{outerproblem}) when $d_1\neq d_{1Tm}$, we introduce the
Helmholtz Green's function $G(x;x_k)$ satisfying
\begin{align}\label{greenequation}
\frac{d_1}{\mu}G_{xx}+\bar u G=\delta(x-x_k)\,,\quad -1<x<1\,; \qquad
G_x(\pm 1;x_k)=0\,.
\end{align}
For $d_1\neq d_{1Tm}$, the explicit solution to (\ref{greenequation}) is
\begin{align}\label{greenfunction}
  G(x;x_k)=\sqrt{\frac{\mu}{\bar ud_1}}\left[\tan(\theta(1+x_k))+
  \tan(\theta(1-x_k))\right]^{-1} \, \left\{\begin{array}{ll}
\frac{\cos(\theta(1+x))}{\cos(\theta(1+x_k)}\,, &-1<x<x_k\,,\\
\frac{\cos(\theta(1-x))}{\cos(\theta(1-x_k))}\,,&x_k<x<1\,, 
\end{array}
\right. \, \qquad  \theta:=\sqrt{\frac{\mu\bar u}{d_1}}\,.
\end{align}
In terms of (\ref{greenfunction}), the solution to (\ref{outerproblem})
when  $d_1\neq d_{1Tm}$ is
\begin{align}\label{woouter}
 u_o\sim w_o=\frac{2\bar\chi}{3 }\epsilon\sum_{k=1}^N v_{\max k}^3G(x;x_k)\,.
\end{align}

Our final step in the construction is to match the inner and outer solutions
to obtain the third algebraic equation needed to determine
$s_j$, $C_j$ and $v_{\max j}$.  Since $w_o\sim u_o$ in the outer
region, we impose $w_o(x_j)=s_j$ to get
\begin{align}\label{quasisj}
  s_j=\frac{2\bar\chi}{3 }\epsilon \sum_{k=1}^N v_{\max k}^3G(x_j;x_k) \,, \qquad
   j=1,\ldots,N\,,
\end{align}
when $d_1\neq d_{1Tm}$.  Combining (\ref{quasisj}), (\ref{keyrelation}), and
$C_je^{\bar \chi s_j}=s_j$, we obtain the following coupled algebraic system:
\begin{align}\label{algebraicreal}
C_je^{\bar \chi s_j}-s_j=0\,, \qquad
         -\frac{1}{2} v_{\max j}^2+\frac{1}{2}s^2_j+\frac{C_j}{\bar\chi}
         e^{\bar\chi v_{\max j}}-\frac{s_j}{\bar\chi}=0\,, \qquad
s_j=\frac{2\bar\chi}{3 }\epsilon \sum\limits_{k=1}^N v_{\max k}^3G(x_j;x_k)\,.
\end{align}

Finally, we observe that the matching condition (\ref{jumpx})
between the inner and outer solutions holds only when the spike
locations $x_j$ are equally spaced, and are given by $x_j=x_j^0$ where
\begin{align}\label{locations}
x_j^0:=-1+\frac{2j-1}{N}\,, \qquad j=1,\ldots,N\,.
\end{align}
Moreover, in (\ref{realag}) of Appendix \ref{appendixA} we calculate
$\sum\limits_{k=1}^NG(x_j;x_k)$ explicitly to show that it is
independent of $j$ when $x_j=x_j^0$, $x_k=x_k^{0}$, and
$d_1\neq d_{1Tm}$. As a result, for equally-spaced spikes we have
$s_j=s_0$, where $s_0$ is given by
\begin{align}\label{ag}
  s_0=\frac{2\bar\chi}{3 }a_gv_{\max 0}^3 \epsilon \sim \epsilon a_g \,
  \int_{-\infty}^{\infty} U_0^2 \, dy \,, \qquad \mbox{with}\quad
  a_g:=\sum_{k=1}^N G(x_j^0;x_k^0) =\frac{1}{2} \sqrt{\frac{\mu}{d_1\bar{\mu}}}
  \cot\left(\frac{\theta}{N}\right)\,.  
\end{align}
When $x_j=x_j^0$ and $d_1\neq d_{1Tm}$, our $N$-spike
quasi-equilibrium is the approximation to a true steady-state solution
of (\ref{ss1}). Setting $s_j=s_0$ for all $j$, we obtain from
(\ref{algebraicreal}) that $C_j=C_0$ and $v_{\max j}=v_{\max 0}$ for
all $j$, satisfy
\begin{align}\label{algebraic2}
C_0=s_0e^{-\bar \chi s_0}\,, \qquad
  -\frac{1}{2} v_{\max 0}^2+\frac{1}{2}s^2_{0}+
  \frac{C_0}{\bar\chi}e^{\bar\chi v_{\max 0}}-\frac{s_0}{\bar\chi}=0\,, \quad
\mbox{where} \quad s_0=\frac{2\bar\chi}{3 }\epsilon v_{\max 0}^3 a_g \,,
\end{align}
with $a_g$ as given in (\ref{ag}).  By combining (\ref{algebraic2}),
we obtain a single nonlinear equation for $v_{\max 0}$ given by
\begin{align}\label{singlevmax}
  -\frac{1}{2} v_{\max 0}^2+\frac{2}{9}\bar\chi^2v_{\max 0}^6a_g^2\epsilon^2+
  \frac{2}{3}a_gv_{\max 0}^3\epsilon
  e^{\bar\chi v_{\max 0}-\frac{2}{3}a_g\bar\chi^2v_{\max 0}^3\epsilon}-{\frac{2}{3 }
  a_gv_{\max 0}^3 \epsilon}=0 \,.
\end{align}
In terms of the solution $v_{\max 0}$ to (\ref{singlevmax}), $s_0$ and
$C_0$ are given by (\ref{algebraic2}).  Moreover, assuming
$v_{\max 0}\gg 1$, $v_{\max 0}^3\epsilon\ll 1$, and $a_g>0$, a
dominant balance argument on (\ref{singlevmax}) for $\epsilon\ll 1$
yields that
\begin{equation}\label{vm:dominant}
    \frac{1}{2}v_{\max 0}^2 \sim \frac{s_0}{\bar{\chi}} e^{\bar{\chi}v_{\max 0}} \sim
  \frac{2}{3}a_g v_{\max 0}^3 \epsilon e^{\bar{\chi}v_{\max 0}} \,.
\end{equation}
This shows that $v_{\max 0}={\mathcal O}(-\log\eps)
\gg 1$, so that the consistency condition $v_{\max 0}^3\epsilon\ll 1$ is
satisfied. We summarize our results regarding the
construction of the $N$-spike steady-state in the following formal
proposition:

\begin{proposition}\label{prop1}
  Let $\epsilon\ll 1$, assume that $d_1\neq d_{1Tm}$, where $d_{1Tm}$
  is defined in (\ref{prop:w0h}). Label the set
  $\mathcal I:=\{1,2,\ldots,N\}$. Then, the $N$-spike
  quasi-equilibrium to (\ref{ss1}), defined by $(u_{q},v_q)$, has the
  following asymptotic behavior in $-1<x<1$:
\begin{align}\label{Nspikebutquasi}
\left\{\begin{array}{ll}
u_q(x)&\sim\left\{\begin{array}{ll}
       \frac{1}{2}\bar\chi v_{\max  k}^2\sech^2\Big(\frac{\bar\chi}{2}
       \frac{v_{\max k}(x-x_k)}{\epsilon}\Big)\,,&x\in\Big\{x\in\mathbb R\big|\,
    |x-x_k|\leq {\mathcal O}\Big(\frac{\epsilon}{\vert\log\epsilon\vert}\Big)\,,
                 ~\exists k\in\mathcal I\Big\}\,,\\
    C_ke^{\bar\chi V_0\big(\frac{x-x_k}{\epsilon}\big)}\,,&x\in\Big\{x\in\mathbb R\big|\,
    {\mathcal O}\Big(\frac{\epsilon}{\vert \log\epsilon\vert}\Big)\ll
     |x-x_k|\leq {\mathcal O}(\epsilon),~\exists k\in\mathcal I\Big\}\,,\\
    \frac{2\bar\chi\epsilon}{3 }\sum\limits_{k=1}^N v_{\max k}^3G(x;x_k)\,,&
  x\in\Big\{x\in\mathbb R\big|\, {\mathcal O}(\epsilon)\ll\vert x-x_k\vert\,,
        ~\forall  k\in\mathcal I\Big\}\,,\\
\end{array}
\right.\\
\\
v_q(x)&\sim \left\{\begin{array}{ll}
  v_{\max k}+\frac{1}{\bar\chi}\log\Big[\sech^2\Big(\frac{\bar\chi}{2}
     \frac{v_{\max k}(x-x_k)}{\epsilon}\Big)\Big]\,,&x\in\Big\{x\in\mathbb R
    \big|\, |x-x_k|\leq {\mathcal O}\Big(\frac{\epsilon}{\vert
       \log\epsilon\vert}\Big),~\exists k\in\mathcal I\Big\}\,,\\
     V_0\big(\frac{x-x_k}{\epsilon}\big)\,,& x\in\Big\{x\in\mathbb R\big|\,
   {\mathcal O}\big(\frac{\epsilon}{\log\epsilon}\big)\ll\vert x-x_k\vert\leq
   {\mathcal O}(\epsilon)\,,~\exists  k\in\mathcal I\Big\}\,,\\
   \frac{2\bar\chi\epsilon}{3 }\sum\limits_{k=1}^Nv_{\max k}^3G(x;x_k)\,,&
  x\in\Big\{x\in\mathbb R\big| \, {\mathcal O}(\epsilon)\ll\vert
   x-x_k\vert\,,~\forall  k\in\mathcal I\Big\}\,.
\end{array}
\right.
\end{array}
\right.
\end{align}
Here $\bar\chi={\chi/d_1}$, $G(x;x_k)$ is defined by
(\ref{greenfunction}) and $V_0$ is given implicitly by
(\ref{implicit}).  Moreover, the constants $v_{\max j}$, $s_j$ and
$C_j$ are determined by (\ref{algebraicreal}).  When $x_j=x_j^0$, as
given in (\ref{locations}), the spikes are equally spaced and
$(u_q,v_q)$ becomes an approximation to the true $N$-spike equilibrium
solution $(u_e,v_e)$ to (\ref{timedependent}), in which
\begin{equation*}
 v_{\max j}=v_{\max 0}\,,\quad s_j=s_0\,, \quad C_j=C_0, \quad
 \mbox{for} \quad j=1,\ldots,N\,.
\end{equation*}
In terms of the solution $v_{\max 0}$ to (\ref{singlevmax}), $s_0$ and
$C_0$ are given by (\ref{algebraic2}) where $a_g$ is defined in
(\ref{ag}).
\end{proposition}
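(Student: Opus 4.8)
# Proof Proposal for Proposition 2.1

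The plan is to treat Proposition 2.1 as a bookkeeping assembly of the matched asymptotic construction carried out in \S 2.1--2.2, verifying consistency at each matching interface rather than proving any new estimate. First I would establish the inner/sub-inner/outer hierarchy of length scales: the outer region at ${\mathcal O}(1)$ distances, the inner region $|x-x_j|\leq{\mathcal O}(\epsilon)$, and the sub-inner region $|x-x_j|\leq{\mathcal O}(\epsilon/v_{\max j})={\mathcal O}(\epsilon/|\log\epsilon|)$ using $v_{\max 0}={\mathcal O}(-\log\epsilon)$ from \eqref{vm:dominant}. For each region I would quote the leading-order profile already derived: \eqref{innersolution1} with \eqref{tiltev00} in the sub-inner region (giving the $\sech^2$ form after exponentiating $\tilde V_{00}$), \eqref{innersol2} with the implicit relation \eqref{implicit} in the inner region, and \eqref{woouter} in the outer region. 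Overlap consistency between sub-inner and inner follows because $\tilde V_{00}\to 0$ as $z\to\pm\infty$ matches $V_0\to v_{\max j}$ as $y\to 0^\pm$ within the inner variable; between inner and outer it follows from $V_0\to s_j$ together with the gradient-matching condition \eqref{jumpx}, which produces the $\delta$-source outer problem \eqref{outerproblem}.

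Second I would verify the closure of the algebraic system. The three unknowns $C_j,s_j,v_{\max j}$ per spike are constrained by: the zero-of-$Q$ relation $C_je^{\bar\chi s_j}=s_j$ from \eqref{Hamiltonian}; the first-integral/amplitude relation \eqref{keyrelation} obtained by setting $V_0'(0)=0$ in \eqref{firstintegral}; and the matching relation \eqref{quasisj} obtained from $w_o(x_j)=s_j$ using the outer Green's function representation \eqref{woouter}. This gives exactly \eqref{algebraicreal}, valid whenever $d_1\neq d_{1Tm}$ so that \eqref{greenequation} is uniquely solvable (the resonant case is excluded by hypothesis, and Appendix \ref{app:Turing} is cited for why the compatibility condition is nonetheless met for $N$-spike states — but that is outside the proposition). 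For the equally-spaced configuration $x_j=x_j^0$ I would invoke the computation in Appendix \ref{appendixA}, equation \eqref{realag}, showing $\sum_k G(x_j^0;x_k^0)=a_g$ is independent of $j$; symmetry of \eqref{algebraicreal} then forces $s_j\equiv s_0$, hence $C_j\equiv C_0$ and $v_{\max j}\equiv v_{\max 0}$, reducing the system to the single scalar equation \eqref{singlevmax} whose solvability with $v_{\max 0}\gg1$, $v_{\max 0}^3\epsilon\ll1$ follows from the dominant balance \eqref{vm:dominant}.

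Third I would collect the pieces into the piecewise formula \eqref{Nspikebutquasi}, checking that the stated regions tile $(-1,1)$ and that on each overlap the adjacent expressions agree to leading order: on ${\mathcal O}(\epsilon/|\log\epsilon|)\ll|x-x_k|\leq{\mathcal O}(\epsilon)$ the $\sech^2$ form degenerates (its argument is large) into the far field of the implicit $V_0$, and on ${\mathcal O}(\epsilon)\ll|x-x_k|$ the inner far field $u\sim v\to s_k$ matches the value $w_o(x_k)$ of the outer Green's function sum. I would also record the self-consistency check already given in the text: $U_{1j}V_0'$ is negligible by exponential decay of $V_0'$, and $U_0V_{1j}'\approx s_jU_{1j}'\ll U_{1j}'$ since $s_j\ll1$, so that \eqref{fluxbalancing} indeed reduces to \eqref{u1farfield} and the jump \eqref{jumpx} is legitimate.

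The main obstacle — and the reason the proposition is labeled ``formal'' — is that none of this constitutes a rigorous existence proof: one would need a fixed-point or Lyapunov--Schmidt argument controlling the remainder $(u-u_q,v-v_q)$ in a suitable weighted norm, and the multi-scale structure (three nested layers, with the sub-inner scale itself $\epsilon$-dependent through $v_{\max 0}\sim-\log\epsilon$) makes the choice of function spaces delicate, particularly near the Turing-resonant values $d_{1Tm}$ where the outer linear operator ${\mathcal L}_0$ in \eqref{outerproblem} has nontrivial kernel. Since the proposition only asserts the asymptotic \emph{form} of the quasi-equilibrium, my proof would stop at verifying internal consistency of the expansion and the solvability of \eqref{algebraicreal}--\eqref{singlevmax}, flagging the rigorous justification as beyond the present scope.
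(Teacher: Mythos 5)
Your proposal takes the same route as the paper: Proposition~\ref{prop1} is explicitly labeled a \emph{formal} proposition and is nothing more than a bookkeeping assembly of the matched-asymptotic construction carried out in \S\ref{sec2} --- the inner/sub-inner/outer hierarchy, the three algebraic closure relations \eqref{algebraicreal} (from $Q(s_j)=0$, the first-integral amplitude condition \eqref{keyrelation}, and the Green's-function match \eqref{quasisj}), and the reduction to \eqref{singlevmax} via $a_g$ in \eqref{realag} when the spikes are equally spaced. The paper does not attempt a rigorous existence proof and you are right to flag the proposition as only a consistency check on the expansion.

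One slip worth correcting: to justify the sub-inner/inner overlap you write that ``$\tilde V_{00}\to 0$ as $z\to\pm\infty$ matches $V_0\to v_{\max j}$ as $y\to 0^\pm$.'' From \eqref{tiltev00}, $\tilde V_{00}(z)=\frac{1}{\bar\chi}\log\bigl[\sech^2(\tfrac{\bar\chi}{2}z)\bigr]$ vanishes at $z=0$ but tends to $-\infty$ (linearly, $\tilde V_{00}\sim -|z|+\frac{\log 4}{\bar\chi}$) as $|z|\to\infty$; what you have written matches both expansions at the spike center rather than in the overlap region. The actual overlap happens for $1\ll z\ll v_{\max j}$ (equivalently $v_{\max j}^{-1}\ll y\ll 1$), where the sub-inner far field gives $V_0\sim v_{\max j}(1-y)+\frac{\log 4}{\bar\chi}$; this agrees to leading order with the implicit inner profile \eqref{implicit} essentially by construction, since \eqref{tiltev00} is obtained by a $v_{\max j}\gg 1$ dominant balance of the inner equation \eqref{innerinerleadingbefore}. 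This is a local misstatement, not a structural gap: the approach you lay out is the one the paper uses.
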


When $d_1=d_{1Tm}$, for $m=1,\ldots,N-1$, we show in Appendix
\ref{app:Turing} that for a steady-state solution where the spike
locations $x_j$ satisfy (\ref{locations}), the outer problem
(\ref{outerproblem}) has a non-unique solution that can be found using
a generalized Green's function.  Finally, to establish the range of
$d_1$ where our steady-state analysis is valid we must also ensure
that the outer solution $w_o$ is positive on $|x|<1$.  This
constraint, discussed in Appendix \ref{app:Turing}, motivates the
following key remark that introduces the notion of an {\em admissible set}
${\mathcal T}_e$ for $d_1$.

\begin{remark}\label{remark:pos}
  For an $N$-spike steady-state solution, where the spikes are
  centered at (\ref{locations}), the range of $d_1$ where
  (\ref{outerproblem}) has a unique and positive solution $w_o$ is
  characterized by an admissible set $\mathcal{T}_e$, which we
  define by
  \begin{equation}\label{d1:admiss}
    {\mathcal T}_e :=\lbrace{ \,\,  d_1 \,\, \vert \,\, d_1> d_{1pN}:=
      \frac{4\mu \bar u}{N^2\pi^2}\,, \quad d_1\neq 
        d_{1Tm} := \frac{4\mu \bar u}{m^2\pi^2}\,, \quad m=1,\ldots,N-1
          \rbrace}\,.
  \end{equation}
  For $d_1>d_{1pN}$, we have $a_g>0$ in (\ref{ag}), so that
  $s_0>0$. As $d_1\to d_{1pN}$ from above, $a_g\to 0^{+}$ and
  $v_{\max 0}\to +\infty$.  Moreover, when $d_1\in \mathcal{T}_e$, the
  outer solution $w_o$ on the interval of width ${2/N}$ between two
  adjacent spikes, which is asymptotically close to the uniform state
  $u=0$, is linearly stable.  At the positivity threshold
  $d_1=d_{1pN}$, the trivial solution $u=0$ on a domain of length
  ${2/N}$ undergoes a Turing instability and this threshold appears to
  trigger a nonlinear spike nucleation event for (\ref{timedependent})
  between adjacent spikes (see Figure \ref{figurenucleation} in \S
  \ref{sec:small_thresh} below). In contrast, for an $N$-spike
  quasi-equilibrium pattern, the outer solution $w_o$ between spikes
  is positive when
  \begin{equation}\label{d1:qe}
    d_1>\frac{L_{max}^2\mu \bar u}{\pi^2} \,, \qquad
    L_{max} :=\max\lbrace{  |x_1+1|\,; \, |x_{N}-1|\,; \,
      |x_{j+1}-x_j|\,, \,\, j=1,\ldots,N \rbrace}\,.
  \end{equation}
\end{remark}

\subsection{Global Balancing and Comparison with Numerics}

As an analytical confirmation of our asymptotic results, we show that
they are consistent with a global balancing condition. By integrating 
(\ref{ss1})  for $u$ over $|x|\leq 1$, we obtain that the global
balance condition
\begin{align}\label{globalintegral}
\int_{-1}^1 u(\bar u-u)\, dx=0\,,
\end{align}
must hold. Defining $f(u):=u(\bar u-u)$, we decompose the left-hand side of
(\ref{globalintegral}) into the two terms
\begin{align}\label{globalintegral1}
  \int_{-1}^1 u(\bar u-u)\,dx=\overbrace{\int_{-1}^1 [f(u)-f(s_0)]\,dx}^{I_1}+
  \overbrace{\int_{-1}^1 f(s_0)\,dx}^{I_2} \,.
\end{align}
Since the inner and outer regions both contribute to $I_1,$ we
decompose $I_1$ as $I_{11}+I_{12}$, where $I_{11}$ and $I_{12}$
represent the $N$ inner integrals and the outer integral,
respectively.  For $I_{11}$, since $u\rightarrow s_0$ as
$y\rightarrow \pm \infty,$ we have $f(u)-f(s_0)\rightarrow 0$ as
$y\rightarrow \pm \infty.$ Therefore, by using (\ref{V0prime}) and
since there are $N$ identical inner regions, we identify that
\begin{align}\label{I11inner}
  I_{11}\sim 2N \epsilon\int_{0}^\infty[f(U_0)-f(s_0)]\, dy=2N \epsilon
  \int_{s_0}^{v_{\max 0}}\frac{f(C_0e^{{\bar{\chi}} \xi})-f(s_0)}
  {\sqrt{-2K(\xi;C_0)}}\, d\xi \,,
\end{align}
where $K$ is given by (\ref{firstintegral}). However, to estimate
(\ref{I11inner}) we can more simply use the fact that $U_0\gg 1$ in
each sub-inner region.  In this way, by using (\ref{u1farfieldafter}),
we obtain that
\begin{align}\label{globalbalance1}
  I_{11}\sim &2N \epsilon \int_{0}^\infty (\bar uU_0- U_0^2) \, dy\sim
               2N \bar u v_{\max 0}\epsilon-\frac{2}{3}N \bar
               \chi v_{\max 0}^3\epsilon \,.
\end{align}
Next, by using the outer solution (\ref{woouter}), together with
$\int_{-1}^{1}G(x;x_k^{0})\, dx={1/\bar{u}}$, we estimate the
outer integral as
\begin{align}\label{globalbalance2}
I_{12}\sim &\int_{-1}^1f(w_o)\, dx-\int_{-1}^1f(s_0)\, dx\,, \nonumber\\
  =&\frac{2}{3} \bar\chi\bar u v_{\max 0}^3\epsilon\sum_{k=1}^N
     \int_{-1}^1G(x;x_k^0)\, dx-\Big(\frac{2\bar\chi\bar u}{3 }v_{\max 0}^3
     \epsilon\Big)^2\int_{-1}^1\Big[\sum_{k=1}^NG(x;x_k^0)\Big]^2\, dx
     -\int_{-1}^1f(s_0)\, dx\,, \nonumber\\
  =&\frac{2}{3} N \bar\chi v_{\max 0}^3 \epsilon
        -\int_{-1}^1f(s_0)\, dx +  {\mathcal O}(\epsilon^2 v_{\max 0}^{6})\,.
\end{align}
We substitute (\ref{globalbalance1}) and (\ref{globalbalance2}) into
(\ref{globalintegral1}) to find
$\int_{-1}^1 u(\bar u-u) \, dx = {\mathcal O} \left(\epsilon v_{\max
    0}\right)\ll 1$, and so the global balancing condition is satisfied
to this order as $\epsilon\to 0$.

\begin{table}[htbp]
    \centering
    \begin{tabular}{|c|c|c|c|c|c|c|}
    \hline
        $d_2$& $d_1=\chi$ &$\bar u$ &$u_{\max}$(num) &$u_{\max}$(asy)& $v_{\max}$(num) &$v_{\max}$(asy) \\\hline
         0.02& 1 &2&3.8935& 3.4633& 2.6937& 2.6318\\\hline
        0.004& 1 &2& 5.2575& 5.0329&3.1702&3.1727\\\hline
         0.002& 1 &2&5.9773&5.8239&3.3955&3.4129\\\hline
             0.02& 10 &2&3.8599&3.1702&2.6623&2.5180\\\hline
        0.004& 10 &2 &5.0958&4.6664&3.1099 &3.0550\\\hline
         0.002& 10 &2&5.7514&5.4210&3.3218&3.2927\\\hline
             0.02& 1 &3&5.9159&4.4409&3.3970&2.9802\\\hline
        0.004& 1 &3&7.3629&6.2531& 3.7971&3.5364 \\\hline
         0.002& 1 &3&8.1535&7.1617&4.0023&3.7846\\\hline
    \end{tabular}
    \caption{\fontsize{11pt}{9pt}\selectfont {
      The asymptotic results for $u_{\max}$ and
      $v_{\max}$, obtained from (\ref{Nspikebutquasi}), for various $d_2$,
      $d_1$ and $\bar u$ are compared with FlexPDE7 numerical results.}}
    \label{tab1}
  \end{table}

\begin{table}[htbp]
    \centering
    \begin{tabular}{|c|c|c|c|c|c|c|}
    \hline
        $d_2$& $d_1=\chi$ &$\bar u$ &$u_{\text{bdry}}$(num) &$u_{\text{bdry}}$(asy)& $v_{\text{bdry}}$(num) &$v_{\text{bdry}}$(asy) \\\hline
         0.02& 1 &2&0.4799& 0.5195& 0.5047& 0.5195\\\hline
        0.004& 1 &2& 0.3744& 0.3923&0.3734&0.3923\\\hline
         0.002& 1 &2&0.3340&0.3412&0.3336&0.3412\\\hline
             0.02& 10 &2&0.4295&0.3824&0.4567&0.3824\\\hline
        0.004& 10 &2 &0.3166&0.3047&0.3166 &0.3047\\\hline
         0.002& 10 &2&0.2790&0.2695&0.2790&0.2695\\\hline
             0.02& 1 &3&0.3350&0.5538&0.3537&0.5538\\\hline
        0.004& 1 &3&0.2878&0.3883& 0.2867&0.3883 \\\hline
         0.002& 1 &3&0.2627&0.3305&0.2622&0.3305\\\hline
    \end{tabular}
    \caption{\fontsize{11pt}{9pt}\selectfont {The asymptotic results
        for $u_{\text{bdry}}$ and $v_{\text{bdry}}$, obtained from
        (\ref{Nspikebutquasi}), for various $d_2$, $d_1$ and $\bar u$
        are compared with FlexPDE7 \cite{flex2021} numerical results.}}
    \label{tab2}
\end{table}

\begin{figure}[htbp]
\centering
\begin{subfigure}[t]{0.5\textwidth}
    \includegraphics[width=1.0\linewidth,height=7.0cm]{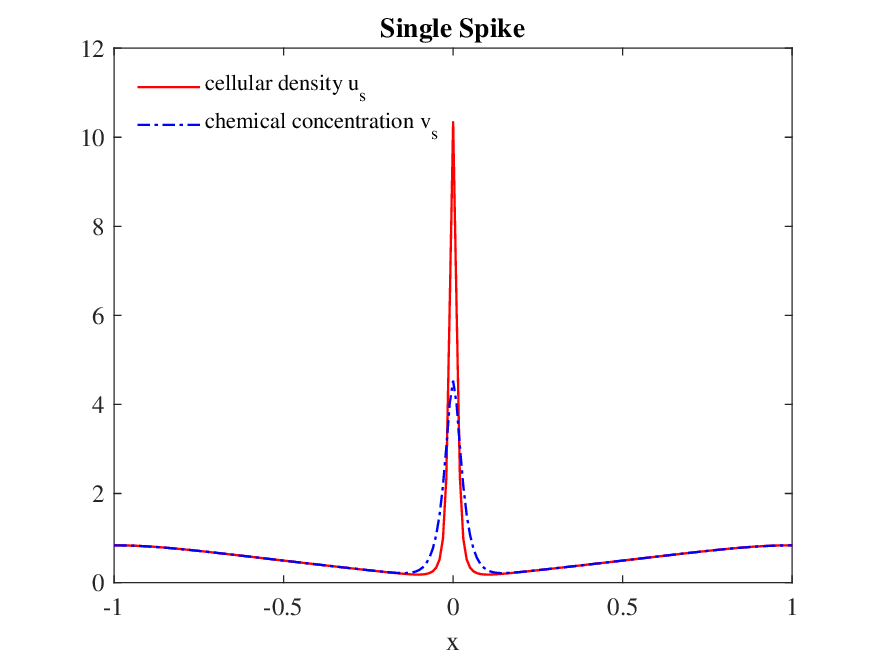}
         \caption{a one-spike steady-state}
\end{subfigure}\hspace{-0.25in}
\begin{subfigure}[t]{0.5\textwidth}
  \includegraphics[width=1.0\linewidth,height=7.0cm]{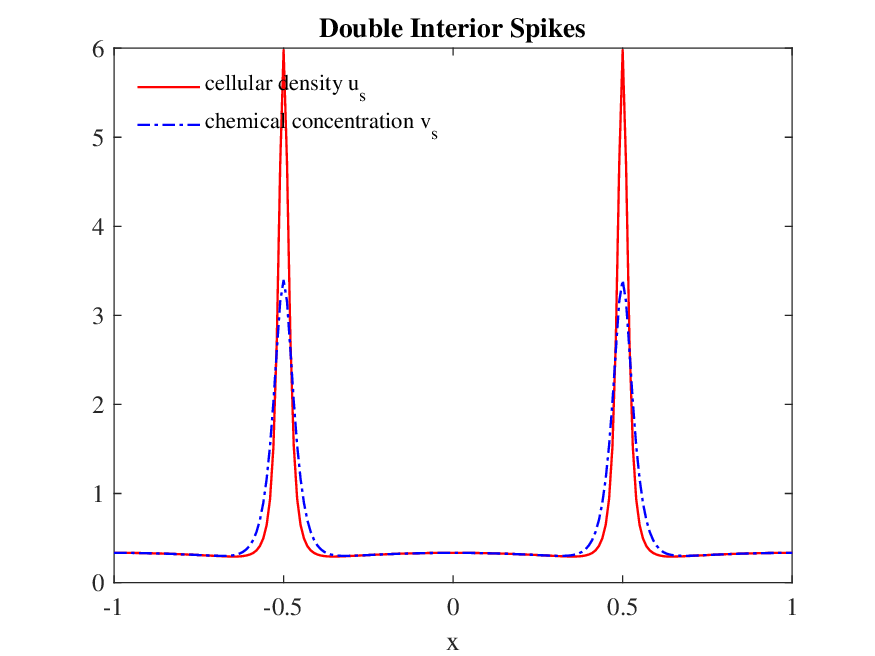}
    \caption{a two-spike steady-state}
\end{subfigure}
\\ \vspace{-0.15in}
\caption{\fontsize{11pt}{9pt}\selectfont { {\em Numerically-computed
      one and two-spike steady-state solutions of (\ref{ss1}) with
      $d_1=\chi=1$, $\bar u=2,$ $d_2=0.0005$ using FlexPDE7
      \cite{flex2021}.}  The solid red curves are the cellular density
    $u$, while the dotted blue curves are the chemical concentration
    $v$. Observe that $u$ and $v$ increase in the outer region.}}
\label{equilibriumpic}
\vspace{-0.0in}
\end{figure}

  For a one-spike steady-state, we now compare our asymptotic results
  with corresponding full numerical results computed using FlexPDE7
  \cite{flex2021}.  For $\mu=0.25$, in Table \ref{tab1} we compare
  asymptotic and numerical results for the maximum values of $u$ and
  $v$ for both $\bar u=2$ and for $\bar{u}=3$. A similar comparison,
  but for the boundary values of $u$ and $v$ are shown in Table
  \ref{tab2}. We observe that the asymptotic results in
  (\ref{Nspikebutquasi}) more closely approximate the numerical result
  when $\bar{u}=2$ than when $\bar{u}=3$. This improved agreement when
  $\bar{u}=2$ is due to the fact that the $\sqrt{1-2 w_o\bar\chi}$
  term in \eqref{wouterproblembefore} vanishes only when
  $\bar{u}={2/\bar\chi}$, and so the error does not include any
  ${\mathcal O}(\vert\log\epsilon\vert^{-1} )$ correction term as it
  does for the case when $\bar{u}=3$. In Figure \ref{equilibriumpic}
  we plot the numerically-computed one-spike and two-spike
  steady-state solutions computed using FlexPDE7 \cite{flex2021}. We
  observe that the half-profiles of $u$ and $v$ are not monotone
  decreasing and so their spatial behavior is rather different than
  for spike patterns of the classical KS model
  \cite{wang2013spiky} without the logistic growth term.

\subsection{Formulation of the Linear Stability Problem}
To formulate the linear stability problem for the steady-state
solution, denoted by $(u_e,v_e)$, we introduce the following
time-dependent perturbation $(u,v)$ to (\ref{timedependent}):
\begin{align}\label{perturbation}
u(x,t)=u_e(x)+e^{\lambda t}\phi(x)\,,\qquad v(x,t)=v_e(x)+e^{\lambda t}\psi(x)\,,
\end{align}
where $\phi\ll 1$ and $\psi\ll 1.$ Upon substituting
(\ref{perturbation}) into (\ref{timedependent}) and linearizing, we
obtain the spectral problem
\begin{subequations}\label{lep}
\begin{align}
           {\frac{\tau\lambda}{d_1}}\phi&=\phi_{xx}-\bar\chi(u_e\psi_x)_x
           -\bar\chi(v_{ex}\phi)_x+
      \frac{\mu}{d_1}(\bar u-2u_e)\phi\,, \quad -1<x<1\,; \quad
         \phi_x(\pm 1)=0\,,\label{lep_a} \\
           \lambda\psi&=\epsilon^2\psi_{xx}-\psi+\phi\,, \quad -1<x<1\,;
           \quad \psi_x(\pm 1)=0\,.\label{lep_b}
\end{align}
\end{subequations}
It is well known that linearized eigenvalue problems arising from the
analysis of localized spike patterns of RD systems have two classes of
eigenvalues (cf.~\cite{iron2001stability}).  The first type is
referred to as the large eigenvalues since they are bounded away from
zero as $\epsilon\rightarrow 0.$ The second type are the small
eigenvalues of order $o(1)$ as $\epsilon\rightarrow 0.$

In \S \ref{sec3} and \S \ref{sec4} we will analyze the large and small
eigenvalues for (\ref{lep}), where the cellular diffusion rate $d_1$
is the main bifurcation parameter. Recall that $d_1\in \mathcal{T}_e$
where the admissible set is defined in (\ref{d1:admiss}). Our main
goal is to determine critical thresholds for $d_1\in \mathcal{T}_e$,
depending on $N$, that will provide the range of $d_1$ for which all
large and small eigenvalues satisfy $\mbox{Re}(\lambda)<0$. On this
range, $N$-spike steady-states are linearly stable as $\epsilon\to
0$. Oscillatory instabilities in the amplitude of a one-spike
steady-state are also shown to be possible as $\tau$ is increased from
a Hopf bifurcation of the large eigenvalues.
 
\section{Analysis of the Large Eigenvalues}\label{sec3}
This section is devoted to the study of large eigenvalues for an
$N$-spike steady-state. These eigenvalues are bounded away from zero
as $\epsilon\rightarrow0.$ To begin, we introduce local variables
defined in the $j^{\mbox{th}}$ inner region by
\begin{align}\label{largenewvariable}
  y=\epsilon^{-1}(x-x_j)\,, \qquad \Phi_j(y):=\phi(x_j+\epsilon  y)\,,
  \qquad \Psi_j(y):=\psi(x_j+\epsilon  y)\,,
\end{align}
and we expand
\begin{align}\label{largeexpansionphipsi}
  \Phi_j(y)=\Phi_{0j}(y)+\epsilon^2\Phi_{1j}(y)+\ldots\, \quad
  \Psi_j(y)=\Psi_{0j}(y)+\epsilon^2\Psi_{1j}(y)+\ldots\,, \quad
  \lambda\sim\lambda_0 \,.
\end{align}
Since the spike profile $(U_j,V_j)$ for the steady-state is the same
for each $j$, as similar to (\ref{expansion}) we expand
\begin{align}\label{largeexpansionUV}
  U_j(y)=U_{0}(y)+\epsilon^2U_{1}(y)+\ldots\, \quad V_j(y)=V_{0}(y)+
  \epsilon^2V_{1}(y)+\ldots \,.
\end{align}
Upon substituting (\ref{largenewvariable})--(\ref{largeexpansionUV})
into (\ref{lep}), we obtain the following leading order problem on
$-\infty<y<\infty$:
\begin{subequations}\label{leadingphilarge}
\begin{align}
         0&=\Phi_{0j}^{\prime\prime}-\bar\chi(U_0\Psi^{\prime}_{0j})^{\prime}-
  \bar\chi(V_0^{\prime}\Phi_{0j})^{\prime}\,; \qquad
\Phi_{0j}^{\prime}(0) =0 \,, \label{leadingphilarge_a}\\
  \lambda_0\Psi_{0j}&=\Psi_{0j}^{\prime\prime}-\Psi_{0j}+\Phi_{0j}\,;
                    \qquad  \Psi_{0j}^{\prime}(0)=0\,.
\end{align}
\end{subequations}
Recalling that $U_{0}^{\prime}=\bar \chi U_{0} V_{0}^{\prime}$ from
the core problem (\ref{leading}), it is convenient to define
$g_{0j}$ by
\begin{align}\label{sec3g0}
g_{0j}:=\frac{\Phi_{0j}}{U_0}-\bar\chi \, \Psi_{0j}\,.
\end{align}
In terms of $g_{0j}$, the two problems in (\ref{leadingphilarge}) are
transformed on $-\infty<y<\infty$ to
\begin{align}\label{leadingglarge}
  \left(U_0g_{0j}^{\prime}\right)^{\prime} =0 \,, \quad
  g_{0j}^{\prime}(0)=0\,; \qquad
     \lambda_0\Psi_{0j}=\Psi_{0j}^{\prime\prime}-\Psi_{0j}+\bar\chi U_0\Psi_{0j}+
         U_0g_{0j}\,, \quad \Psi_{0j}^{\prime}(0)=0\,.
\end{align}
Imposing that $g_{0j}$ is bounded as $|y|\to\infty$, we obtain from
the first equation of (\ref{leadingglarge}) that
$g_{0j}=\tilde C_j$, where $\tilde C_j$ is to be determined. Then,
the second equation in (\ref{leadingglarge}) becomes
\begin{align}\label{jthlargeep}
  \lambda_0\Psi_{0j}=\Psi_{0j}^{\prime\prime}-\Psi_{0j}+\bar \chi U_0\Psi_{0j}+
  \tilde C_j U_0\,, \quad -\infty<y<\infty\,; \qquad
  \Psi_{0j}^{\prime}(0)=0 \,.
\end{align}
Before formulating the outer problem, we must
determine the far-field behavior of the inner solution. In the outer region,
we obtain from (\ref{lep}) that, for $\epsilon\ll 1,$
$\phi\sim (\lambda_0+1)\psi$. As a result, we must have
$\Phi_{0j}\sim (\lambda_0+1)\Psi_{0j}$ as $y\rightarrow \pm\infty$.  By using
this relation, together with $g_{0j}=\tilde C_j$ and $U_0\sim s_0$ as $|y|
\to\infty$, (\ref{sec3g0}) yields that
$$\Phi_{0j}=\tilde C_jU_0+\bar\chi U_0\Psi_{0j}\sim \tilde C_j s_0+\bar\chi
s_0\frac{\Phi_{0j}}{\lambda_0+1}\,,$$
as $|y|\to\infty$. Since $s_0\ll 1$, this expression provides the
leading order far-field behavior
\begin{align}\label{largeinnerfarfieldbehavior}
\Phi_{0j}\sim \tilde C_j s_j \,, \qquad \mbox{as} \quad |y|\to \infty\,.
\end{align}

Next, we construct the outer solution. Since
$u_{e}=v_{e}={\mathcal O}(s_0)\ll 1$ in the outer region, 
(\ref{lep_a}) yields that $\phi\sim\phi_o$, where
\begin{equation}\label{phio:nj}
  \frac{d_1}{\mu} \phi_{oxx} + \hat{u} \phi_o =0 \,, \quad -1<x<1 \,, \quad
  x\neq x_{j}^{0}\,, \qquad \mbox{where} \quad
  \hat{u}= \bar u - \frac{\tau\lambda_0}{\mu} \,.
\end{equation}
From (\ref{largeinnerfarfieldbehavior}), one matching condition is
$\phi_0(x_{j}^{0})=\tilde{C}_js_0$, while the other is obtained by deriving
the appropriate
jump condition for $\lbrack \phi_{ox}\rbrack_{j}:=\phi_{ox}(x_j^{0+})-
\phi_{ox}(x_j^{0-})$. To derive this jump condition we write (\ref{lep_a})
as
\begin{equation}\label{hatu}
  \frac{d_1}{\mu} \phi_{xx} + \hat{u} \phi = 2 u_e\phi + \bar\chi(u_e\psi_x)_x
  +\bar\chi(v_{ex}\phi)_x \,.
\end{equation}
We integrate (\ref{hatu}) over an intermediate scale
$x_{j}^{0}-\sigma<x<x_{j}^{0}+\sigma$ where
$\epsilon\ll\sigma\ll 1$ and we pass to the limit $\sigma\to 0$, but
with ${\sigma/\epsilon}\to\infty$. In terms of the inner coordinate
$y$, where $u_e\sim U_0$ and $\phi\sim \Phi_{0j}$, and upon using
the facts that $u_e=v_e={\mathcal O}(s_0)\ll 1$ at
$x=x_{j}^{0}\pm\sigma$, we obtain that the jump condition for the outer
solution is
\begin{equation*}
  \frac{d_1}{\mu}  \lbrack \phi_{ox}\rbrack_j \sim
  2\epsilon \int_{-\infty}^{\infty} U_0\Phi_{0j}\,
  dy \,.
\end{equation*}
Then, by using $\Phi_{0j}\sim U_0\tilde{C}_j + \bar{\chi} U_0 \Psi_{0j}$,
as derived from (\ref{sec3g0}) with $g_{0j}=\tilde{C}_j$, we conclude
that
\begin{equation}\label{f:jump}
  \frac{d_1}{\mu}  \lbrack \phi_{ox}\rbrack_j \sim
    2\epsilon \int_{-\infty}^{\infty}\left(U_0^2 \tilde{C}_j
      + \bar{\chi} \Psi_{0j} U_0^2 \right)\, dy\,.
\end{equation}
In this way, we obtain the following multi-point boundary-value
problem (BVP) for the outer solution $\phi_o$:
\begin{subequations}\label{phio:prob}
\begin{align}
  \frac{d_1}{\mu} \phi_{oxx} + \hat{u} \phi_o &=0\,, \quad -1<x<1 \,,
  \,\,\, x\neq x_{j}^{0}\,, \,\, j=1,\ldots,N \,; \qquad \phi_{ox}(\pm 1) =0\,, \\
  \frac{d_1}{\mu} \lbrack \phi_{ox}\rbrack_j &=
  2\epsilon\int_{-\infty}^{\infty}\left(U_0^2 \tilde{C}_j
 + \bar{\chi} \Psi_{0j} U_0^2 \right)\, dy \,,
    \quad  \phi_0(x_{j}^{0})=\tilde{C_j} s_0 \,, \quad j=1,\ldots,N\,.
\end{align}
\end{subequations}

To solve (\ref{phio:prob}), we introduce an
eigenvalue-dependent Green's function $G_\lambda(x;x_k)$ defined by
\begin{align}\label{lam:greenequation}
  \frac{d_1}{\mu}G_{\lambda xx}+\hat u G_{\lambda} =\delta(x-x_k)\,,\quad
  -1<x<1\,; \qquad
G_{\lambda x}(\pm 1;x_k)=0\,,
\end{align}
which exists provided that $d_1\neq {4\mu \hat{u}/(m^2\pi^2)}$ for
$m=1,2,\ldots$. When these constraints are satisfied, the solution to
(\ref{phio:prob}) is represented as the superposition
\begin{align}\label{largeouter}
  \phi_o=  2\epsilon \sum_{k=1}^N \int_{-\infty}^\infty
  \left(\bar\chi  U^2_0
  \Psi_{0k} + \tilde C_k U^2_0 \right)\, dy \,\, G_{\lambda}(x;x_k^{0})  \,.
\end{align}
By imposing $\phi_o(x_{j}^{0})=s_0\tilde{C_j}$, and recalling from
(\ref{ag}) that $s_0=\epsilon a_g \int_{-\infty}^\infty U_0^2 \, dy$,
we obtain from (\ref{largeouter}) that
\begin{equation}\label{matchinglarge1}
  \tilde C_j=\frac{2}{ a_g\int_{-\infty}^\infty U_0^2 \, dy}
  \sum_{k=1}^N\left(\int_{-\infty}^\infty\bar\chi  U^2_0\Psi_{0k} \, dy\right)
  G_{\lambda}\left(x_j^0;x_k^0\right) +
    \frac{2}{a_g}\sum_{k=1}^N\tilde C_k\, G_{\lambda}\left(x_j^0;x_k^0\right)\,,
\end{equation}
where $a_{g}$ was defined in (\ref{ag}). Then, by letting $I$ be the
$N\times N$ identity matrix, and introducing
\begin{align}\label{nlep:glambda}
\mathcal G_{\lambda}:=\left( \begin{array}{ccc}
 G_{\lambda}\big(x_1^0;x_1^0\big) & \cdots &  G_{\lambda}\big(x_1^0;x_N^0\big)\\
  \vdots & \ddots & \ddots\\
   G_{\lambda}\big(x_N^0;x_1^0\big) &\cdots &  G_{\lambda}\big(x_N^0;x_N^0\big)\\
    \end{array}
  \right)\,,~~~~\boldsymbol{\tilde C}:=\left( \begin{array}{c}
\tilde C_1\\
  \vdots  \\
\tilde C_N\\
    \end{array}
  \right)\,,~~~\boldsymbol{\Psi_{0}}:=\left( \begin{array}{c}
\Psi_{01}\\
  \vdots  \\
\Psi_{0N}\\
    \end{array}
  \right)\,,
\end{align}
we can write the linear algebraic system (\ref{matchinglarge1}) for
$\tilde{C}_j$, with $j=1,\ldots,N$, in matrix form as
\begin{equation}\label{matform}
  \left(\frac{2}{a_{g}}{\mathcal G}_{\lambda} - I\right)\boldsymbol{\tilde C}
  = \frac{2}{a_{g}} {\mathcal G}_{\lambda} \left(
  -\frac{\bar{\chi}}{\int_{-\infty}^{\infty} U_0^2\, dy}
  \int_{-\infty}^{\infty} U_0^2 \boldsymbol{\Psi_0} \,dy \right)\,.
\end{equation}

By combining (\ref{jthlargeep}) with (\ref{matform}), we obtain a
vector nonlocal eigenvalue problem (NLEP) given by
\begin{align}\label{vectorpsi0lep}
  \boldsymbol{ \Psi}_{0}^{\prime\prime}-\boldsymbol{\Psi_0}+
  \bar{\chi} U_0\boldsymbol{\Psi_0}-\bar{\chi}U_0
  \frac{\int_{-\infty}^\infty U_0^2{\mathcal
  B}\boldsymbol{\Psi}_0 \, dy }{\int_{-\infty}^\infty U_0^2 \, dy}
  =\lambda_0\boldsymbol{\Psi}_0\,,
\end{align}
where, to leading order, we have $\boldsymbol{\Psi}_0\rightarrow 0$ as
$|y|\rightarrow \infty$. Here $\mathcal B$ is the $N\times N$
matrix defined by
\begin{align}\label{mathcalB}
  { \mathcal B}:=\frac{2}{a_g}\left(\frac{2}{a_{g}}
  \mathcal G_{\lambda}
  -I\right)^{-1}\mathcal{G}_\lambda \,.
\end{align}

Next, we diagonalize the vector NLEP (\ref{vectorpsi0lep}) by
introducing the orthogonal eigenspace of ${\mathcal B}$ as
\begin{equation}\label{matrixBeigenvalueproblem}
  {\mathcal B} {\bf q}_j=\alpha_j {\bf q}_j \,, \qquad j=1,\ldots,N\,.
\end{equation}
Denoting ${\mathcal Q}$ as the matrix of eigenvectors ${\bf q}_j$ (as
columns), we obtain ${\mathcal B}={\mathcal Q}\,\mbox{diag}(\alpha_1,\ldots,
\alpha_N) {\mathcal Q}^{-1}$. By defining
$\tilde{\boldsymbol{\Psi}}_0={\mathcal Q}^{-1}{\boldsymbol{\Psi}}_0$,
we obtain that (\ref{vectorpsi0lep}) reduces to the following
$N$-scalar NLEPs, where $\alpha$ is any
eigenvalue of ${\mathcal B}$:
\begin{align}\label{NLEP}
 \Psi^{\prime\prime}-\Psi + \bar\chi U_0 \Psi -\alpha \bar{\chi}U_0
  \frac{\int_{-\infty}^\infty U_0^2 \Psi \, dy}{\int_{-\infty}^\infty U_0^2 \, dy}
  =\lambda_0 \Psi \,,  \qquad \Psi\to 0 \,\, \mbox{as} \,\, |y|\to \infty\,.
\end{align}

Since $U_0\gg {\mathcal O}(1)$ in the sub-inner region, we will
transform (\ref{NLEP}) to the $z$-variable. Recall that in the
$j^{\mbox{th}}$ sub-inner region, we have from Proposition \ref{prop1}
that
\begin{align}\label{gm_anal}
  z=v_{\max 0} y \,, \qquad y=\epsilon^{-1}(x-x_j) \,,
  \qquad U_0\sim \frac{1}{2}\bar\chi v_{\max 0}^2\sech^{2}\left(
  \frac{\bar{\chi}z}{2}\right) \,.
\end{align}
By introducing the re-scaled coordinate $\bar{z}:={\bar{\chi} z/2}$,
and defining
\begin{equation}\label{sech2}
  \bar{U_0} := 2\sech^{2}(\bar{z}) \,,
\end{equation}
we readily derive from (\ref{NLEP}) that we must analyze, on
$-\infty<\bar{z}<+\infty$, the approximating NLEP given by
\begin{equation}\label{NLEPbarzvar}
   \Psi_{\bar z\bar z}+ \bar{U_0}\Psi -\alpha \bar{U_0}\frac{\int_{-\infty}^\infty
     {\bar U_0}^2\Psi \, d\bar z }{\int_{-\infty}^\infty {\bar U_0}^2 \,d\bar z}
   =\frac{4}{\bar\chi^2 v_{\max 0}^2}(\lambda_0+1) \Psi \,, \qquad
   \Psi \,\,\, \mbox{bounded as} \,\,\,  |\bar{z}|\rightarrow \infty\,.
\end{equation}

\subsection{Competition Instabilities: $\tau=0$}\label{nlep:compet}

From the NLEP (\ref{NLEPbarzvar}), we now determine the conditions on
$d_1\in{\mathcal T}_e$, $\mu$, $\bar u$ and $N$ such that the
$N$-spike equilibrium is linearly stable with respect to the large
eigenvalues when $\tau=0$.  To do so, we must first determine explicit
formulae for the eigenvalues of the matrix $\mathcal B$ in
(\ref{mathcalB}). Then, by analyzing the NLEP, we must calculate the
critical threshold $\alpha_c>0$ such that in the restricted subset for
which $\lambda_0\not=0$, we can guarantee that when $\alpha<\alpha_c$
the principal eigenvalue of (\ref{NLEPbarzvar}) has a positive real
part, and that when $\alpha>\alpha_c$ it has a negative real part.

One can immediately conclude that when the minimum eigenvalue of
matrix $\mathcal B$, labeled by $\alpha_{min}$, satisfies
$\alpha_{min}>\alpha_c$, the NLEP (\ref{NLEPbarzvar}) with $\tau=0$
has no eigenvalue with a positive real part in the subset for which
$\lambda_0\not=0.$ We will calculate the explicit range of parameter
values $d_1\in {\mathcal T}_e$, $\mu$, $\bar u$ and $N$ to ensure that
the condition $\alpha_{min}>\alpha_c$ holds, which guarantees that the
$N$-spike equilibrium is linearly stable with respect to the large
eigenvalues when $\tau=0$. Our results will be expressed in terms of a
threshold value in the diffusivity $d_1$.

In Appendix \ref{appendixA} we show that when $d_1\in {\mathcal T}_e$,
the eigenvalues $\alpha_j$ of ${\mathcal B}$ when $\tau=0$ are related
to the eigenvalues $\sigma_j$ of the Green's matrix ${\mathcal G}$ by
\begin{equation}\label{eig:alpha_all}
  \alpha_j = \frac{2\sigma_j}{2\sigma_j - \sigma_1}\,, \qquad \mbox{for}
  \quad j=1,\ldots,N \,,
\end{equation}
where $\sigma_j$ is defined in (\ref{app:sigma_j}) upon setting
$\tau=0$. The minimum such eigenvalue is $\alpha_{min}=\alpha_N$.

Next, we focus on the computation of the critical threshold
$\alpha_c.$ In fact, if we entirely follow the method in
\cite{Wei1999single} to study (\ref{NLEPbarzvar}), we readily obtain
that $\alpha_c\sim 1$.  However, the next order term in
$\alpha_c$ is ${\mathcal O}(|\log\epsilon|^{-1})$ since it involves
$v_{\max 0}$. This term is key for obtaining accurate predictions of the
stability threshold when $\tau=0$. To obtain this refined asymptotic
formula of $\alpha_c$, in Appendix \ref{appensec3} we transform the
NLEP into an ODE that can be solved with the use of hypergeometric
functions.  We summarize our rigorous results in  the following
theorem:

\begin{theorem}\label{theorem1}
  Consider the following nonlocal eigenvalue problem (NLEP):
\begin{align}\label{NLEP1}
  \left\{\begin{array}{ll}
   \Psi_{\bar z\bar z}+ \bar{U_0}\Psi -\gamma_0 \bar{U_0}\frac{\int_{-\infty}^\infty
     {\bar U_0}^2\Psi \, d\bar z }{\int_{-\infty}^\infty {\bar U_0}^2 \,d\bar z}
           =\frac{4}{\bar\chi^2 v_{\max 0}^2}(\lambda_0+1) \Psi \,,
            &-\infty<z<+\infty\,,\\
 \Psi  \quad \mbox{bounded as } \quad \vert z\vert\rightarrow \infty\,.
\end{array}
\right.
\end{align}
Here $\gamma_0\geq 0$ and $\bar{U}_0$ is given in (\ref{sech2}). Let
$\lambda_0\not=0$ be the eigenvalue of (\ref{NLEP1}) with the largest
real part.  Then for $v_{\max 0}\gg 1$, we have $\mbox{Re}(\lambda_0)>0$ when
$\gamma_0<\gamma_c:=1-\frac{3}{2\bar\chi v_{\max 0}}$. Alternatively,
we have $\mbox{Re}(\lambda_0)<0$ when $\gamma_0>\gamma_c$. 
\end{theorem}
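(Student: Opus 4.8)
Set $\mu_0:=4/(\bar\chi^2 v_{\max 0}^2)$ and $\nu:=\sqrt{\mu_0}=2/(\bar\chi v_{\max 0})$, and rewrite (\ref{NLEP1}) as the nonlocal eigenvalue problem $L_0\Psi-\gamma_0\bar U_0\,\big(\int_{-\infty}^\infty\bar U_0^2\Psi\,d\bar z\big)\big/\big(\int_{-\infty}^\infty\bar U_0^2\,d\bar z\big)=\sigma\Psi$ on $\R$ with $\Psi$ bounded, where $L_0\Psi:=\Psi_{\bar z\bar z}+\bar U_0\Psi$ and $\sigma:=\mu_0(\lambda_0+1)$. The local operator $L_0$ is the $\ell=1$ P\"oschl--Teller operator: it has the single positive eigenvalue $\sigma=1$ with positive eigenfunction $\sech\bar z$ (indeed $L_0\sech\bar z=\sech\bar z$), its essential spectrum is $(-\infty,0]$, and, crucially, $L_0(1)=\bar U_0$. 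Since $\operatorname{Re}(\lambda_0)>0\iff\operatorname{Re}(\sigma)>\mu_0$, proving the theorem amounts to identifying, with $\tau=0$ fixed and $\gamma_0$ varying, the value $\gamma_c$ at which the NLEP eigenvalue of largest real part crosses $\sigma=\mu_0$.

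The first step is to derive the characteristic equation for the eigenvalues. For $\sigma\notin\operatorname{spec}(L_0)$, any eigenfunction with $\int\bar U_0^2\Psi\,d\bar z\neq0$ must be proportional to $(L_0-\sigma)^{-1}\bar U_0$, and substituting back yields $\gamma_0\,\mathcal C(\sigma)=1$, where $\mathcal C(\sigma):=\big(\int_{-\infty}^\infty\bar U_0^2\,(L_0-\sigma)^{-1}\bar U_0\,d\bar z\big)\big/\big(\int_{-\infty}^\infty\bar U_0^2\,d\bar z\big)$. (An eigenfunction with $\int\bar U_0^2\Psi\,d\bar z=0$ would force $\sigma\in\operatorname{spec}(L_0)$, but at $\sigma=1$ the candidate $\Psi=\sech\bar z$ has $\int\bar U_0^2\sech\bar z\,d\bar z\neq0$, so no such eigenvalue arises for $\operatorname{Re}(\sigma)>0$.) Following the method of \cite{Wei1999single} for $\tau=0$, one shows $\mathcal C$ is real and strictly monotone on $(\mu_0,1)$, with a simple pole at $\sigma=1$ and $\mathcal C(\sigma)\to+\infty$ as $\sigma\to1^-$. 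Hence for each $\gamma_0\in(0,\gamma_c)$ there is exactly one root $\sigma_*(\gamma_0)\in(\mu_0,1)$, decreasing in $\gamma_0$, where $\gamma_c:=1/\mathcal C(\mu_0)$; this $\sigma_*$ is the eigenvalue of largest real part, it passes through $\sigma=\mu_0$ precisely at $\gamma_0=\gamma_c$, and a winding-number argument for $\gamma_0\mathcal C(\sigma)-1$ on $\{\operatorname{Re}\sigma>\mu_0\}$ (exactly as in the $\tau=0$ Gierer--Meinhardt analysis) shows that no eigenvalue has $\operatorname{Re}(\sigma)>\mu_0$ once $\gamma_0>\gamma_c$. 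This establishes $\operatorname{Re}(\lambda_0)>0\iff\gamma_0<\gamma_c$ and reduces the theorem to computing $\gamma_c=1/\mathcal C(\mu_0)$.

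To evaluate $\gamma_c$, observe that $\mathcal C(\mu_0)=\tfrac{3}{16}\int_{-\infty}^\infty\bar U_0^2\,w\,d\bar z$, where $w:=(L_0-\mu_0)^{-1}\bar U_0$ solves $w_{\bar z\bar z}+\bar U_0 w-\mu_0 w=\bar U_0$ with $w$ decaying. The homogeneous equation has the elementary solutions $w_\pm(\bar z)=e^{\mp\nu\bar z}(\tanh\bar z\pm\nu)$, decaying as $\bar z\to\pm\infty$, with constant Wronskian $w_-w_+'-w_-'w_+=-2\nu(1-\nu^2)$; variation of parameters then expresses $w$ through the antiderivatives of $e^{\pm\nu s}\sech^2 s$, which are incomplete-Beta (Gauss hypergeometric) functions---these are the hypergeometric functions referred to in the statement---and in particular $\int_{-\infty}^\infty e^{\nu s}\sech^2 s\,ds=\pi\nu/\sin(\pi\nu/2)$. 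Using $\bar U_0=L_0(1)$ one checks that the leading term of $w$ in the region $\bar z=\mathcal O(1)$ is $w\equiv1$, so $\mathcal C(\mu_0)\to\tfrac{3}{16}\int_{-\infty}^\infty\bar U_0^2\,d\bar z=1$ and $\gamma_c\to1$; collecting the first correction in $\nu\ll1$ gives $\mathcal C(\mu_0)=1+\tfrac34\nu+O(\nu^2)$, hence $\gamma_c=1/\mathcal C(\mu_0)=1-\tfrac34\nu+O(\nu^2)=1-\frac{3}{2\bar\chi v_{\max 0}}+O(v_{\max 0}^{-2})$, as claimed.

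The main obstacle is the refined computation in the previous paragraph. A naive regular expansion of $w$ in powers of $\mu_0$ fails because $L_0^{-1}$ is singular on constants (the point $\sigma=0$ is the edge of the essential spectrum, with non-$L^2$ resonance $\tanh\bar z$), and this resonance is exactly what generates the $\sqrt{\mu_0}$ term; moreover the $\mathcal O(\nu^{-1})$ part of $w$ turns out to be an odd multiple of $\tanh\bar z\,\sech^2\bar z$ that integrates to zero against the even weight $\bar U_0^2$, so both the $\mathcal O(1)$ and the $\mathcal O(\nu)$ contributions to $\mathcal C(\mu_0)$ come from more delicate even parts of $w$, and one must retain the factors $e^{\pm\nu\bar z}$ and the $\nu$-dependence of the Wronskian consistently. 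I would organize this as a matched inner ($\bar z=\mathcal O(1)$, where $w=1+\mathcal O(\nu)$) / outer ($\bar z=\mathcal O(\nu^{-1})$, where $w$ is $\propto e^{-\nu|\bar z|}$) expansion, cross-checked against the exact hypergeometric solution, noting that $\bar U_0^2$ decays like $e^{-4|\bar z|}$ so the outer region is irrelevant to $\mathcal C$ at every algebraic order in $\nu$. A secondary technical point is to verify, for this particular operator $L_0$ (Gierer--Meinhardt-like but not identical, since it lacks a $-\Psi$ term), the hypotheses of the counting lemma of \cite{Wei1999single} used in the second paragraph: that the dominant eigenvalue is real and remains real through the crossing, and that no eigenvalue is emitted from the essential spectrum as $\gamma_0$ increases.
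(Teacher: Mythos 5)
Your computation of the threshold $\gamma_c$ via the explicit Jost solutions $w_\pm=e^{\mp\nu\bar z}(\tanh\bar z\pm\nu)$ and variation of parameters is correct (one finds $w=1+\nu(1-\bar z\tanh\bar z)+O(\nu^2)$ in the inner region, whence $\int_{-\infty}^\infty\sech^4\bar z\,w\,d\bar z=\tfrac43+\nu+O(\nu^2)$, $\mathcal C(\mu_0)=1+\tfrac34\nu+O(\nu^2)$, and $\gamma_c=1-\tfrac34\nu+O(\nu^2)$, matching the theorem), and it is a genuinely different route from the paper's. The paper first multiplies the NLEP by $w^2$ (with $w^2=\bar U_0$), integrates, and uses the ODE identity $(w^2)_{\bar z\bar z}-4w^2+3w^4=0$ to replace the multiplier $\gamma_0$ by $\kappa=\gamma_0(4-\Lambda)/(2+\gamma_0)$ acting on the different nonlocal term $w^2\int w^2\Psi_0/\int w^4$; it then substitutes $\Psi_0=\bar w^{\delta_1}G$ to obtain a hypergeometric ODE and extracts the threshold by expanding ${}_3F_2$ and ${}_4F_3$ series. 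Your route bypasses the hypergeometric machinery by exploiting that the $\ell=1$ P\"oschl--Teller homogeneous equation at energy $-\nu^2$ has elementary solutions, which is cleaner for the small-$\nu$ expansion and makes the role of the zero-energy resonance transparent.

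There is, however, a gap in the dichotomy argument, and it is not the secondary point you suggest. You assert strict monotonicity of $\mathcal C(\sigma)=\bigl(\int\bar U_0^2(L_0-\sigma)^{-1}\bar U_0\bigr)/\int\bar U_0^2$ on $(\mu_0,1)$, but because the test function $\bar U_0^2=w^4$ and the source $\bar U_0=w^2$ differ, $\mathcal C'(\sigma)=\bigl(\int w^4(L_0-\sigma)^{-2}w^2\bigr)/\int w^4$ is not a squared norm and has no automatic sign. The $\kappa$-reformulation in the paper is exactly what repairs this: the resulting characteristic function $\int w^2(L_0-\Lambda)^{-1}w^2/\int w^4$ is a self-adjoint quadratic form in $(L_0-\Lambda)^{-1/2}w^2$ and hence manifestly monotone --- this is the function $\Xi(\upsilon)$ used in the paper's Appendix C to prove the lower bound $\int|\Psi_{0\bar z}|^2-\int w^2|\Psi_0|^2\ge-\bigl|\int w^2\Psi_0\bigr|^2/\int w^2$, and to deduce that $\Lambda$ is real. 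Before asserting monotonicity or running a winding-number count, you should carry out this same reformulation; alternatively, you can adopt the paper's hybrid strategy (quadratic-form bound on the stable side of $\gamma_c$, intermediate-value argument for $\gamma_0>1$ on the unstable side, plus the crossing computation to close the gap between the two bounds), which avoids needing global monotonicity of your $\mathcal C$.
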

\begin{proof}
  The proof of Theorem \ref{theorem1} is given in Appendix
  \ref{appensec3}.
\end{proof}

We observe from Theorem \ref{theorem1} that when $v_{\max 0}$ is
sufficiently large, we have $\gamma_c\sim 1$, However, the correction
term is needed to obtain an improved result. Since the minimum
eigenvalue of ${\mathcal B}$ in (\ref{eig:alpha_all}) occurs when $j=N$,
we use Theorem \ref{theorem1} to conclude for $\tau=0$ and
$d_1\in {\mathcal T}_e$, that $\mbox{Re}(\lambda_0)=0$ when
\begin{align}\label{lambdagndividelambda1}
\frac{2\sigma_N}{2\sigma_{N}-\sigma_{1}}=1-\frac{3}{2\bar\chi v_{\max 0}}\,,
\end{align}
where $\sigma_{1}$ and $\sigma_{N}$ are given in (\ref{app:sigma_j}) when
$\tau=0$. This yields that
\begin{equation}\label{jac:thresh}
  \frac{\sigma_N}{\sigma_1} =
  \frac{ {e/(2f)} + 1 }{{e/(2f)} -\cos\left({\pi/N}\right)} =
  \frac{1}{2} - \frac{\bar{\chi}v_{\max 0}}{3}\,,
\end{equation}
where ${e/(2f)}=-\cos\left({2\theta/N}\right)$ with
$\theta=\sqrt{\frac{\mu\bar u}{d_1}}$ can be calculated from
(\ref{def}). By isolating $\cos\left({2\theta/N}\right)$, we get
\begin{equation*}
  \cos\left(\frac{2\theta}{N} \right) = \frac{ 1-a\cos\left({\pi/N}\right)}
  { a+1} \,, \quad \mbox{where}
  \quad a := \frac{\bar\chi v_{\max 0} }{3}-\frac{1}{2}\,.
\end{equation*}
Upon solving this expression for $d_1$, we can obtain a critical
threshold in terms of $\mu$, $\bar u$, $\bar\chi$ and $N$. In this
way, owing to Theorem \ref{theorem1}, we summarize our results for the
case $\tau=0$ as follows:

\begin{proposition}\label{prop33} Assume that $d_1\in {\mathcal T}_e$ and
  $\tau=0$. Let $\lambda_0\not=0$ be the eigenvalue of
  (\ref{NLEPbarzvar}) with the largest real part when $\tau=0$.  Then,
  for $N=1,2,\ldots$, $\mbox{Re}(\lambda_0)<0$ when
\begin{align}\label{d1d1c}
  d_1< d_{1cN} := \frac{4\mu \bar u}{N^2
  \left(\arccos(\eta_N)\right)^2} \,, \quad \mbox{where} \quad
  \eta_N:=\frac{1-a\cos(\pi/N)}{a+1} \,, \quad
   a:=\frac{\bar\chi v_{\max 0} }{3}-\frac{1}{2}\,.
\end{align}
Here $v_{\max 0}$ is determined by (\ref{singlevmax}).  Alternatively,
when $d_1>d_{1cN}$, we have $\mbox{Re}(\lambda_0)>0$. Since
$d_{1c1}=\infty$ when $N=1$, we conclude that a single interior spike
is always linearly stable with respect to the large eigenvalues for
any $d_1={\mathcal O}(1)$ when $\tau=0$.
\end{proposition}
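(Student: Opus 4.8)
The plan is to deduce Proposition~\ref{prop33} from Theorem~\ref{theorem1} together with the diagonalization of the vector NLEP carried out in (\ref{matrixBeigenvalueproblem})--(\ref{NLEP}) and the explicit spectral data for ${\mathcal B}$ recorded in (\ref{eig:alpha_all}). The first step is to observe that, after the rescaling leading to (\ref{NLEPbarzvar}) and the spectral decomposition of ${\mathcal B}$, linear stability of the $N$-spike equilibrium with respect to the large eigenvalues (restricted to the subset $\lambda_0\neq 0$) is equivalent to requiring that each of the $N$ scalar NLEPs (\ref{NLEP1}) with $\gamma_0=\alpha_j$, $j=1,\ldots,N$, has its principal eigenvalue in the open left half-plane. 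By Theorem~\ref{theorem1} this holds iff $\alpha_j>\gamma_c:=1-{3/(2\bar\chi v_{\max 0})}$ for every $j$, i.e.\ iff $\alpha_{min}>\gamma_c$. Since $\tau=0$ forces $\hat u=\bar u$ in (\ref{phio:nj}), the matrix ${\mathcal G}_\lambda={\mathcal G}$ is independent of $\lambda_0$, so the $\alpha_j$ are genuine constants and, by (\ref{eig:alpha_all}), $\alpha_j=2\sigma_j/(2\sigma_j-\sigma_1)$ with $\alpha_{min}=\alpha_N$. The neutral-stability condition is therefore $\alpha_N=\gamma_c$, which is (\ref{lambdagndividelambda1}).

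The second step is purely algebraic. For equally-spaced spikes the symmetric Green's matrix ${\mathcal G}$ has a tridiagonal inverse with constant diagonal entry $e$ and constant off-diagonal entry $f$ (as in (\ref{def})), so its eigenvalues are $\sigma_j=(e+2f\cos((j-1)\pi/N))^{-1}$; in particular $\sigma_1=(e+2f)^{-1}$, $\sigma_N=(e-2f\cos(\pi/N))^{-1}$, and hence $\sigma_N/\sigma_1=(e/(2f)+1)/(e/(2f)-\cos(\pi/N))$, which is (\ref{jac:thresh}). Inserting the identity $e/(2f)=-\cos(2\theta/N)$ with $\theta=\sqrt{\mu\bar u/d_1}$, read off from the explicit Helmholtz Green's function, the neutral condition $\alpha_N=\gamma_c$ becomes
\begin{equation*}
  \frac{1-\cos(2\theta/N)}{-\cos(2\theta/N)-\cos(\pi/N)}=\frac{1}{2}-\frac{\bar\chi v_{\max 0}}{3}=-a \,,
\end{equation*}
which, solved for $\cos(2\theta/N)$, gives $\cos(2\theta/N)=\eta_N$ with $\eta_N$ and $a$ as in (\ref{d1d1c}). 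Since $d_1=\mu\bar u/\theta^2$, this is exactly $d_1=d_{1cN}=4\mu\bar u/(N^2(\arccos\eta_N)^2)$.

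To settle which side of $d_{1cN}$ is stable I would track $\alpha_N$ as a function of $d_1$ on ${\mathcal T}_e$. Writing $r:=\sigma_N/\sigma_1$ and $c:=\cos(2\theta/N)$, one has $dr/dc=(1+\cos(\pi/N))/(-c-\cos(\pi/N))^2>0$, while $c$ is increasing in $d_1$ (as $\theta$ is decreasing and $2\theta/N\in(0,\pi)$) and $d\alpha_N/dr=-2/(2r-1)^2<0$; moreover $\alpha_N=1+(2r-1)^{-1}$ extends continuously through the value $1$ at the pole $c=-\cos(\pi/N)$, i.e.\ at $d_1=d_{1T(N-1)}$ where ${\mathcal G}$ degenerates. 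Hence $\alpha_N$ is a continuous, strictly decreasing function of $d_1$ on $(d_{1pN},\infty)$ that starts slightly above $1$ at $d_{1pN}$ and decreases to $0$ as $d_1\to\infty$, so it meets $\gamma_c\in(0,1)$ at the single value $d_1=d_{1cN}$, with $\alpha_N>\gamma_c$ for $d_1<d_{1cN}$ and $\alpha_N<\gamma_c$ for $d_1>d_{1cN}$; by Theorem~\ref{theorem1} this yields $\mbox{Re}(\lambda_0)<0$ for $d_1\in{\mathcal T}_e$ with $d_1<d_{1cN}$ and $\mbox{Re}(\lambda_0)>0$ for $d_1>d_{1cN}$. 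For $N=1$ the matrix ${\mathcal G}$ is scalar, so $\alpha_{min}=\alpha_1=2>1>\gamma_c$ for all $d_1={\mathcal O}(1)$; equivalently $\eta_1=1$ and $d_{1c1}=\infty$, i.e.\ a single interior spike is always stable to the large eigenvalues when $\tau=0$. The substantive difficulty lies not in this reduction but in Theorem~\ref{theorem1} itself — in particular the ${\mathcal O}(|\log\epsilon|^{-1})$ correction in $\gamma_c$, whose derivation via hypergeometric functions is carried out in Appendix~\ref{appensec3} — together with the bookkeeping of the signs and poles of $r(d_1)=\sigma_N/\sigma_1$ across the resonant diffusivities $d_{1Tm}$, which is what pins down the direction of the zero-eigenvalue crossing.
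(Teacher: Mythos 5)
Your proof is correct and follows essentially the same route as the paper: reduce via Theorem~\ref{theorem1} to the condition $\alpha_{\min}=\alpha_N>\gamma_c$, express $\alpha_N=2\sigma_N/(2\sigma_N-\sigma_1)$ through the eigenvalues of the Green's matrix, and solve the neutral equation $\sigma_N/\sigma_1=-a$ for $\cos(2\theta/N)=\eta_N$. Your closing monotonicity argument for $\alpha_N(d_1)$ — correctly noting that $\alpha_N$ extends continuously through the value $1$ at $d_1=d_{1T(N-1)}$ where the ratio $r=\sigma_N/\sigma_1$ has a pole — supplies a detail the paper leaves implicit and makes the direction of the zero-crossing explicit rather than asserted.
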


Proposition \ref{prop33} provides the stability criterion for an
$N$-spike equilibrium with respect to the large eigenvalues when
$\tau=0$. To relate $d_{1cN}$ to the thresholds $d_{1pN}$ and
$d_{1Tm}$ of the admissible set ${\mathcal T}_e$, as defined in
(\ref{d1:admiss}), we observe from (\ref{d1d1c}) that since
$v_{\max 0}\gg 1$, we have $\eta_N>0$ for $N=2$, and $\eta_N<0$ for
$N\geq 3$. Therefore, $0<\arccos(\eta_2)<{\pi/2}$, while
${\pi/2}<\arccos(\eta_N)<\pi$ for any $N\geq 3$. As a result, for
$\epsilon\to 0$, we conclude that
\begin{align}\label{d1c:ineq}
  d_{1p2}<d_{1T1} <d_{1c2} \,, \quad
  \mbox{for} \,\, N=2 \,; \qquad
  d_{1pN}<d_{1cN}<\ d_{1Tm} \,, \quad 
 \mbox{for} \,\, N\geq 3  \,\, \mbox{and} \,\,  m\leq {N/2} \,.
\end{align}

However, since $v_{\max 0}$ depends weakly on $d_1$, the
threshold $d_{1cN}$ in (\ref{d1d1c}) is a weakly nonlinear implicit
expression that must be solved numerically.  To illustrate our
results, we chose $d_2=0.0004=\epsilon^2$, $\bar u=2$, $\mu=1$ and
$\bar\chi=1,$ and we calculate the thresholds $d_{1cN}$ for $N=2,3,4$
as
\begin{align}\label{d1234num_a}
   d_{1c2}\approx  2.36\,\,\, (N=2);
  \qquad d_{1c3}\approx 0.74\,\,\, (N=3); \qquad d_{1c4}\approx 0.39\,\,\,
  (N=4)\,.
\end{align}
When $N\gg 1,$ $d_{1cN}$ has the limiting behavior
$d_{1cN}\sim {4\mu \bar u
  N^{-2}/\left[\arccos(\eta_{\infty})\right]^2}$ where
$\eta_{\infty}:={(1-a)/(1+a)}$.  This limiting result is valid only
for $N\ll {1/\epsilon}$, owing to the fact that steady-state analysis
in \S \ref{sec2} requires that ${d_1/\epsilon^2}\gg 1.$

In summary, our analysis has shown that a sufficiently large cellular
diffusion rate $d_1$ will trigger a competition instability for an
$N$-spike steady-state solution when $\tau=0$, To partially confirm
our theory, in Figure \ref{figurecompetition} we show full numerical
results computed from (\ref{timedependent}) showing a competition
instability for a two-spike quasi steady-state solution as $d_1$
slowly increases in time. This initial instability is found to lead to
a nonlinear process that annihilates one of the two spikes. This
observation suggests that competition instabilities for the KS model
(\ref{timedependent}) are in fact subcritical, as is well-known for
the 1D Gierer-Meinhardt RD model \cite{paquin}.

\begin{figure}[h!]
\centering
\begin{subfigure}[t]{0.33\textwidth}
    \includegraphics[width=1.0\linewidth,height=7.0cm]{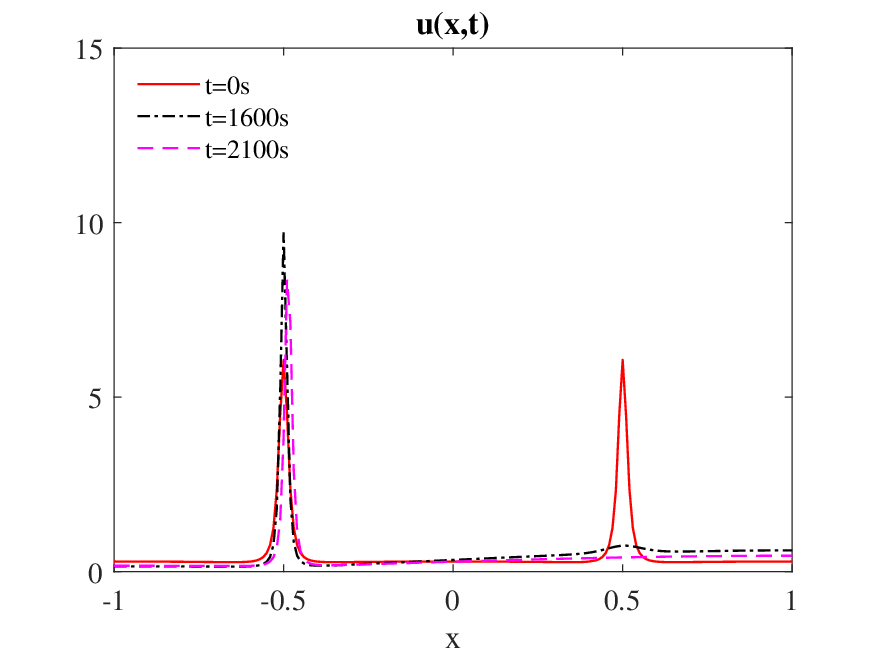}
     \caption{dynamics of $u$}
     \label{subfigcompetition1}
\end{subfigure}\hspace{-0.2in}
\begin{subfigure}[t]{0.33\textwidth}
  \includegraphics[width=1.0\linewidth,height=7.0cm]{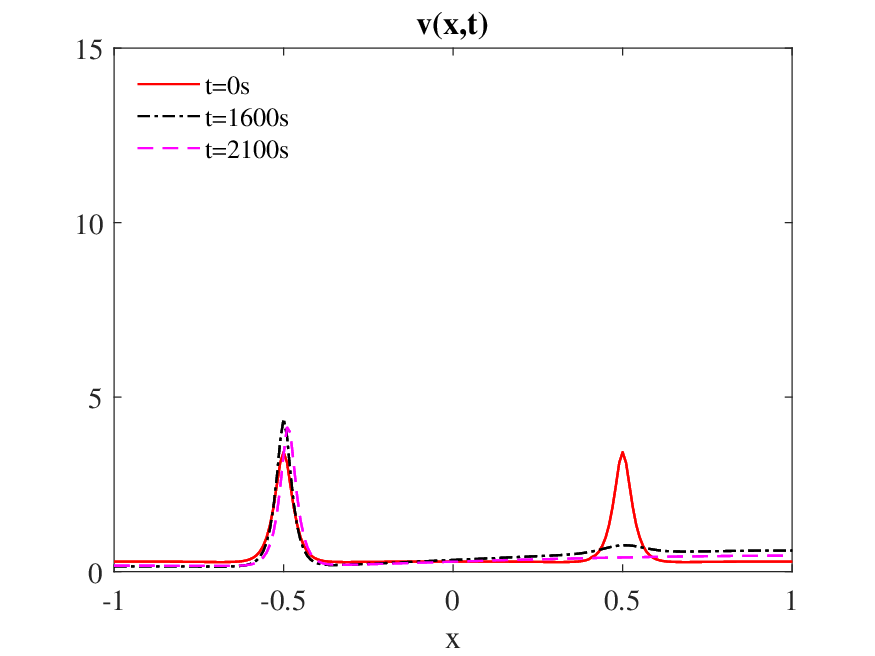}
    \caption{dynamics of $v$}
    \label{subfigcompetition2}
\end{subfigure}
\hspace{-0.2in}
\begin{subfigure}[t]{0.33\textwidth}
  \includegraphics[width=1.0\linewidth,height=7.0cm]{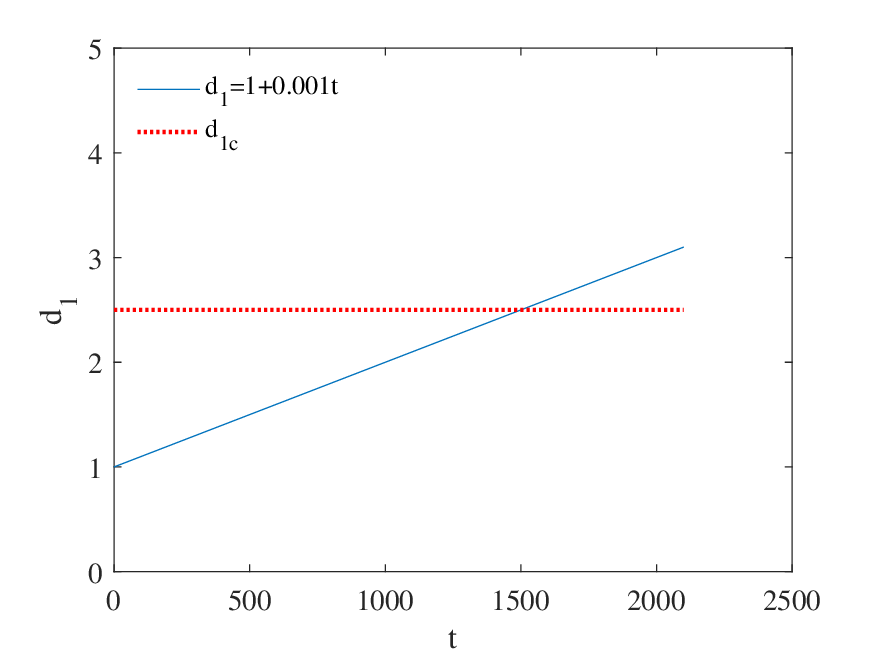}
    \caption{$d_1$ versus $t$}
    \label{subfigcompetition3}
\end{subfigure}
\caption{\fontsize{11pt}{9pt}\selectfont { {\em Full PDE simulations
      of (\ref{timedependent}) using FlexPDE7 \cite{flex2021}
      illustrating a competition instability of a two-spike
      steady-state when $d_1$ is increased slowly in time $t$.  Left
      and Middle: snapshots of $(u,v)$ at three times, showing the
      collapse of a spike, with $d_1=\chi=1$, $\bar u=2$, $d_2=0.0004$
      and $\mu=1;$ Right: the cellular diffusion $d_1$ versus time.}}
  In subfigure (c), the dotted line $d_{1c2}$ represents the stability
  threshold of large eigenvalues computed numerically and the solid
  line is the slow increasing ramp for $d_1$ versus $t$.  Observe that
  $d_{1c2}\approx 2.5$ agrees rather well with the analytical results
  in (\ref{d1234num_a}) and (\ref{d1234jnum}).}
\label{figurecompetition}
\vspace{-0.0in}
\end{figure}

\subsubsection{Invertibility of the Jacobian Matrix for $s_j$}\label{sec:jac_non}

We now provide an alternative approach to estimate the competition
instability threshold when $\tau=0$.  We will show that this threshold
closely approximates a bifurcation point associated with the
linearization of the coupled nonlinear algebraic system
(\ref{quasisj}) that was derived in our analysis of quasi steady-state
patterns.

We begin by writing (\ref{quasisj}) in the vector form
$\boldsymbol{F}(s_1,\ldots,s_N)=0$ with
$\boldsymbol{F}=(F_1,\ldots,F_N)^T$. By differentiating $F_i$ with
respect to $s_j$ we obtain, in terms of the Kronecker symbol
$\delta_{ij}$, that
\begin{align}\label{diffF}
  \frac{\partial F_i}{\partial s_j} (s_1,\ldots,s_N)=\delta_{ij}-
  2\epsilon\bar\chi v_{\max j}^2v_{\max j}^{\prime}G(x_i;x_j) \,,
\end{align}
where from (\ref{diffvmaxj_f}) of Appendix \ref{appendix:vderiv} we have
that
\begin{align}\label{diff_vmaxj}
  v_{\max j}^{\prime}:=\frac{dv_{\max j}}{d s_j} \sim -\frac{\zeta_{\max j}}
  {\bar{\chi} s_j}
  \,, \qquad \zeta_{\max j}:= \left(1 - \frac{2}{\bar{\chi} v_{\max j}}\right)^{-1}
  \,.
\end{align}

We now evaluate the Jacobian matrix
$\mathcal J=\Big(\frac{\partial F_i}{\partial s_j}\Big)_{N\times N}$
at the equilibrium solution where $x_j=x_j^0$, $s_j=s_0$,
$v_{\max j}= v_{\max 0}$, and
$\zeta_{\max j}=\zeta_0= \left(1 - {2/(\bar{\chi} v_{\max
      0})}\right)^{-1}$ for $j=1,\ldots,N$ with $x_j^0$ and $s_0$
defined by \eqref{locations} and (\ref{ag}). We seek to determine the
largest value of $d_1$ in the admissible set ${\mathcal T}_{e}$ of
(\ref{d1:admiss}) where the Jacobian matrix
is not invertible. Upon substituting (\ref{diff_vmaxj}) into
(\ref{diffF}), and evaluating the resulting expression at the
equilibrium solution, where we use
$s_0={2\epsilon \bar{\chi} a_g v_{\max 0}^3/3}$ from (\ref{ag}), we
obtain that
\begin{align}\label{jac:balance}
  \frac{\partial F_i}{\partial s_j}(s_1,\ldots,s_N)
  \Big|_{s_1=\cdots=s_N=s_0}\sim \delta_{ij}-\left(
  \frac{3}{ 2-\bar{\chi} v_{\max 0}}
  \right) \frac{G\big(x_i^0;x_j^0\big)}{a_g} \,.
\end{align}
In this way, the Jacobian matrix $\mathcal J$ at the equilibrium
solution is given for $\epsilon\to 0$ by
\begin{equation}\label{jacobian3}
   {\mathcal J} \sim I - \left(
  \frac{3}{ 2-\bar{\chi} v_{\max 0}}
\right) \frac{{\mathcal G}}{a_g}\,.
\end{equation}
Here ${\mathcal G}$ is the Green's matrix
$\Big(G\big(x_i^0;x_j^0\big)\Big)_{N\times N}$ for $\tau=0$, which is
evaluated at the equilibrium spike locations.

When $d_1\in {\mathcal T}_e$, it follows from (\ref{jacobian3}) that
the eigenvalues $\lambda_{\mathcal J,j}$ of the Jacobian
${\mathcal J}$ are related to the eigenvalues $\sigma_j$ of the
Green's matrix ${\mathcal G}$, obtained by setting $\tau=0$ in
(\ref{app:sigma_j}) of Appendix \ref{appendixA}, by
\begin{align}\label{lambdamathcalj}
  \lambda_{\mathcal{J},j}=1-\frac{3}{(2-\bar\chi v_{\max 0})}
  \frac{\sigma_{j}}{\sigma_{1}}\,,
\end{align}
where we used $\sigma_1=a_g$. The Jacobian matrix is singular when
$\lambda_{\mathcal J, j}=0$ in (\ref{lambdamathcalj}), which yields the
condition
\begin{equation}\label{njac:jthres}
\frac{\sigma_j}{\sigma_1} \sim   \frac{2}{3} - \frac{\bar\chi v_{\max 0}}{3} \,.
\end{equation}
The largest value of $d_1$ where the Jacobian is singular is obtained by
setting $j=N$. By using (\ref{jac:thresh}) this yields
\begin{equation}\label{njac:thresh}
  \frac{\sigma_N}{\sigma_1} =
  \frac{ \cos\left({2\theta/N}\right)-1 }
  {\cos\left({2\theta/N}\right)+\cos\left({\pi/N}\right)} \sim
  \frac{2}{3} - \frac{\bar\chi v_{\max 0}}{3} \,,
\end{equation}
where $\theta=\sqrt{\mu\bar{u}/d_1}$. Upon solving this expression for
$d_1$, we obtain the following critical threshold for $d_1$:
\begin{align}\label{d1d1cj}
  d_{1cN}^{\star}:=\frac{4\mu \bar u}{N^2
  \Big[\arccos\Big(\frac{1-a_1\cos\left({\pi/N}\right)}
  {a_1+1}\Big)\Big]^2}\,, \quad N=1,2,\ldots, \quad \mbox{where}
  \quad a_1:= \frac{\bar\chi v_{\max 0}}{3} -\frac{2}{3} \,.
\end{align}

We remark that the leading order term for $a_1$ given in
(\ref{d1d1cj}) is ${\bar\chi v_{\max 0}/3}$, which agrees precisely 
with the leading term of $a$ defined in (\ref{d1d1c}), as derived by
analyzing the zero-eigenvalue crossing condition of the NLEP. This
observation partially confirms our asymptotic results given in
Proposition \ref{prop33}. For the parameter values $d_2=0.0004,$
$\bar u=2,$ $\mu=1$ and $\bar\chi=1$, we use (\ref{d1d1cj}) to
calculate $d_{1c1}^{\star}=\infty$ and
\begin{align}\label{d1234jnum}
  d_{1c2}^{\star} \approx  2.91\,\,\, (N=2);
  \qquad d_{1c3}^{\star} \approx 0.97\,\,\, (N=3); \qquad
  d_{1c4}^{\star} \approx 0.54\,\,\,  (N=4)\,.
\end{align}

\subsection{Hopf Bifurcations: $\tau\not=0$}  \label{nlep:hopf_new}

In this subsection we focus on the possibility of an oscillatory
instability in the amplitude of a single steady-state spike for
(\ref{timedependent}) on the range $d_1\in {\mathcal T}_e$ where
$\tau\not=0$. In particular, for the linearization of a one-spike
steady-state solution we will show that there can be a Hopf
bifurcation leading to an oscillatory instability in the spike
amplitude. More specifically, by analyzing (\ref{NLEPbarzvar}) we will
compute the threshold $\tau=\tau_c>0$ such that the principal
eigenvalue of (\ref{NLEPbarzvar}) has the form $\lambda_0=i\lambda_H$
where $i:=\sqrt{-1}$ and $\lambda_H>0$ is real.
 
 As shown in (\ref{nlepequiv}) of Appendix \ref{appensec3}, if we
 define $w=\sqrt{\bar U_0}$ we can transform (\ref{NLEPbarzvar}) to
\begin{align}\label{secthopf-nlepequiv}
  \Psi_{0\bar z\bar z}+w^2\Psi_0-\kappa \frac{\int_{-\infty}^\infty
  w^2\Psi_0\, d\bar z}{\int_{-\infty}^\infty w^4\, d\bar z}w^2=
  \Lambda \Psi_0\,; \qquad \kappa:=\frac{\alpha(4-\Lambda)}{2+\alpha}\,,
  \qquad \Lambda:=4\frac{(\lambda_0+1)}
  {\bar\chi^2v_{\max 0}^2}\,.
\end{align}
By using the results in Appendix \ref{appendixA} for $\alpha$, we
obtain that the NLEP multiplier $\kappa$ is
\begin{align*}
  \kappa=4\left(1-\frac{\Lambda}{4}\right)\left(3-\sqrt{1-\frac{\tau\lambda_0}
  {\mu\bar u}}\frac{\tan\Big(\theta\sqrt{1-\frac{\tau\lambda_0}{\mu \bar u}}
  \Big)}{\tan\theta}\right)^{-1}\,, \qquad \mbox{where} \qquad
  \theta = \sqrt{\frac{\mu\bar u}{d_1}} \,,
\end{align*}
and where we have taken the principal branch of
$\sqrt{1-\frac{\tau\lambda_0}{\mu\bar u}}$.  Next, we transform
\eqref{secthopf-nlepequiv} to an algebraic equation in terms of
hypergeometric functions.  By using (\ref{thresholdeqappen}) of
Appendix \ref{appensec3}, we choose $\delta_1={\sqrt{\Lambda}/2}$ to
get
\begin{align}\label{secthopf-algebraic1}
  \frac{4}{\kappa} =
  &(1-\delta_1^2)^{-1} {}_{4}F_3(1,\frac{1}{2},2,2;2-\delta_1,2+\delta_1,
    \frac{5}{2};1)\nonumber\\
  &\quad + \frac{A}{3}\left(\frac{3}{2}\right)^{1+\delta_1}
    \frac{\Gamma(1+\delta_1)\Gamma
    \big(\frac{1}{2}\big)}{\Gamma(\frac{3}{2}+\delta_1)}{}_{3}
    F_2(1+\delta_1,\delta_1-\frac{1}{2},1+\delta_1;2\delta_1+1,\frac{3}{2}
    +\delta_1;1)\,,
\end{align}
where $\sqrt{\Lambda}$ is taken as the principal branch.  In terms of
$\tau=\tau_{c}$ and $\lambda_0=i\lambda_H$,
(\ref{secthopf-algebraic1}) is a single complex algebraic equation
that can be separated into real and imaginary parts to obtain a
coupled algebraic system for $\tau_c$ and $\lambda_H$.

The results obtained by solving this system numerically are shown in
Figure \ref{hopfthresholdfig}, where we set $\mu=1$, $\bar u=2$,
$\bar\chi=1$, and $d_2=0.0004$. Figure \ref{hopfthresholdfig_a} shows
that the spike will develop amplitude oscillations when $\tau$
increases passes through $\tau_c$.  The threshold $\tau_c$ is seen to
be a decreasing function of the cellular diffusivity $d_1$.  Figure
\ref{hopfthresholdfig_b} shows numerically that the transversality
condition of the Hopf bifurcation is satisfied, as unstable
eigenvalues enter $\mbox{Re}(\lambda_0)>0$ when $\tau$ increases above
$\tau_c$.

\begin{figure}[h!]  \centering
\begin{subfigure}[t]{0.5\textwidth}
    \includegraphics[width=1.0\linewidth,height=7.0cm]{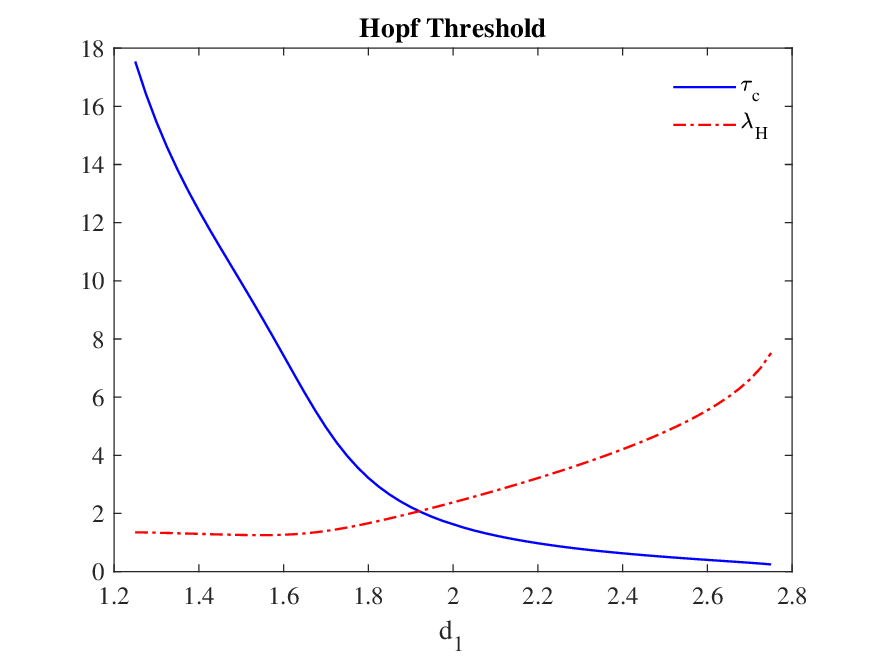}
    \caption{Hopf threshold $(\tau_c,\lambda_H)$}
      \label{hopfthresholdfig_a}
\end{subfigure}\hspace{-0.25in}
\begin{subfigure}[t]{0.5\textwidth}
  \includegraphics[width=1.0\linewidth,height=7.0cm]{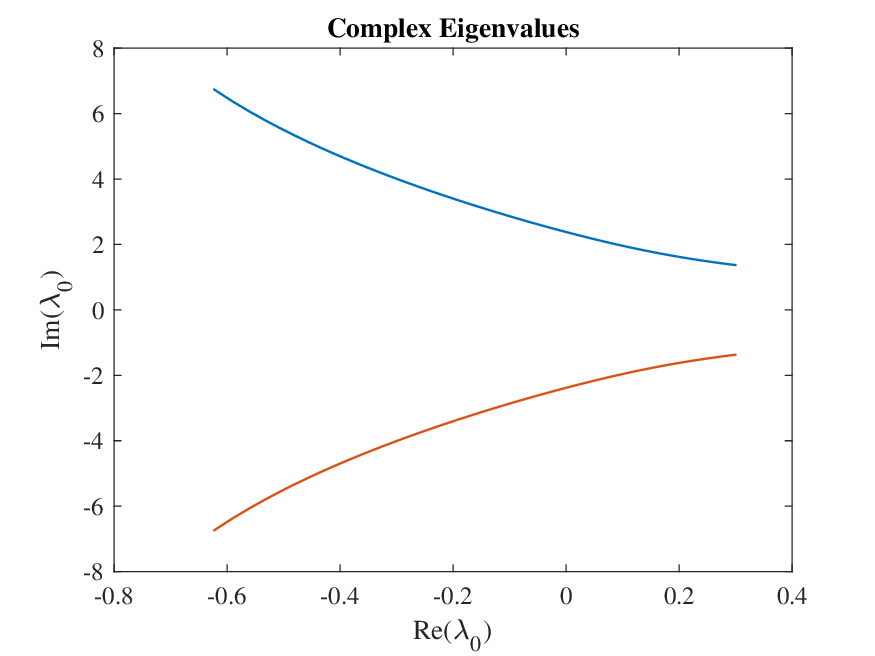}
  \caption{Complex Eigenvalues for \eqref{secthopf-nlepequiv}}
  \label{hopfthresholdfig_b}
\end{subfigure}
\\ \vspace{-0.15in}
\caption{\fontsize{11pt}{9pt}\selectfont { {\em The Hopf bifurcation
      threshold $(\tau_c,\lambda_H)$ (left panel) and the path of the
      complex spectra of (\ref{secthopf-nlepequiv}) as $\tau$ is
      increased above $\tau_c$ for the linearization of a single
      steady-state spike, as obtained by solving
      (\ref{secthopf-algebraic1}) numerically.}  In (a), the solid
    blue line represents the Hopf threshold $\tau_c(d_1)$ and the
    dotted red line denotes the critical eigenvalues $i\lambda_H$.
    The left panel (a) shows that the Hopf threshold $\tau_c$
    decreases as the cellular diffusivity $d_1$ increases.  The right
    panel (b) shows the path of the complex spectra for $d_1=2$ as
    $\tau$ increases. We observe that for $\tau>\tau_c$ unstable
    eigenvalues enter $\mbox{Re}(\lambda_0)>0$.}}
\label{hopfthresholdfig}
\vspace{-0.0in}
\end{figure}
 
\section{Analysis of the Small Eigenvalues}\label{sec4}

In \S \ref{sec3} we analyzed the linear stability of an $N$-spike
steady-state solution with respect to the large eigenvalues of the
linearization. In this section, for $d_1\in {\mathcal T}_e$, we will
formulate a matrix problem for the small eigenvalues of order
${\mathcal O}(\epsilon^3v_{\max 0}^3)$ in the linearization, and we
will calculate an explicit asymptotic formula for them.

To begin the analysis, we differentiate (\ref{ss1}) for $v$ to obtain
\begin{align}\label{smallep42}
  L_{\epsilon} v_{ex}=-u_{ex}\,, \qquad \mbox{where} \quad
  L_{\epsilon} \psi:=\epsilon^2\psi_{xx}-\psi \,.
\end{align}

Our first goal is to obtain an approximate expression for
(\ref{smallep42}) in terms of the inner coordinate near a spike.
Focusing on the $j^{\mbox{th}}$ spike, we find from Proposition \ref{prop1}
that the composite expansion of the quasi-equilibrium solution $u_q$
can be written near the $j^{\mbox{th}}$ spike as
\begin{align*}
  u_q\sim s_j(x)e^{\bar\chi (V_{0}(y)-s_0)}\,,\quad y=\epsilon^{-1}(x-x_j)\,,\quad
  j=1,\ldots,N\,.
\end{align*}
Here $V_0(y)$ is the inner solution near the $j^{\mbox{th}}$ spike and 
\begin{align}\label{smallepsj-of-x}
  s_j(x):=\frac{2\bar\chi}{3 }\epsilon \sum_{k=1}^N v_{\max k}^3(x)\, G(x;x_k)\,,
  \quad j=1,\ldots,N\,.
\end{align}
Since $s_0=o(1)$, we find $e^{-\bar\chi s_0}\sim 1$, so that
\begin{align*}
  u_q\sim s_j(x)e^{\bar\chi V_{0}(y)}\,, \quad y=\epsilon^{-1}(x-x_j)\,,
  \quad j=1,\ldots,N\,.
\end{align*}
We differentiate $u_q$ with respect to $x$ to get for the
$j^{\mbox{th}}$ spike that
\begin{align}\label{uq-of-x}
  u_{qx}\sim s_{jx}(x)e^{\bar\chi V_0}+\epsilon^{-1}\bar\chi s_j(x) e^{\bar\chi V_0}
  V_{0}^{\prime}\,,
\end{align}
and by differentiating (\ref{smallepsj-of-x}) we obtain that
\begin{align}\label{sjx}
  s_{jx}(x)=2\bar\chi\epsilon\sum_{k=1}^Nv_{\max k}^2(x)
  \left[\partial_{x}v_{\max k }(x)\right]\,G(x;x_k)+
  \frac{2\bar\chi}{3}\epsilon\sum_{k=1}^N v_{\max k}^3(x)\,G_x(x;x_k)\,.
\end{align}
Noting from (\ref{diffvmaxi}) of Appendix \ref{appendix:vderiv} that
we can approximate
\begin{align*}
  \partial_x v_{\max k}(x)\sim -\frac{\zeta_{\max k}}{\bar{\chi} s_k}
  \partial_x s_k(x) \,, \qquad \zeta_{\max k} = \left( 1 - \frac{2}{\bar\chi
  v_{\max k}} \right)^{-1} \,, \quad k=1,\ldots,N\,,
\end{align*}
we obtain that (\ref{sjx}) becomes
\begin{align*}
  s_{jx}(x)\sim -2\epsilon\sum_{k=1}^Nv_{\max k}^2(x)\zeta_{\max k}
  \frac{s_{kx}}{s_k}G(x;x_k)+
  \frac{2\bar\chi}{3}\epsilon\sum_{k=1}^N v_{\max k}^3(x)G_x(x;x_k)\,.
\end{align*}
At the steady-state, for which $x_j=x_j^0$, for $j=1,\ldots,N$, we
have $v_{\max k}(x)=v_{\max}^0(x)$, $s_k(x)=s^0(x)$, and
$\zeta_{0}=\zeta_{\max k}$.  Therefore, for the $j^{\mbox{th}}$ spike evaluated
at the steady-state we have
\begin{align}\label{small:sxx}
  s_{x}^0(x)\vert_{x_i=x_j^0}\sim
  \overbrace{-\frac{2s_x^0(x)}{s^0(x)}\epsilon[\zeta_0 (v^0_{\max})^2(x)]
  \sum_{k=1}^N G\big(x_j^{0};x_k^0\big)}^{I_1}+
  \overbrace{\frac{2\bar\chi}{3}\epsilon [(v^0_{\max })^3(x)]\sum_{k=1}^N
  G_x\big(x_{j}^{0};x_k^0\big)}^{I_2}\,.
\end{align}
Next, since $y=\epsilon^{-1}(x-x_j^0)$, we find from
$S^0(y):=s^0(x_j^{0}+\epsilon y)$ and
$V_{\max }^0(y):=v_{\max}^0(x_{j}^{0}+\epsilon y)$, where $S^0$
(resp. $V_{\max }^0$) is $s^0$ (resp. $v_{\max }^0$), that in terms of
the $y$-variable
\begin{align*}
  \partial_y S^0(y)\vert_{x_j=x_j^0}\sim \overbrace{-\frac{2\partial_y S^0(y)}
  {S^0(y)}\epsilon[\zeta_0 (V^0_{\max})^2(y)]
  \sum_{k=1}^N G\big(\epsilon y+x_j^0;x_k^0\big)}^{I_1}+
  \overbrace{\frac{2\bar\chi}{3}\epsilon^2 [(V^0_{\max })^3(y)]\sum_{k=1}^N
  G_x\big(\epsilon y+x_j^0;x_k^0\big)}^{I_2} \,,
\end{align*}
where for $G$ we have $G_x(x;x_k^0)=G_{x}(\epsilon y+x_j^0;x_k^0)\,.$
Then, at $x=x_j^0$, for $j=1,\ldots,N$, we have
$v^0_{\max}(x_j^0)=v_{\max 0}$ and $s^0(x_j^0)=s_0$. In this way,
(\ref{small:sxx}) becomes
\begin{align*}
    s_x^0(x_j^0)=\overbrace{-\frac{2s_x^0(x_j^0)}{s_0}\epsilon \zeta_0 v_{\max 0}^2
  \sum_{k=1}^N G\big(x_j^0;x_k^0\big)}^{II_1}+
  \overbrace{\frac{2\bar\chi}{3}\epsilon v_{\max 0}^3\sum_{k=1}^N
  G_x\big(x_j^0;x_k^0\big)}^{II_2}\,,
\end{align*}
where we identify that $II_2=u_{ox}(x_j^0)$ with $u_o(x)$ being the
outer solution constructed previously. In the $y$-variable, we find as
$|y|\rightarrow \infty$, that
\begin{align}\label{lep:sy_der}
  \partial_y S^0\rightarrow \overbrace{-\frac{2\partial_y S^0}{s_0}\epsilon
 \zeta_0 v_{\max 0}^2 \sum_{k=1}^N G\big(x_j^0;x_k^0\big)}^{II_1}+
  \overbrace{\frac{2\bar\chi}{3}\epsilon^2 v_{\max 0}^3\sum_{k=1}^N
  G_x\big(x_j^0;x_k^0\big)}^{II_2}\,.
\end{align}

According to (\ref{uq-of-x}), we set $x_j=x_j^0$ to conclude for the
$j^{\mbox{th}}$ spike region $x\in(x_j^0-\epsilon,x_j^0+\epsilon)$ that $u_e$
satisfies
\begin{align*}
  u_{ex}\sim s_{x}^0(x)e^{\bar\chi V_0}+\epsilon^{-1}\bar\chi s^0(x)
  e^{\bar\chi V_0}V_{0}^{\prime}\,,
\end{align*}
where $V_{0}^{\prime}=\partial_y V_0$. Finally, we use $y=\epsilon^{-1}(x-x_j^0)$
and transform $u_{ex}$ to the $y$-variable to get
\begin{align*}
  \partial_y U\sim \partial_y S^0(y) e^{\bar\chi V_0}+\bar\chi S^0(y)
  e^{\bar\chi V_0}\partial_y V_0\,,
\end{align*}
where $U_j(y)=u_e(x_j^{0}+\epsilon y)$.  It follows from (\ref{smallep42}) that
for $x$ near $x_j$
\begin{align}\label{leps:V_j}
  L_{\epsilon} V_{j}^{\prime}\sim -\partial_y S^0(y) e^{\bar\chi V_0}-\bar\chi S^0(y)
  e^{\bar\chi V_0}\partial_y V_0\,.
\end{align}

Next, we investigate the linearized eigenvalue problem (\ref{lep}).
To obtain the $j^{\mbox{th}}$ inner solution, we expand
\begin{align}\label{phij}
  \Phi_j(y)= c_j\Phi_{0j}+\epsilon^2 c_j\Phi_{1j}+\ldots\,, \qquad
  \Psi_j(y)=c_jV^{\prime}_j +\epsilon^2 c_j\Psi_{1j}+\ldots\,,
\end{align}
where $y=\epsilon^{-1}(x-x_j)$. Similarly as in \S \ref{sec3}, we
substitute (\ref{phij}) into (\ref{lep}) to get
\begin{align}\label{phi0j}
\Phi_{0j}=\bar\chi U_jV_j^{\prime}\,.
\end{align}
Moreover, by using the fact that $\lambda=o(1),$ we conclude from
(\ref{leps:V_j}) and (\ref{phi0j}) that the $\bar\chi U_jV^{\prime}_j$ term
in the $\psi$-equation (\ref{lep_b}) is cancelled but the term
$-\partial_y S^0(y) e^{\bar\chi V_0}$ remains. To eliminate this term, we need to
formulate the matching condition between the inner and outer
solutions.

Defining the outer solution by $\phi_o$, we now derive the appropriate
jump conditions across the $j^{\mbox{th}}$ spike for $\phi_0$. To begin with,
we observe that
$\partial_{y}S(y) e^{\bar\chi V_0}\sim \partial_y S(y)$ for $|y|$
large. Moreover, since $II_1$ defined in (\ref{lep:sy_der}) is
expressed in terms of the Green's function $G$, we have that $\phi_o$
satisfies the following jump condition across $x_j$:
\begin{align}\label{jumpconditionew}
  \Big[\frac{d_1}{\mu}\phi_{ox}\Big]_j=-\frac{2\epsilon \zeta_0 v_{\max 0}^2}{s_0}
  \langle \phi_o\rangle_j \,, \quad j=1,\ldots,N\,,
\end{align}
where $\big\langle f\big\rangle_j$ and $\big[f\big]_j$ are defined as
$\big\langle
f\big\rangle_j:=\big[f\big(x_j^+\big)+f\big(x_j^-\big)\big]/2$ and
$\big[f\big]_j:=f\big(x_j^+\big)-f\big(x_j^-\big)$, respectively. The
coefficient in (\ref{jumpconditionew}) can be simplified by
eliminating $s_0$ by using (\ref{algebraic2}). In addition, we find
as $\epsilon\rightarrow 0$ that
\begin{align}\label{U0CJphi0j}
  2U_0c_j\Phi_{0j}\sim \frac{2\bar\chi c_j}{3}\epsilon^2  v_{\max j}^3\,
  \delta^{\prime}(x-x_j)\,.
\end{align}
Upon defining $\phi_o:=\epsilon^2\bar \phi_o$, and dropping the
overbar notation, we combine (\ref{jumpconditionew}) and
(\ref{U0CJphi0j}) to obtain the following leading order outer problem
for $\phi_o$ with jump conditions across the $j^{\mbox{th}}$ spike:
\begin{align}\label{smallouterproblem413}
  \frac{d_1}{\mu} \phi_{oxx}+\bar u \phi_o\sim \frac{2\bar\chi }{3}v_{\max 0}^3
  \sum_{j=1}^N c_j \delta^{\prime}(x-x_j)- \frac{3\zeta_0 }{\bar\chi a_gv_{\max 0}}
  \sum_{j=1}^N\langle\phi_o\rangle_j  \delta(x-x_j)\,.
\end{align}

Our next aim is to establish the solvability condition that provides
the matrix eigenvalue problem for the small eigenvalues.  To do so, we
substitute (\ref{phij}) into (\ref{lep_b}) and
multiply it by $V_j^{\prime}$. Upon integrating the resulting
expression over $-1<x<1$, we drop some asymptotically negligible terms
to get
\begin{align}\label{dominantbefore}
  \sum_{i=1}^N \left(c_jL_{\epsilon}V_i^{\prime},V_j^{\prime}\right)+
  \epsilon^2 \sum_{i=1}^N\left(c_iL_{\epsilon}\Psi_{1i},V_j^{\prime}\right)
  +\sum_{i=1}^N\left( c_i\Phi_{0i},V_j^{\prime}\right)+\epsilon^2
  \left(\bar \phi_o, V_j^{\prime}\right)+\epsilon^2\sum_{i=1}^N
  \left(c_{i}\Phi_{1i},V_j^{\prime}\right)\sim \lambda \sum_{i=1}^N
  \left(c_iV_i^{\prime},V_j^{\prime}\right)\,,
\end{align}
for each $j=1,\ldots,N$.  Here the inner product $(f,g)$ is defined as
$\left(f,g\right):=\int_{-1}^1f g\, dx$.  Since $V_j$ decays
exponentially as $\vert y\vert\rightarrow \infty,$ we collect the
dominant terms to simplify (\ref{dominantbefore}) as
\begin{align}\label{dominantafter}
  c_j\left(V_j^{\prime},L_{\epsilon}V_j^{\prime}+\Phi_{0j}\right)+
  \epsilon^2 c_j \left(V_j^{\prime},L_{\epsilon}\Psi_{1j}+\Phi_{1j}\right)
  +\epsilon^2\left(\phi_o, V_j^{\prime}\right)\sim \lambda c_j
  \left(V_j^{\prime},V_j^{\prime}\right)\,, \quad j=1,\ldots,N\,.
\end{align}
Noting that $L_{\epsilon}$ is self-adjoint, we integrate
by parts on the second term of (\ref{dominantafter}). Expressing the
integrals in terms of $y=\epsilon^{-1}(x-x_j)$ we get in terms of
$u_{o}=u_o(x_j+\epsilon y)$ and $\phi_o=\phi_o(x_j+\epsilon y)$ that
\begin{align}\label{dominantintegral}
  -\epsilon^2 c_j\int_{-\infty}^\infty V_j^{\prime} u_{ox} \, dy
  +\epsilon^3\int_{-\infty}^\infty \phi_{o} V_j^{\prime} \, dy
  \sim \lambda c_j\epsilon \int_{-\infty}^\infty \left(V_j^{\prime}\right)^2\,dy\,,
  \quad j=1,\ldots,N \,.
\end{align}

Next, we analyze the left-hand side of (\ref{dominantintegral}) by
expanding  $u_o$ and $\phi_o$ in one-sided Taylor series. In this way,
the left-hand side of (\ref{dominantintegral}) becomes
\begin{align}\label{solvabilitybefore}
  -\epsilon^2 c_j\int_{-\infty}^\infty V_j^{\prime} u_{ox} \, dy
    +\epsilon^3\int_{-\infty}^\infty \phi_{o} V_j^{\prime} \, dy =
  \epsilon^4\langle \phi_{ox}\rangle_j \int_{-\infty}^\infty y\, V_j^{\prime}
     \, dy-\epsilon^3\langle u_{oxx}\rangle c_j\int_{-\infty}^\infty yV_j^{\prime}\,
     dy\,.
\end{align}
By using
$\langle u_{oxx}\rangle_j=-\frac{\bar u\mu}{d_1}\langle u_o\rangle_j$,
we further simplify (\ref{solvabilitybefore}) as
\begin{align}\label{lambdaformulabefore}
  -\epsilon^2 c_j\int_{-\infty}^\infty V_j^{\prime} u_{ox} \, dy
    +\epsilon^3\int_{-\infty}^\infty \phi_{o} V_j^{\prime}\, dy =
 \epsilon^4\langle \phi_{ox}\rangle_j \int_{-\infty}^\infty yV_j^{\prime}\,
     dy+\epsilon^3\frac{s_0c_j\bar u\mu}{d_1} \int_{-\infty}^\infty yV_j^{\prime}
     \, dy\,.
\end{align}
After rewriting the outer problem (\ref{smallouterproblem413}) in
terms of jump conditions, we combine (\ref{dominantintegral}) and
(\ref{lambdaformulabefore}) to obtain the following characterization for the
small eigenvalues:

\begin{proposition}\label{prop41lambda}
  For $d_1\in {\mathcal T}_e$, the eigenvalues $\lambda$ of
  (\ref{lep}) of order ${\mathcal O}(\epsilon^3v_{\max 0}^2)$ satisfy
\begin{align}\label{lambdaformula}
  \lambda c_j\int_{-\infty}^\infty \left(V_j^{\prime}\right)^2\,dy
  \sim \epsilon^3\left(\langle\phi_{ox}\rangle_j+
  \frac{s_0 c_j\bar u\mu}{\epsilon d_1}\right)
  \int_{-\infty}^\infty yV_j^{\prime}\, dy\,, \qquad j=1,\ldots,N\,,
\end{align}
where $s_0={\mathcal O}(\epsilon v_{\max 0}^3)$, and where
$\langle\phi_{ox}\rangle_j$ is determined by the 
solution to the BVP
\begin{align}\label{bvp1}
  \frac{d_1}{\mu} \phi_{oxx}+\bar u \phi_o=0\,, \quad -1<x<1\,, \quad
  x\neq x_j^{0}\,, \,\, j=1,\ldots,N\,; \qquad  \phi_{ox}(\pm 1)=0\,,
\end{align}
which satisfies the following jump conditions across each spike:
\begin{align}\label{bvp1:jump}
  \left[\frac{d_1}{\mu} \phi_o\right]_j=\frac{2\bar\chi c_j}{3}v_{\max 0}^3
\epsilon \,, \qquad \left[\frac{d_1}{\mu}\phi_{ox}\right]_j=-
  \frac{3\zeta_0}{\bar\chi a_gv_{\max 0}}\langle \phi_{o}\rangle_j\,,
  \qquad \zeta_0 := \left(1 - \frac{2}{\bar{\chi} v_{\max 0}}\right)^{-1}\,.
\end{align}
\end{proposition}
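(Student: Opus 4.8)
The plan is to assemble the relations derived above into the two assertions of the proposition; the heart of the matter is a projection of the $\psi$-equation in \eqref{lep} onto the translation mode $V_j^{\prime}$ of the $j^{\mbox{th}}$ spike. I would begin by recalling that the leading-order inner eigenfunction must be the (approximate) translation mode, $\Phi_{0j}=\bar\chi U_j V_j^{\prime}$ and $\Psi_{0j}\propto V_j^{\prime}$ as in \eqref{phij}--\eqref{phi0j}; this is forced because the leading-order core problem \eqref{leading} is translation invariant, while the small eigenvalue $\lambda=\mathcal{O}(\epsilon^3 v_{\max 0}^2)=o(1)$ arises only from the slow variation of $S^0(x)$ (the logistic and outer coupling, through \eqref{smallep42} and \eqref{leps:V_j}) together with the matching to the outer field $\phi_o$.

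Next I would carry out the solvability computation: substitute the inner expansion \eqref{phij} into \eqref{lep_b}, multiply by $V_j^{\prime}$, integrate over $-1<x<1$, and discard the exponentially small cross-spike terms together with the pieces that are of lower order than $\lambda$, arriving at \eqref{dominantbefore} and then \eqref{dominantafter}. Using self-adjointness of $L_\epsilon$ to integrate by parts in the $\Psi_{1j}$ term, and passing to the inner variable $y=\epsilon^{-1}(x-x_j)$, the combination $L_\epsilon V_j^{\prime}+\Phi_{0j}$ collapses by \eqref{leps:V_j}, \eqref{phi0j} to the single outer-coupling forcing $-\partial_y S^0(y)e^{\bar\chi V_0}$, which yields \eqref{dominantintegral}. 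I would then Taylor expand the slowly varying outer quantities $u_o$ and $\phi_o$ in one-sided series about $x_j^{0}$, retaining only the odd moments against the odd function $V_j^{\prime}$, to get \eqref{solvabilitybefore}; substituting $\langle u_{oxx}\rangle_j=-\tfrac{\bar u\mu}{d_1}\langle u_o\rangle_j$ and $\langle u_o\rangle_j\sim s_0$ gives \eqref{lambdaformulabefore}. Equating the left-hand side of \eqref{dominantintegral} with the right-hand side of \eqref{lambdaformulabefore} and dividing through by $\epsilon$ produces \eqref{lambdaformula}; the stated order of $\lambda$ then follows once one checks that $\int_{-\infty}^\infty (V_j^{\prime})^2\, dy$ and $\int_{-\infty}^\infty y V_j^{\prime}\, dy$ are finite and of the size dictated by the sub-inner $v_{\max 0}$-profile of $V_0$ (both integrals converge because $V_j^{\prime}$ decays exponentially in $y$, with the constant $s_0$ subtracted in the second moment).

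For the companion BVP I would argue that away from the spikes $u_e=v_e=\mathcal{O}(s_0)\ll 1$ and $\lambda=o(1)$, so \eqref{lep_a} reduces to the homogeneous equation \eqref{bvp1} with the no-flux conditions inherited from \eqref{lep_a}. The two jump relations in \eqref{bvp1:jump} are obtained by rewriting the distributional outer problem \eqref{smallouterproblem413} (which was itself assembled from \eqref{jumpconditionew}, \eqref{U0CJphi0j}, and the rescaling $\phi_o:=\epsilon^2\bar\phi_o$) as matching conditions: integrating the dipole term $\delta^{\prime}(x-x_j)$ once across $x_j$ gives a jump in $\phi_o$ itself — this dipole originates from the translation mode $\Phi_{0j}\propto U_j V_j^{\prime}\propto \partial_y(e^{\bar\chi V_0})$ looking like $\delta^{\prime}$ on the outer scale — while integrating the term $\langle\phi_o\rangle_j\,\delta(x-x_j)$, traced through the far-field behavior \eqref{lep:sy_der}, gives the jump in $\phi_{ox}$, with the coefficient simplified using $s_0=\tfrac{2}{3}\bar\chi a_g v_{\max 0}^3\epsilon$ from \eqref{ag}. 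Collecting \eqref{bvp1}, \eqref{bvp1:jump} together with \eqref{lambdaformula} finishes the proof.

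I expect the main obstacle to be the bookkeeping that legitimizes all of the truncations: showing that the $\epsilon^2$ inner corrections $(\Phi_{1j},\Psi_{1j})$ enter \eqref{dominantbefore} only below the order $\mathcal{O}(\epsilon^3 v_{\max 0}^2)$, that the one-sided Taylor expansions of $u_o$ and $\phi_o$ are valid (i.e.\ that $S^0(x)$ and $\phi_o$ genuinely vary on the $\mathcal{O}(1)$ scale and that the sub-inner $v_{\max 0}$-structure does not spoil the moment estimates), and that the self-consistency of the ansatz \eqref{phij} — in particular the cancellation of the $\bar\chi U_j V_j^{\prime}$ contribution noted after \eqref{phi0j} — holds uniformly, while keeping every power of $\epsilon$ and of $v_{\max 0}$ aligned through the $\phi_o:=\epsilon^2\bar\phi_o$ rescaling. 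None of these steps is conceptually difficult, but the careful tracking of orders is where the real work lies.
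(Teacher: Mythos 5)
Your proposal reconstructs the paper's own derivation essentially step for step: the translation-mode inner ansatz $\Phi_{0j}=\bar\chi U_j V_j^{\prime}$, $\Psi_{0j}\propto V_j^{\prime}$, the solvability projection of the $\psi$-equation onto $V_j^{\prime}$ using self-adjointness of $L_\epsilon$ to reach \eqref{dominantintegral}, the one-sided Taylor expansion of $u_o$ and $\phi_o$ combined with $\langle u_{oxx}\rangle_j=-\tfrac{\bar u\mu}{d_1}\langle u_o\rangle_j$ and $\langle u_o\rangle_j\sim s_0$ to reach \eqref{lambdaformulabefore}, and the distillation of \eqref{jumpconditionew} and \eqref{U0CJphi0j} (via the rescaling $\phi_o:=\epsilon^2\bar\phi_o$) into the BVP \eqref{bvp1} with jump conditions \eqref{bvp1:jump}. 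This is the same route the paper takes in \S\ref{sec4}, and the order-bookkeeping caveats you flag at the end are exactly the places where care is required; no gap or deviation to report.
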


\subsection{Formulation of the Matrix Problem}

We will now solve (\ref{lambdaformula}) for $d_1\in {\mathcal T}_e$ so
as to derive a matrix eigenvalue problem for the small eigenvalues. To
do so, we let $m_k$, for $k=1,\ldots,N$, be constants to be found and
we write the solution to (\ref{bvp1}) in the form
\begin{align}\label{phiodecompose}
  \phi_o=\frac{2\bar\chi}{3}v_{\max 0}^3\sum_{k=1}^N c_kg(x;x_k)+
  \sum_{k=1}^N m_kG(x;x_k)\,.
\end{align}
Here, for $d_1\in {\mathcal T}_e$, the Green's function $G$ satisfies
(\ref{greenequation}), while the dipole Green's function $g$ satisfies
\begin{align}\label{green:dipole_small}
  \frac{d_1}{\mu}g_{xx}+\bar ug=\delta^{\prime}(x-x_j)\,, \quad -1<x<1\,;
  \quad g_x(\pm 1;x_j)=0\,;
  \qquad  \left[\frac{d_1}{\mu} g\right]_j=1 \,, \quad
      \left[g^{\prime}\right]_j=0\,.
\end{align}

Upon defining $\boldsymbol{m}:=(m_1,\ldots,m_N)^T$, we use the jump
condition in (\ref{bvp1:jump}) to obtain from (\ref{phiodecompose})
that $\boldsymbol{m}$ satisfies
\begin{align}\label{mequation}
  \boldsymbol{m}=-\frac{3\zeta_0}{\bar\chi a_gv_{\max0}}
  \Bigg(\mathcal G\boldsymbol{m} +\frac{2\bar\chi}{3}v_{\max 0}^3
  \mathcal P_g\boldsymbol{c}\Bigg)\,,
\end{align}
where ${\mathcal G}$ is the Green's matrix and where $\mathcal P_g$
and $\boldsymbol{c}$ are defined as
\begin{align}\label{matrixPg}
\mathcal P_g:=\left( \begin{array}{ccc}
  \langle g(x_1;x_1)\rangle_1 & \cdots &  g(x_1;x_N)\\
  \vdots & \ddots & \ddots\\
   g(x_N;x_1) &\cdots & \langle g(x_N;x_N)\rangle_N\\
    \end{array}
  \right)\,, \qquad \boldsymbol{c}:=\left( \begin{array}{c}
  c_1\\
  \vdots  \\
c_N\\
    \end{array}
  \right)\,.
  \end{align}
Upon solving (\ref{mequation}) for $\boldsymbol{m}$ we get
\begin{align}\label{mexpression4226}
  \boldsymbol{m}=-\frac{2v_{\max 0}^2\zeta_0}{a_g}\bigg(I+\frac{3\zeta_0}
  {\bar\chi a_gv_{\max 0}}\mathcal G\bigg)^{-1}\mathcal P_g\boldsymbol{c}\,.
\end{align}

Next, we use (\ref{phiodecompose}) to calculate $\langle\phi_{ox}\rangle_j$,
for $j=1,\ldots,N$, in the form
\begin{align}\label{smallphiouterxsec4}
  \langle \boldsymbol{\phi_{ox}}\rangle=\frac{2\bar\chi}{3}v_{\max 0}^3
  \mathcal G_g\boldsymbol{c}+\mathcal P\boldsymbol{m}\,,
\end{align}
where  $\langle\boldsymbol{\phi_{ox}}\rangle:=
\left(\langle\phi_{ox}\rangle_1,\ldots,\langle\phi_{ox}\rangle_N\right)^T$
and  $\boldsymbol{m}$ is given by (\ref{mexpression4226}). 
Here $\mathcal P$ and $\mathcal G_g$ are defined by
\begin{align}\label{matrixGg}
\mathcal P:=\left( \begin{array}{ccc}
  \langle G_x(x_1;x_1)\rangle_1 & \cdots &  G_x(x_1;x_N)\\
  \vdots & \ddots & \ddots\\
   G_x(x_N;x_1) &\cdots & \langle G_x(x_N;x_N)\rangle_N\\
    \end{array}
  \right)\,,
\qquad
\mathcal G_g:=\left( \begin{array}{ccc}
 g_x(x_1;x_1) & \cdots &  g_x(x_1;x_N)\\
  \vdots & \ddots & \ddots\\
   g_x(x_1;x_N) &\cdots &  g_x(x_N;x_N)\\
    \end{array}
    \right)\,.
\end{align}
By substituting  (\ref{mexpression4226}) and (\ref{smallphiouterxsec4}) into
(\ref{lambdaformula}) of Proposition \ref{prop41lambda}, we obtain that
\begin{align}\label{433insection4}
  \lambda \boldsymbol{c}\sim -\epsilon^3\beta_0\mathcal M\boldsymbol{c}\,,
  \qquad \mbox{where} \quad   \beta_0:=-
  \frac{\int_0^\infty yV_{0}^{\prime}\, dy}{\int_0^\infty
  \left(V^{\prime}_{0}\right)^2 \, dy}>0 \,.
\end{align}
Here $V_0$ is the common leading order core solution, and ${\mathcal M}$ is
defined for $d_1\in {\mathcal T}_e$ by
\begin{align}\label{jacobianbutinsmallep}
  \mathcal M:=\frac{2\bar\chi}{3}v_{\max 0}^3\mathcal G_g-
  \frac{2v_{\max 0}^2\zeta_0}{a_g}\mathcal P\left(I+\frac{3\zeta_0}
  {\bar\chi a_gv_{\max 0}}
  \mathcal G\right)^{-1}\mathcal P_g+\frac{s_0 \bar u\mu}{\epsilon d_1} I\,.
\end{align}

This result shows that $\lambda$ and $\boldsymbol{c}$ are related to
eigenpairs of the matrix $\mathcal M.$ As a result, the
analysis of the linear stability properties of the small eigenvalues
in (\ref{lep}) when $d_1\in {\mathcal T}_e$ is reduced to the problem
of analyzing the eigenvalues of the matrix $\mathcal M$ and
determining conditions on the parameters for which
${\mbox Re}(\lambda)<0$.

An important relationship between the existence of ${\mathcal M}$ and
the invertibility of the Jacobian associated with the nonlinear
algebraic system of quasi-equilibria, as studied in \S
\ref{sec:jac_non}, is summarized as follows:

\begin{remark}\label{remark:non_invert} Recalling that $a_g=\sigma_1$, the
  inverse
  $\left(I+\frac{3\zeta_0}{\bar\chi a_gv_{\max 0}} \mathcal
    G\right)^{-1}$ appearing in ${\mathcal M}$ of
  (\ref{jacobianbutinsmallep}) does not exist when
  \begin{equation}\label{small:non_invert}
    \frac{\sigma_j}{\sigma_1}= -\frac{\bar{\chi}v_{\max 0}}{3\zeta_0}=
    - \frac{\bar{\chi} v_{\max 0}}{3} \left(1 - \frac{2}{\bar{\chi} v_{\max 0}}
      \right) = \frac{2}{3} - \frac{\bar{\chi}v_{\max 0}}{3} \,,
 \end{equation}
 where $\sigma_j$ for $j=1,\ldots,N$ are the eigenvalues of
 ${\mathcal G}$ when $\tau=0$ and $d_1\in {\mathcal T}_e$. As a
 result, the non-existence of the small eigenvalues coincides, by
 using (\ref{njac:jthres}), with the non-invertibility of the Jacobian
  matrix of the linearization of the quasi-equilibrium solution around
  the steady-state. By setting $j=N$ in (\ref{small:non_invert}), we obtain
  $d_1=d_{1cN}^{\star}$, as given in (\ref{d1d1cj}), which approximates the
  competition instability threshold for an $N$-spike steady state solution
  when $\tau=0$ (see Proposition \ref{prop33}).
\end{remark}

To analyze ${\mathcal M}$, we must calculate the matrix spectrum of the
dipole Green's matrix $\mathcal G_g$ given in \eqref{matrixGg} when
$d_1\in {\mathcal T}_e$.  As shown in Appendix \ref{appensecB}, when
$d_1\in {\mathcal T}_e$ the inverse matrix of $\mathcal G_g$ is
readily identified as being proportional to the inverse of a
$N\times N$ symmetric tridiagonal matrix, labeled by ${\mathcal D}_g$,
and defined in (\ref{mathcalDg}) as
\begin{equation}\label{slep:dg}
  {\mathcal G}_g = \frac{\mu\theta}{d_1} {\mathcal D}_g^{-1} \,.
\end{equation}
The matrix spectrum of ${\mathcal D}_g$ for $d_1\in {\mathcal T}_e$,
is readily calculated as in \cite{iron2001stability}, and is
summarized as follows:

\begin{proposition}\label{prop41}
  The eigenvalues $\xi_j$ and the normalized eigenvectors
  $\boldsymbol{\nu_j}=(\nu_{1,j},\ldots,\nu_{N,j})^T$ of $\mathcal D_g$ are
\begin{subequations}\label{prop411}
\begin{align}
       \xi_1&=2\cot\left(\frac{2\theta}{N}\right)+2
       \csc\left(\frac{2\theta}{N}\right)=2\cot\left(\frac{\theta}{N}
             \right) \,,  \quad
{\boldsymbol{\nu}}_{1}=\frac{1}{\sqrt{N}}\left(1,-1,\ldots,1,\ldots,
             (-1)^{N+1}\right)^T\,,\label{prop411_a}\\
  \xi_j&=2\cot\left(\frac{2\theta}{N}\right) -
  2 \csc\left(\frac{2\theta}{N}\right)\cos\left(\frac{\pi (j-1)}{N}\right)
  \,, \quad  \nu_{l,j}=
   \sqrt{\frac{2}{N}}\sin\left(\frac{\pi (j-1)}{N}(l-\frac{1}{2})\right)\,,
  \,\,\, j=2,\ldots,N\,, \label{prop411_b}
\end{align}
\end{subequations}
for $l=1,\ldots,N$, where $\theta= \sqrt{\mu\bar{\mu}/d_1}$.  When
$d_1\in {\mathcal T}_e$, i.e.~$\theta <{\pi N/2}$, we have the
ordering $\xi_2<\ldots<\xi_N<\xi_1$.
\end{proposition}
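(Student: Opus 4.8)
The plan is to diagonalize the symmetric tridiagonal matrix ${\mathcal D}_g$ from (\ref{mathcalDg}) of Appendix \ref{appensecB} directly, following the standard approach used for the analogous matrices that arise in the small-eigenvalue analysis of the 1D Gierer--Meinhardt model in \cite{iron2001stability}. The matrix ${\mathcal D}_g$ is a near-Toeplitz tridiagonal matrix: its sub- and super-diagonals are constant with value $e$, its interior diagonal entries all equal $d$, and only the two corner entries $({\mathcal D}_g)_{11}=({\mathcal D}_g)_{NN}$ are modified to a different value $f$. The first step is to read off, from the dipole Green's function $g(x;x_k)$ evaluated at the equally-spaced points $x_j^0$ in (\ref{locations}) and the normalization in (\ref{slep:dg}), that $d=2\cot(2\theta/N)$, $e=-\csc(2\theta/N)$, and $f=d-e=2\cot(2\theta/N)+\csc(2\theta/N)$, where $\theta=\sqrt{\mu\bar u/d_1}$. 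The identity $\cot(2x)+\csc(2x)=\cot x$ will be used repeatedly; in particular it gives $d-2e=2\cot(\theta/N)$, which is the source of the simplified form of $\xi_1$.

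The second step is to solve ${\mathcal D}_g\boldsymbol{\nu}=\xi\boldsymbol{\nu}$ componentwise. For an interior index $2\le l\le N-1$ the corresponding row gives the constant-coefficient three-term recurrence $e\,\nu_{l-1}+(d-\xi)\,\nu_l+e\,\nu_{l+1}=0$, whose bounded solutions are $\nu_l=\cos(l\omega+\phi)$ with $\xi=d+2e\cos\omega$. Since ${\mathcal D}_g$ is centrosymmetric (invariant under the reversal $l\mapsto N+1-l$), every eigenvector can be taken symmetric or antisymmetric under this reversal, so the last boundary row is automatically satisfied once the first row is, and the spectrum splits into two families. The distinguished member is the sign-alternating vector $\boldsymbol{\nu}_1=N^{-1/2}(1,-1,\ldots,(-1)^{N+1})^T$, corresponding to $\omega=\pi$: the interior rows give $d-2e$ and the first row gives $f-e$, and the choice $f=d-e$ makes these agree, so $\xi_1=d-2e=2\cot(\theta/N)=2\cot(2\theta/N)+2\csc(2\theta/N)$. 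For the remaining $N-1$ eigenvectors one takes the staggered sine vectors $\nu_{l,j}=\sqrt{2/N}\,\sin\!\big(\tfrac{\pi(j-1)}{N}(l-\tfrac12)\big)$ for $j=2,\ldots,N$; using $\sin 3\psi=2\cos 2\psi\,\sin\psi+\sin\psi$ with $\psi=\tfrac{\pi(j-1)}{2N}$ one checks that both the interior recurrence and the first boundary row (again using $f=d-e$) close, and reads off $\xi_j=d+2e\cos\!\big(\tfrac{\pi(j-1)}{N}\big)=2\cot(2\theta/N)-2\csc(2\theta/N)\cos\!\big(\tfrac{\pi(j-1)}{N}\big)$.

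The third step is to verify orthonormality of $\{\boldsymbol{\nu}_1,\ldots,\boldsymbol{\nu}_N\}$. For the $\boldsymbol{\nu}_j$ with $j\ge 2$ this is the standard discrete orthogonality of the DST-type basis $\{\sin(\pi(j-1)(l-\tfrac12)/N)\}_{l=1}^N$, which follows from elementary geometric-series identities; orthogonality of $\boldsymbol{\nu}_1$ against each $\boldsymbol{\nu}_j$, $j\ge2$, follows in the same way. Finally, since $d_1\in{\mathcal T}_e$ forces $d_1>d_{1pN}={4\mu\bar u/(N^2\pi^2)}$ and hence $\theta<{\pi N/2}$, we have $\csc(2\theta/N)>0$; because $j\mapsto\cos(\pi(j-1)/N)$ is strictly decreasing on $j=2,\ldots,N$, the eigenvalues satisfy $\xi_2<\xi_3<\cdots<\xi_N$, and since $\cos(\pi(N-1)/N)=-\cos(\pi/N)>-1$ for $N\ge2$ we get $\xi_N<2\cot(2\theta/N)+2\csc(2\theta/N)=\xi_1$, which yields the claimed ordering $\xi_2<\cdots<\xi_N<\xi_1$.

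The main obstacle I anticipate is the bookkeeping at the two boundary rows: one must correctly identify the corner modification $f$ of ${\mathcal D}_g$ and exploit the half-integer staggering $(l-\tfrac12)$ in the trial eigenvectors to make the first (hence, by centrosymmetry, the last) row equation close, which forces a few half-angle and product-to-sum trigonometric manipulations. The interior recurrence, the orthonormality, and the ordering are then routine.
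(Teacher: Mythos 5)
Your proposal is correct and takes essentially the same approach the paper uses: the paper itself gives no computation for Proposition~\ref{prop41} but defers to \cite{iron2001stability}, and your argument is exactly the standard explicit diagonalization of a near-Toeplitz symmetric tridiagonal matrix with modified corner entries that that reference carries out. One small point of care: your labeling of the entries ($d$ for interior diagonal, $e$ for off-diagonal, $f$ for the corners, with $f=d-e$) is \emph{inverted} relative to the paper's own notation in (\ref{mathcalDg}), where the corners are $d_g$, the interior diagonal is $e_g$, and the off-diagonal is $f_g$, with $d_g=e_g-f_g$; the values you assign are numerically correct (indeed your $f=2\cot(2\theta/N)+\csc(2\theta/N)$ equals the paper's $d_g=\cot(2\theta/N)+\cot(\theta/N)$ by the half-angle identity $\cot x=\cot 2x+\csc 2x$), but you should align the symbols with the paper's to avoid confusing a reader.
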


By using the key Proposition \ref{prop41}, in Appendix \ref{appensecC}
we show how to diagonalize ${\mathcal M}$ and compute its
spectrum. This leads to the following explicit asymptotic
result for the small eigenvalues, valid as $\epsilon\to 0$:

\begin{proposition}\label{prop:small} For
  $d_1\in {\mathcal T}_e$ and $d_{1}<d_{1cN}^{\star}$, the small
  eigenvalues $\lambda_j$ satisfying (\ref{433insection4}) are given
  explicitly for $\epsilon\to 0$ by
\begin{align}\label{prop:small_eig}
  \lambda_j\sim -\frac{2\epsilon^3\beta_0}{3} \bar\chi v_{\max 0}^3
  \left( \frac{\mu\theta}{d_1\xi_j} -\frac{3\zeta_0}{\bar\chi a_gv_{\max 0}}
  \frac{\omega_j}{\xi_j}+\frac{\bar u\mu}{d_1} a_g\right)\,, \quad
  j=1,\ldots,N\,,
\end{align}
where $\xi_j$ are the matrix eigenvalues in (\ref{prop411}) and
$\zeta_0=\left(1-{2/(\bar{\chi}v_{\max 0})}\right)^{-1}$.
 Here $a_g$ and $\omega_j$, as
defined in (\ref{ag}) and (\ref{appc:sigma_eig}), respectively,
are given by
 \begin{align}\label{prop:small_omegaj}
   a_g = \frac{1}{2} \sqrt{\frac{\mu}{d_1\bar{\mu}}}
  \cot\left(\frac{\theta}{N}\right) \,; \qquad \omega_1=0 \,, \quad
   \omega_j=\frac{\mu^2}{d_1^2}\csc^2\left(\frac{2\theta}{N}\right)
   \frac{\sin^2\left(\frac{(j-1)}{N}\pi\right)}{\left(-\xi_j+
   \frac{3\zeta_0}{\bar\chi a_gv_{\max 0}}\sqrt{\frac{\mu}{d_1\bar u}}\right)}\,,
   \quad j=2,\ldots,N\,,
 \end{align}
 where $\theta=\sqrt{\mu\bar{\mu}/d_1}$.  The associated eigenvectors
 $\boldsymbol{c}$ are simply the eigenvectors of ${\mathcal G}_g$ as
 given in (\ref{prop411}).
\end{proposition}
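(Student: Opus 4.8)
The plan is to diagonalize the matrix $\mathcal{M}$ of (\ref{jacobianbutinsmallep}) simultaneously in all of its Green's-function blocks and then read off each small eigenvalue from (\ref{433insection4}) as a scalar. The first step is to observe that, because the spikes are equally spaced at the locations (\ref{locations}) on $(-1,1)$ under Neumann boundary conditions, the $N$-spike configuration is invariant under the reflections about the spike midpoints and about the domain center; consequently each of the matrices $\mathcal{G}$, $\mathcal{G}_g$, $\mathcal{P}$ and $\mathcal{P}_g$ is equivariant for the induced permutation action on $\{1,\dots,N\}$ and is therefore diagonalized by the common orthonormal family $\{\boldsymbol{\nu}_j\}_{j=1}^{N}$ of eigenvectors of $\mathcal{D}_g$ recorded in Proposition \ref{prop41} (this is the same mechanism by which, via (\ref{slep:dg}), $\mathcal{G}_g=\frac{\mu\theta}{d_1}\mathcal{D}_g^{-1}$ has eigenvalue $\frac{\mu\theta}{d_1\xi_j}$ on $\boldsymbol{\nu}_j$). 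I denote by $\sigma_j$ the eigenvalue of $\mathcal{G}$ on $\boldsymbol{\nu}_j$ (with $\tau=0$, as computed in Appendix \ref{appendixA}, so that $\sigma_1=a_g$), and by $p_j$, $q_j$ those of $\mathcal{P}$, $\mathcal{P}_g$, respectively. The hypothesis $d_1<d_{1cN}^{\star}$ together with $d_1\in{\mathcal T}_e$ guarantees, via Remark \ref{remark:non_invert}, that $1+\frac{3\zeta_0}{\bar\chi a_g v_{\max 0}}\sigma_j\neq 0$ for every $j$, so the inverse in $\mathcal{M}$ exists and this diagonalization is legitimate.

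Applying $\mathcal{M}$ to $\boldsymbol{c}=\boldsymbol{\nu}_j$, and using that $\bigl(I+\frac{3\zeta_0}{\bar\chi a_g v_{\max 0}}\mathcal{G}\bigr)^{-1}$ acts on $\boldsymbol{\nu}_j$ as multiplication by $\bigl(1+\frac{3\zeta_0}{\bar\chi a_g v_{\max 0}}\sigma_j\bigr)^{-1}$, relation (\ref{433insection4}) gives
\[
  \lambda_j \sim -\epsilon^3\beta_0\left[\frac{2\bar\chi}{3}v_{\max 0}^3\,\frac{\mu\theta}{d_1\xi_j}
  - \frac{2v_{\max 0}^2\zeta_0}{a_g}\,\frac{p_j q_j}{1+\frac{3\zeta_0}{\bar\chi a_g v_{\max 0}}\sigma_j}
  + \frac{s_0\bar u\mu}{\epsilon d_1}\right].
\]
Substituting $s_0=\frac{2\bar\chi}{3}\epsilon v_{\max 0}^3 a_g$ from (\ref{ag}) into the last term and pulling the common coefficient $\frac{2\bar\chi}{3}v_{\max 0}^3$ out of all three terms (using $\frac{2v_{\max 0}^2\zeta_0}{a_g}=\frac{2\bar\chi}{3}v_{\max 0}^3\cdot\frac{3\zeta_0}{\bar\chi a_g v_{\max 0}}$) reduces the bracket to $\frac{2\bar\chi}{3}v_{\max 0}^3$ times $\bigl(\frac{\mu\theta}{d_1\xi_j}-\frac{3\zeta_0}{\bar\chi a_g v_{\max 0}}\frac{\omega_j}{\xi_j}+\frac{\bar u\mu}{d_1}a_g\bigr)$, where I set $\frac{\omega_j}{\xi_j}:=\frac{p_j q_j}{1+\frac{3\zeta_0}{\bar\chi a_g v_{\max 0}}\sigma_j}$. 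This is already of the form (\ref{prop:small_eig}), and since $\boldsymbol{\nu}_j$ is simultaneously an eigenvector of $\mathcal{M}$ and of $\mathcal{G}_g$, the eigenvector claim follows at once; it remains only to compute $p_j$, $q_j$, $\sigma_j$ and to simplify $\omega_j$ into the trigonometric form in (\ref{prop:small_omegaj}).

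The remaining, and main, step is to evaluate the eigenvalues of the non-symmetric derivative matrices $\mathcal{P}$ and $\mathcal{P}_g$. As in the treatment of $\mathcal{G}_g$ in Appendix \ref{appensecB}, one differentiates the explicit Helmholtz Green's function (\ref{greenfunction}) and the dipole Green's function (\ref{green:dipole_small}) at the equally-spaced nodes to obtain closed-form matrix entries; applying $\boldsymbol{\nu}_j$ and summing the resulting finite trigonometric series (the same computation that yields $\xi_j$ in Proposition \ref{prop41}) produces closed forms for $p_j$ and $q_j$. For the sign-alternating mode $\boldsymbol{\nu}_1$ one of these eigenvalues vanishes, which forces $\omega_1=0$; for $j=2,\dots,N$, combining the closed forms for $p_j$, $q_j$, $\sigma_j$ with $a_g$ from (\ref{ag}) and the explicit $\xi_j$ of Proposition \ref{prop41} collapses $\frac{\omega_j}{\xi_j}$, after a routine trigonometric simplification, to the value recorded in (\ref{prop:small_omegaj}). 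Finally, multiplying through by $-\epsilon^3\beta_0$ and by the extracted coefficient $\frac{2\bar\chi}{3}v_{\max 0}^3$ gives (\ref{prop:small_eig}).

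I expect the diagonalization of $\mathcal{P}$ and $\mathcal{P}_g$ to be the genuine obstacle: one must confirm that these non-symmetric matrices really do share the eigenbasis $\{\boldsymbol{\nu}_j\}$ and then extract their eigenvalues in closed trigonometric form, and this is exactly where the equally-spaced geometry and the explicit structure of $G$ and of the dipole Green's function must be used with care — in particular for the delicate cancellation producing $\omega_1=0$. Everything downstream is bookkeeping: inserting the explicit $\xi_j$, $a_g$, $\zeta_0$ and $s_0$ and collecting terms to land on (\ref{prop:small_eig})–(\ref{prop:small_omegaj}).
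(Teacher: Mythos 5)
Your plan hinges on the claim that $\mathcal{G}$, $\mathcal{G}_g$, $\mathcal{P}$, $\mathcal{P}_g$ are all diagonalized by the common orthonormal family $\{\boldsymbol{\nu}_j\}$ of eigenvectors of $\mathcal{D}_g$. That claim is false, and it is where the argument breaks. By (\ref{key:G_to_D}), $\mathcal{G}=\sqrt{\mu/(d_1\bar u)}\,\mathcal{D}^{-1}$ is diagonalized by the eigenvectors $\boldsymbol{q}_j$ of $\mathcal{D}$ from Proposition \ref{prop31}, which have cosine-type entries and in particular $\boldsymbol{q}_1=(1,\dots,1)^T/\sqrt N$; the eigenvectors $\boldsymbol{\nu}_j$ of $\mathcal{D}_g$ have sine-type entries with $\boldsymbol{\nu}_1=(1,-1,\dots,(-1)^{N+1})^T/\sqrt N$. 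These bases are genuinely different (e.g.\ $\boldsymbol{\nu}_1$ is not an eigenvector of the tridiagonal $\mathcal{D}$: the interior rows give $(e-2f)$ but the first row gives $d-f=e$). Equivariance of the spike configuration under reflections does not force a common eigenbasis here, precisely because $\mathcal{D}$ and $\mathcal{D}_g$ encode different jump conditions (Neumann-type versus Dirichlet-type coupling) and hence have different symmetric/antisymmetric mode structures. Consequently your scalars $\sigma_j,p_j,q_j$ as "eigenvalues of $\mathcal{G},\mathcal{P},\mathcal{P}_g$ on $\boldsymbol{\nu}_j$'' do not exist, and the identification $\omega_j/\xi_j := p_j q_j/(1+\tfrac{3\zeta_0}{\bar\chi a_g v_{\max 0}}\sigma_j)$ has no meaning.

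What is actually true, and what the paper's proof in Appendix \ref{appensecC} exploits, is that the full matrix $\mathcal{M}$ is nonetheless diagonalized by $\mathcal{Q}_g$ even though the middle block is built from matrices that live in the $\mathcal{Q}$-basis. The route is: use $\mathcal{P}\mathcal{D}=(\mathcal{P}_g\mathcal{D}_g)^T=-\tfrac{\mu}{2d_1}\csc(2\theta/N)\,\mathcal{C}^T$ (from (\ref{matrixP}), (\ref{matrixPgrepeat})) to write $\mathcal{P}=(\mathcal{P}_g\mathcal{D}_g)^T\mathcal{D}^{-1}$, rewrite the middle term of $\mathcal{M}$ as $\tfrac{\mu^2}{4d_1^2}\csc^2(2\theta/N)\,\mathcal{C}^T\mathcal{Q}\mathcal{H}\mathcal{Q}^T\mathcal{C}\,\mathcal{D}_g^{-1}$ with $\mathcal{H}$ the diagonal matrix of (\ref{appc:hmat}), and then invoke the nontrivial algebraic fact --- the real engine of the argument, carried out in \cite{iron2001stability} --- that the intertwining matrix $\mathcal{S}:=\mathcal{Q}_g^T\mathcal{C}^T\mathcal{Q}$ is diagonal (with $j$-th entry proportional to $\sin\left(\frac{(j-1)\pi}{N}\right)$, which is also what gives $\omega_1=0$). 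Only then is $\Sigma=\mathcal{S}\mathcal{H}\mathcal{S}^T$ diagonal and $\mathcal{M}=\mathcal{Q}_g\big(a\mathcal{K}_g^{-1}+bI-\tfrac{2v_{\max 0}^2\zeta_0}{a_g}\Sigma\mathcal{K}_g^{-1}\big)\mathcal{Q}_g^T$. You correctly flag the eigenbasis claim for $\mathcal{P},\mathcal{P}_g$ as the "genuine obstacle,'' but the obstacle is in fact fatal to your setup: one does not diagonalize those matrices individually at all, and one must pass through the intertwiner $\mathcal{S}$. Your final algebraic bookkeeping (pulling out $\tfrac{2\bar\chi}{3}v_{\max 0}^3$, substituting $s_0$) matches the paper, but the central lemma needed to justify the diagonalization is missing.
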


As shown below in \S \ref{sec:balance_dae}, the stability threshold of
an $N$-spike steady-state for the small eigenvalues can also be obtained
by first deriving a DAE system for slow spike dynamics and then linearizing
this DAE system about the equilibrium spike locations.

\subsection{Stability Thresholds for the Small Eigenvalues}\label{sec:small_thresh}

In this subsection, we examine the explicit formulae
(\ref{prop:small_eig}) for the small eigenvalues on the range
$d_1\in {\mathcal T}_e$ but with $d_1<d_{1cN}^{\star}$ as given in
(\ref{d1d1cj}). This latter inequality is needed to ensure that the
steady-state is linearly stable with respect to the large eigenvalues
when $\tau=0$. To this end, we write (\ref{prop:small_eig}) in the
more convenient form
\begin{equation}\label{ss:lambda_j}
  \lambda_j=-\frac{2\epsilon^3\beta_0}{3} \bar\chi v_{\max 0}^3h_j \,,
  \qquad \mbox{where} \quad h_j:= \frac{1}{\xi_j} \left(
  \frac{\mu\theta}{d_1} -
  \frac{3\zeta_0\omega_j}{\bar\chi a_gv_{\max 0}}\right)
  +\frac{\bar u\mu}{d_1} a_g\,, \quad j=1,\ldots,N\,,
\end{equation}
where $\omega_j$ and $a_g$ are defined in (\ref{prop:small_omegaj}).
If on the range $d_1\in {\mathcal T}_e$, but with $d_1<d_{1cN}^{\star}$, we
have $h_j>0$ for each $j=1,\ldots,N$, we conclude from (\ref{ss:lambda_j}) that
the $N$-spike steady-state solution is linearly stable with respect to
both the small and large eigenvalues when $\tau=0$.  Alternatively, if
for some $j\in \lbrace{1,\ldots,N\rbrace}$, we have $h_j<0$ on some
range of $d_1\in {\mathcal T}_e$, but with $d_1<d_{1cN}^{\star}$, it
follows that the $N$-spike steady-state is unstable to the small
eigenvalues on this range but is linearly stable to the large
eigenvalues when $\tau=0$.

For any $N\geq 1$, we first establish the sign of $h_1$ in
(\ref{ss:lambda_j}) when $d_1\in {\mathcal T}_e$. By using
$\theta=\sqrt{\mu \bar{u}/d_1}$ together with
(\ref{prop:small_omegaj}) and (\ref{prop411_a}) for $a_g$ and $\xi_1$,
respectively, we use the fact that $\omega_1=0$ in (\ref{ss:lambda_j})
to obtain 
\begin{equation}\label{splot:h1}
  h_1= \frac{\theta^3}{2\bar u} \left[\tan\left(\frac{\theta}{N}\right)+
    \cot\left(\frac{\theta}{N}\right)
  \right] = \frac{\theta^3}{\bar u} \csc\left(\frac{2\theta}{N}\right)\,.
\end{equation}
When $d_1\in \mathcal{T}_e$, we have $d_1>d_{1pN}$ and so we require
that $\theta<{N\pi/2}$. As $d_1\to d_{1pN}$ from above, or
equivalently as $\theta\to {N\pi/2}$ from below, $h_1$ has a vertical
asymptote with $h_1\to +\infty$. However, for $\theta<{N\pi/2}$, we
observe from (\ref{splot:h1}) that $h_1>0$, and so this mode is always
stable for the small eigenvalues.  This leads to the following result:

\begin{proposition}\label{prop:one-spike} For $d_1>d_{1p1}={4\mu\bar{u}/\pi^2}$,
  and in the limit $\epsilon\to 0$, a one-spike steady-state solution
  for (\ref{timedependent1}) is always linearly stable with respect to
  the small eigenvalue.
\end{proposition}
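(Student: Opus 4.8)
The plan is to specialize Proposition \ref{prop:small} to the case $N=1$ and then read off the sign from the explicit formula for $h_1$ already obtained in (\ref{splot:h1}). First I would record that when $N=1$ the index range $m=1,\ldots,N-1$ in the definition (\ref{d1:admiss}) of the admissible set is empty, so $\mathcal{T}_e = \{d_1 : d_1 > d_{1p1}\}$ with $d_{1p1} = 4\mu\bar u/\pi^2$; moreover $d_{1c1}^\star = \infty$ by (\ref{d1d1cj}). Hence the hypotheses ``$d_1\in\mathcal{T}_e$ and $d_1<d_{1cN}^\star$'' of Proposition \ref{prop:small} hold for \emph{every} $d_1>d_{1p1}$, and for a one-spike steady-state there is exactly one small eigenvalue, namely $\lambda_1$ given by (\ref{ss:lambda_j}) with $N=1$:
$$\lambda_1 = -\tfrac{2}{3}\epsilon^3\beta_0\,\bar\chi\, v_{\max 0}^3\, h_1 \,.$$

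Next I would evaluate $h_1$ using (\ref{splot:h1}) with $N=1$, which gives $h_1 = \theta^3\csc(2\theta)/\bar u$ with $\theta = \sqrt{\mu\bar u/d_1}$ (here $\omega_1=0$ has already been used in deriving (\ref{splot:h1})). The condition $d_1 > d_{1p1} = 4\mu\bar u/\pi^2$ is exactly equivalent to $\theta < \pi/2$, so $2\theta\in(0,\pi)$ and therefore $\csc(2\theta) > 0$. Since $\theta > 0$ and $\bar u > 0$, it follows that $h_1 > 0$ for all $d_1>d_{1p1}$.

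Finally, combining $h_1>0$ with the positivity of the remaining factors — $\beta_0>0$ by (\ref{433insection4}), $\bar\chi = \chi/d_1 > 0$, $v_{\max 0}>0$ (the spike amplitude, with $v_{\max 0} = \mathcal{O}(-\log\epsilon)\gg 1$ by (\ref{vm:dominant})), and $\epsilon^3>0$ — yields $\lambda_1 < 0$, in particular $\mathrm{Re}(\lambda_1)<0$, so the one-spike steady-state is linearly stable with respect to the small eigenvalue for all $d_1>d_{1p1}$, as claimed. I do not expect a genuine obstacle here: the only work is bookkeeping (checking that $\mathcal{T}_e$ and $d_{1c1}^\star$ impose no further restriction when $N=1$) and the elementary sign of the cosecant on $(0,\pi)$. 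The substantive content is the explicit asymptotic formula (\ref{prop:small_eig})–(\ref{splot:h1}) established in Proposition \ref{prop:small}, which we may assume.
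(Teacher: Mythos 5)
Your proposal is correct and follows essentially the same route as the paper: it reduces to specializing Proposition \ref{prop:small} and the formula (\ref{splot:h1}) to $N=1$, noting $\mathcal{T}_e=\{d_1>d_{1p1}\}$ and $d_{1c1}^{\star}=\infty$ so no extra restriction arises, and then reading off $h_1=\theta^3\csc(2\theta)/\bar u>0$ from $0<\theta<\pi/2$, hence $\lambda_1<0$. This is precisely the argument the paper gives in the paragraph immediately preceding the proposition (there specialized from the general-$N$ observation that $h_1>0$ whenever $\theta<N\pi/2$), with the bookkeeping on $d_{1c1}^{\star}$ being a correct and welcome verification that the hypotheses of Proposition \ref{prop:small} are vacuous for $N=1$.
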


To examine the other mode functions $h_{j}$ for $j=2,\ldots,N$, it is
convenient to write $h_{j}$ in (\ref{ss:lambda_j}) in terms of
$\theta=\sqrt{\bar{u}\mu/d_1}$ rather than $d_1$. To do so, we substitute
(\ref{prop:small_omegaj}) into (\ref{ss:lambda_j}), and observe that
$a_g= \theta (2\bar{u})^{-1} \cot\left({\theta/N}\right)$ and
$v_{\max 0}\bar{\chi} (6\zeta_0)^{-1}={\left(v_{\max 0}\bar{\chi}-2\right)/6}$
upon recalling that $\zeta_0=\left(1-{2/(\bar{\chi}v_{\max 0})}\right)^{-1}$.
In this way, we obtain after some algebra that $h_j$ can be
written explicitly in terms of $\theta$ as
\begin{subequations}\label{small:hj_theta}
\begin{equation}\label{small:hj_theta_1}
  h_{j} = \frac{\theta^3}{2\bar{u}} \cot\left(\frac{\theta}{N}\right)
  \hat{h}_j \,, \qquad
  \hat{h}_j := \frac{1}{\hat{\xi}_j}\left(
    2+ \hat{\xi}_j  - 2
    \frac{\csc^2\left({2\theta/N}\right) \sin^2\left({\pi (j-1)/N}\right)}
    {1 - a_1 {\hat{\xi}_j/2}} \right) \,,
\end{equation}
where we have defined $\hat{\xi}_j$ and re-introduced $a_1$ (see
(\ref{d1d1cj})) as
\begin{equation}\label{small:hj_theta_2}
  a_1:= \frac{1}{3} \left(v_{\max 0} \bar{\chi} -2 \right)\,, \qquad
  \hat{\xi}_j := \xi_j \cot\left(\frac{\theta}{N}\right) \,.
\end{equation}
\end{subequations}

Next, we determine the algebraic sign, the asymptotes, and the
continuity properties of $h_j$ for the modes $j=2,\ldots,N$. By using
(\ref{prop411_b}) of Proposition \ref{prop411} for $\xi_j$ for
$j=2,\ldots,N$, we readily determine the following two equivalent
identities for $\hat{\xi}_j$, as defined in
(\ref{small:hj_theta_2}):
\begin{equation}\label{xi:add}
  \hat{\xi}_{j} =\csc^{2}\left(\frac{\theta}{N}\right)
   \left[\cos\left(\frac{2\theta}{N}\right)-
     \cos\left(\frac{\pi (j-1)}{N}\right)\right]  =
   -2 + 2 \sin^2\left(\frac{\pi (j-1)}{2N}\right)
   \csc^{2}\left(\frac{\theta}{N}\right)\,, \quad j=2,\ldots,N\,.
\end{equation}
For $d_1\in {\mathcal T}_e$ we have that $\theta<\theta_N:={N\pi/2}$
but with $\theta\neq \theta_m:={m\pi/2}$ for $m=1,\ldots,N-1$.  For
any $m=1,\ldots,N-1$, when $d_1\to d_{1Tm}$, or equivalently when
$\theta\to \theta_m$, we conclude from the first identity in
(\ref{xi:add}) that $\hat{\xi}_{m+1}$ vanishes.  As a result, we
observe from (\ref{small:hj_theta_1}) that $\hat{h}_{m+1}$ has an
apparent singularity as $\theta\to\theta_m$, which will require the
evaluation of a singular ${0/0}$ limit. However, by a further
analytical simplification of $\hat{h}_j$, as summarized below in Lemma
\ref{small:explicit}, we can show that this singularity at
$\theta=\theta_m$ is removable.

\begin{lemma}\label{small:explicit} On the range $\theta<\theta_N:={N\pi/2}$,
  we have for $j=2,\ldots,N$ that
  $\lambda_j=-2\epsilon^3\beta_0\bar\chi v_{\max 0}^3{h_j/3}$, where $h_j$
  is given explicitly by
  \begin{equation}\label{hj:explicit}
     h_j = \frac{\theta^3}{\bar{u}}\csc\left(\frac{2\theta}{N}
    \right) \sin^{2}\left(\frac{\pi(j-1)}{2N}\right)
    \frac{ \left[1-a_1 - (1+a_1)\cos\left({2\theta/N}\right)\right]}
    { \left[1 + a_1 \cos\left({\pi(j-1)/N}\right)
        - (1+a_1)\cos\left({2\theta/N}\right)\right]} \,.
  \end{equation}
  It follows that $\lambda_j\to -\infty$ as $\theta\to \theta_N^{-}$.
  In addition, for all $j=2,\ldots,N$, we have $\lambda_j<0$ on the range
  $\theta_{sN}<\theta<\theta_{N}$, where the simultaneous
  zero-crossing threshold $\theta_{sN}$ satisfies
  \begin{equation}\label{hj:zero}
    \theta_{sN}:=\frac{N}{2} \arccos\left( \frac{1-a_1}{a_1+1}\right) \,,
    \quad \mbox{where} \quad a_{1}= \frac{1}{3}\left(\bar{\chi} v_{\max 0}-2
    \right)\,.
  \end{equation}
  Finally, $\lambda_j$ is continuous and satisfies $\lambda_j>0$ for
  all $j=2,\ldots,N$ on the range $\theta_{cN}<\theta<\theta_{sN}$,
  where
\begin{equation}\label{hj:asymp}
  \theta_{cN}:=\frac{N}{2} \arccos\left( \frac{1-a_1\cos\left({\pi/N}\right)
      }{a_1+1}\right) \,.
\end{equation}
This threshold $\theta_{cN}$ is the value of $\theta$ for which
$\hat{h}_N$ has a vertical asymptote. As $\theta$ is decreased below
$\theta_{N}$, it is the mode $j=N$ that first has a vertical
asymptote.  Written in terms of $d_1$, this vertical asymptote is
equivalent to the approximation $d_{1cN}^{\star}$, given in
(\ref{d1d1cj}), for the competition instability threshold associated
with the large eigenvalues when $\tau=0$.
\end{lemma}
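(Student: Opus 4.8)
The plan is to first establish the closed form (\ref{hj:explicit}) by a direct algebraic reduction of $\hat h_j$ in (\ref{small:hj_theta_1}), and then to read off every claimed sign, continuity and asymptotic statement from the two factors of (\ref{hj:explicit}) that are capable of changing sign. For the first step I would set $c:=\cos(2\theta/N)$ and $p_j:=\cos(\pi(j-1)/N)$, and use the half-angle identities $\csc^2(\theta/N)=2/(1-c)$, $1+c=2\cos^2(\theta/N)$, $\cot(\theta/N)/(1+c)=\csc(2\theta/N)$, together with $\sin^2(\pi(j-1)/N)=(1-p_j)(1+p_j)$ and $\sin^2(\pi(j-1)/(2N))=(1-p_j)/2$. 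The first identity in (\ref{xi:add}) gives $\hat\xi_j=2(c-p_j)/(1-c)$, whence $2+\hat\xi_j=2(1-p_j)/(1-c)$ and $1-a_1\hat\xi_j/2=(1-(1+a_1)c+a_1p_j)/(1-c)$. Substituting these into $\hat h_j$, clearing the common factor $1-c$ and factoring the numerator of the bracket produces an explicit factor $(c-p_j)\,[(1-a_1)-(1+a_1)c]$; multiplying by $1/\hat\xi_j=(1-c)/(2(c-p_j))$ cancels both $(c-p_j)$ and $1-c$, and inserting the result into $h_j=\tfrac{\theta^3}{2\bar u}\cot(\theta/N)\hat h_j$ and using $\cot(\theta/N)/(1+c)=\csc(2\theta/N)$ and $(1-p_j)/2=\sin^2(\pi(j-1)/(2N))$ yields exactly (\ref{hj:explicit}). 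The same cancellation of $(c-p_j)$ shows that the apparent $0/0$ singularity of $\hat h_j$ at $\theta=\theta_m$ (where $\hat\xi_{m+1}=0$, i.e. $c=p_{m+1}$) is removable, the reduced form there having denominator $(1+p_{m+1})(1-p_{m+1})=\sin^2(\pi m/N)\neq 0$ for $m=1,\ldots,N-1$.

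Granted (\ref{hj:explicit}), the prefactor $\tfrac{\theta^3}{\bar u}\csc(2\theta/N)\sin^2(\pi(j-1)/(2N))$ is strictly positive for every $j=2,\ldots,N$ on the whole interval $0<\theta<\theta_N=N\pi/2$ (since $2\theta/N\in(0,\pi)$), so the sign of $h_j$, hence the stability of mode $j$, is controlled by $N_0(\theta)/D_j(\theta)$ with $N_0:=1-a_1-(1+a_1)c$ \emph{independent of} $j$ and $D_j:=1+a_1p_j-(1+a_1)c=(1+a_1)(c_j^{\star}-c)$, where $c_j^{\star}:=(1+a_1p_j)/(1+a_1)$. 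Since $a_1=(\bar\chi v_{\max 0}-2)/3\gg 1$, the single factor $N_0$ vanishes exactly at the simultaneous crossing $\theta=\theta_{sN}$ of (\ref{hj:zero}), where $c=c_{sN}:=(1-a_1)/(1+a_1)\in(-1,0)$, with $N_0>0$ for $\theta_{sN}<\theta<\theta_N$ and $N_0<0$ for $\theta<\theta_{sN}$. Because $p_j$ is strictly decreasing in $j$ on $j=2,\ldots,N$, the smallest threshold is $c_N^{\star}=(1-a_1\cos(\pi/N))/(1+a_1)$, and one checks $-1<c_{sN}<c_N^{\star}$, so that $\theta_{cN}<\theta_{sN}<\theta_N$, where $\theta=\theta_{cN}$ of (\ref{hj:asymp}) is precisely where $c=c_N^{\star}$. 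On $(\theta_{sN},\theta_N)$ one has $c<c_{sN}<c_j^{\star}$ for all $j$, hence $D_j>0$ and $h_j$ has the sign of $N_0/D_j>0$, i.e. $\lambda_j<0$; as $\theta\to\theta_N^-$, $c\to-1$, $N_0\to 2$, $D_j\to 2+a_1(1+p_j)\ge 2$ and $\csc(2\theta/N)\to+\infty$, so $h_j\to+\infty$ and $\lambda_j\to-\infty$. On $(\theta_{cN},\theta_{sN})$ one has $c_{sN}<c<c_N^{\star}\le c_j^{\star}$, so $N_0<0$ while $D_j>0$ for every $j$, and $\csc(2\theta/N)$ is finite and nonzero, so each $h_j$ is continuous there with $h_j<0$, i.e. $\lambda_j>0$. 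Finally, at $\theta=\theta_{cN}$ the factor $D_N$ vanishes while $N_0$, $\sin^2(\pi(N-1)/(2N))=\cos^2(\pi/(2N))$ and $\csc(2\theta_{cN}/N)$ do not, so $\hat h_N$ (equivalently $h_N$) has a genuine vertical asymptote there; it is the first such asymptote met as $\theta$ decreases from $\theta_N$ because $c_N^{\star}$ is the smallest of the $c_j^{\star}$. Substituting $\theta=\sqrt{\mu\bar u/d_1}$ converts $\theta=\theta_{cN}$ into $d_1=\mu\bar u/\theta_{cN}^2=4\mu\bar u/\bigl(N^2[\arccos((1-a_1\cos(\pi/N))/(1+a_1))]^2\bigr)=d_{1cN}^{\star}$ of (\ref{d1d1cj}).

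The only real obstacle is the bookkeeping in the reduction to (\ref{hj:explicit}); everything afterwards is an elementary sign chase using only $a_1\gg 1$ and the monotonicity of $\cos(\pi(j-1)/N)$ in $j$, together with the fact that $\csc(2\theta/N)>0$ throughout $(0,\theta_N)$. As a consistency check I would verify the degenerate case $N=2$ (only $j=2$, with $p_2=\cos(\pi/2)=0$ and $2\theta/N=\theta$), for which (\ref{hj:explicit})–(\ref{hj:asymp}) collapse to easily-checked expressions.
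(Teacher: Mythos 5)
Your proof is correct and follows essentially the same route as the paper: the algebraic reduction to (\ref{hj:explicit}) by cancelling the common factor $\cos(2\theta/N)-\cos(\pi(j-1)/N)$ (your $c-p_j$, the paper's $\cos(2\varphi)-\cos(2b)$) is the same computation in a cosmetically different parametrization, and the sign, continuity, and asymptote statements are then read off from the numerator and denominators of (\ref{hj:explicit}) in both. Your treatment of the sign-chase (introducing $N_0$ and $D_j$ and locating $c_{sN}$ relative to the $c_j^\star$) is a bit more explicit than the paper's terse concluding sentence, but it is the same argument.
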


\begin{proof}
  We first derive (\ref{hj:explicit}). In the proof it is convenient
  to label $\varphi:={\theta/N}$ and $b:={\pi(j-1)/(2N)}$, so on
  $0<\theta<\theta_N$, and for $j=2,\ldots,N$, we have $0<\varphi<{\pi/2}$
  and $0<b<{\pi/2}$. In terms of $\varphi$ and $b$, (\ref{xi:add}) becomes
  \begin{equation}\label{xi:add_new}
    \hat{\xi}_j=\frac{\left[\cos(2\varphi)-\cos(2b)\right]}{\sin^{2}(\varphi)}
    =-2+ \frac{2\sin^{2}(b)}{\sin^{2}(\varphi)} \,; \qquad
    1-\hat{\xi}_j \frac{a_1}{2}= \frac{1}{\sin^2(\varphi)}\left[\sin^2(\varphi) 
      -\frac{a_1}{2}\left(\cos(2\varphi)-\cos(2b)\right) \right]\,.
   \end{equation}
   By substituting (\ref{xi:add_new}) into (\ref{small:hj_theta_1}), we
   obtain that
  \begin{align*}
    \hat{h}_j &= \frac{2\sin^{2}(\varphi) \sin^{2}(b)}
                {\left[\cos(2\varphi)-\cos(2b)\right]}
   \frac{ \left[ 2-\hat{\xi}_j a_1 - 2 {\sin^{2}(\varphi)\sin^2(2b)/
   \left(\sin^2(2\varphi)\sin^2(b)\right)} \right]}{\left[2\sin^2(\varphi) -
                a_1 \left(\cos(2\varphi)-\cos(2b)\right) \right]} \,.
  \end{align*}
  Next, we use ${\sin(2\omega)/\sin{\omega}}=2\cos{\omega}$ to simplify
  the trigonometric ratio in the numerator of $\hat{h}_j$ to get
  \begin{equation*}
    \hat{h}_j = \frac{2\sin^{2}(\varphi) \sin^{2}(b)}{\cos^{2}(\varphi)
                \left[\cos(2\varphi)-\cos(2b)\right]}
              \frac{ \left[ 2\left(\cos^2(\varphi)-\cos^2(b)\right) -
                  \hat{\xi}_j a_1 \cos^2(\varphi)        \right]}
              {\left[2\sin^2(\varphi) - a_1 \left(\cos(2\varphi)-\cos(2b)\right)
                \right]} \,.
  \end{equation*}
  Upon using $\cos^2(\varphi)-\cos^2(b)={[\cos(2\varphi)-\cos(2b)]/2}$
  together with the first identity for $\hat{\xi}_j$ in
  (\ref{xi:add_new}), we can cancel the common factor
  $\cos(2\varphi)-\cos(2b)$ from the numerator and denominator of
  $\hat{h}_j$, which leaves
  \begin{equation}\label{xj:hj_1}
    \hat{h}_j = \frac{2\sin^{2}(b)\left[\sin^2(\varphi) - a_1\cos^2(\varphi)
      \right]}
    {\cos^2(\varphi)\left[2\sin^2(\varphi) - a_1
        \left(\cos(2\varphi)-\cos(2b)\right) \right]}
  =   \frac{\sin^{2}(b)}{\cos^2(\varphi)}
    \frac{\left[1-a_1-(a_1+1)\cos(2\varphi)\right]}{
              \left[1+ a_1 \cos(2b) - (a_1+1) \cos(2\varphi)\right]} \,.
\end{equation}
Finally, we substitute (\ref{xj:hj_1}) into (\ref{small:hj_theta_1}) and
use $\sin^{2}(b){\cot(\varphi)/\cos^{2}(\varphi)}=2{\sin^2(b)/\sin(2\varphi)}$.
Upon recalling
the definition of $\varphi$ and $b$ we readily obtain the explicit result
(\ref{hj:explicit}).

Next, we let $\theta\to\theta_{N}^{-}$ for which $\cos(2\theta/N)\to-1$
and $\csc(2\theta/N)\to +\infty$. It readily follows from
(\ref{hj:explicit}) that for each $j=2,\ldots,N$, we have
$h_j\to+\infty$ as $\theta\to\theta_{N}^{-}$ when $a_1>0$.  Since
$v_{\max 0}\gg 1$ when $\epsilon \ll 1$, $a_1>0$ must hold. This implies
that $\lambda_j\to -\infty$ as $\theta\to\theta_N^{-}$.  Finally, the
zero-eigenvalue crossing threshold (\ref{hj:zero}) and the value of
$\theta$ for the mode $j=N$ that yields the first vertical asymptote
(\ref{hj:asymp}) as $\theta$ is decreased, are both readily identified
from the numerator and denominators in (\ref{hj:explicit}),
respectively.
\end{proof}

\begin{figure}[h!]
\centering
\begin{subfigure}[t]{0.5\textwidth}
  \includegraphics[width=\linewidth,height=5.0cm]{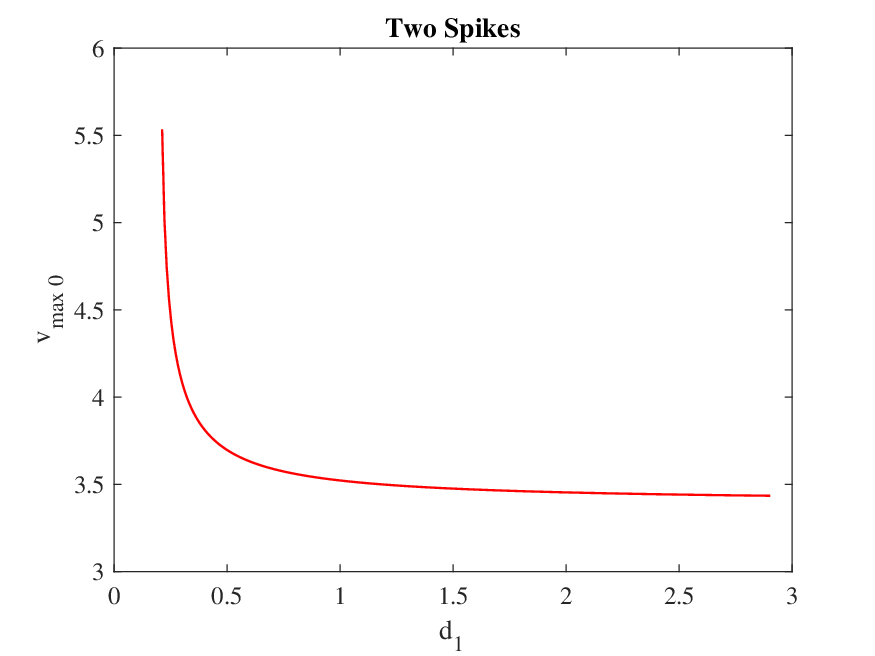}
     \caption{$v_{\max 0}$ versus $d_1$}
     \label{twospikemode2subfig1new}
\end{subfigure}\hspace{-0.25in}
\begin{subfigure}[t]{0.5\textwidth}
  \includegraphics[width=\linewidth,height=5.0cm]{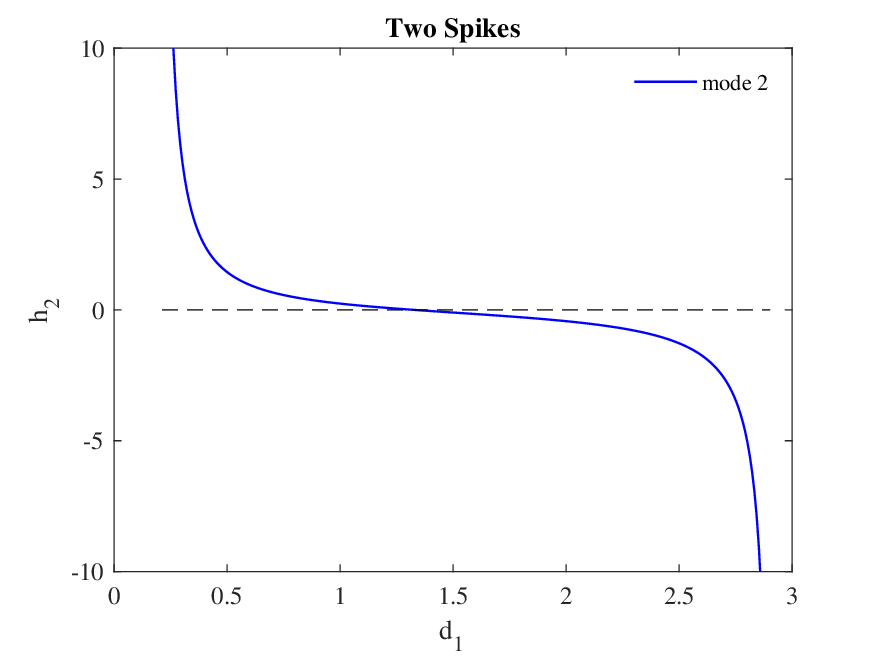}
    \caption{$h_2$ versus $d_1$}
    \label{twospikesmode2subfig2new}
\end{subfigure}
\caption{\fontsize{11pt}{9pt}\selectfont { {\em Numerically computed
      results for $v_{\max 0}$ and $h_2$ on the range
      $d_{1p2}<d_1<d_{1c2}^{\star}$ when $N=2$, $\bar\chi=1$,
      $\bar u=2$, $d_2=0.0004=\epsilon^2$, and $\mu=1$.}}  Here
      $d_{1p2}\approx 0.2$ and $d_{1c2}^{\star}\approx 2.91$. Left:
      $v_{\max 0}$ is monotone decreasing in $d_1$. Right:
      $h_2$ slowly decreases and crosses zero at
      $d_1\approx d_{1s2}\approx 1.61$.}
\label{figuretwospikemode2new}
\vspace{-0.0in}
\end{figure}

To illustrate the implication of Lemma \ref{small:explicit} for $N=2$
as $d_1$ is varied, in the left and right panels of Figure
\ref{figuretwospikemode2new} we plot $v_{\max 0}$ and $h_2$ versus
$d_1$ as computed from (\ref{singlevmax}) and (\ref{hj:explicit}),
respectively.  We conclude from Figure \ref{figuretwospikemode2new}
that the two-spike steady-state is unstable with respect to the small
eigenvalue with mode $m=2$ when
$1.61\approx d_{1s2}<d_1<d_{1c2}^{\star}\approx 2.91$, but is linearly
stable on the range $0.20\approx d_{1p2}<d_1<d_{1s2}\approx
1.61$. Similar results are shown in Figure
\ref{figurethreespikemode23new} for $N=3$ for the same parameter set.
We conclude that a three-spike steady-state is unstable with respect
to the small eigenvalue modes $j=2$ and $j=3$ on the range
$0.70 \approx d_{1s3}<d_1<d_{1c3}^{\star}\approx 0.97$. On the range
$0.09 \approx d_{1p3}<d_1<d_{1s3}\approx 0.70$, the three-spike
steady-state is linearly stable for all the small eigenvalues.

\begin{figure}[h!]
\centering
\begin{subfigure}[t]{0.33\textwidth}
    \includegraphics[width=1.0\linewidth,height=6.5cm]{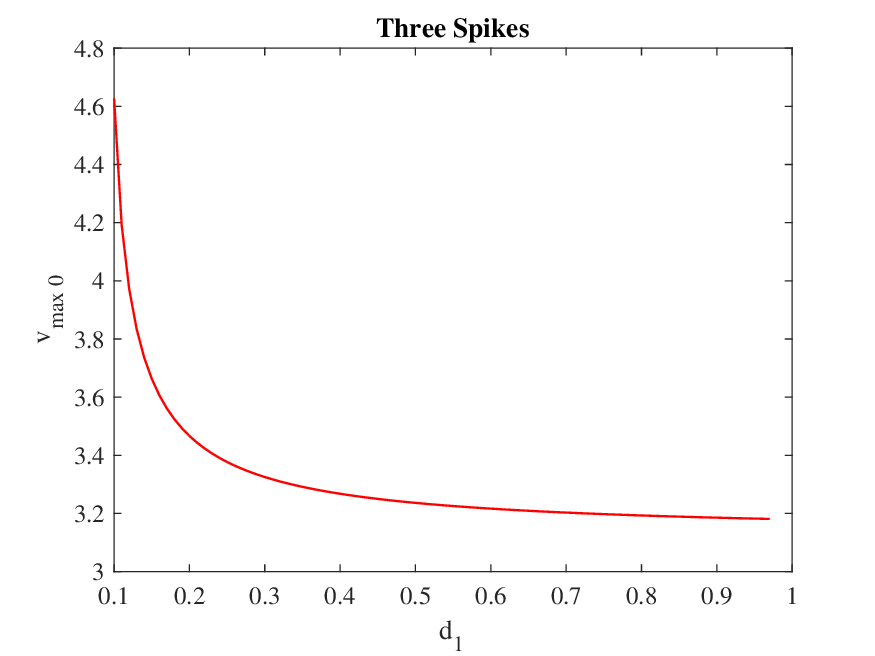}
     \caption{$v_{\max 0}$ versus $d_1$}
     \label{threespikemode2subfig1new}
\end{subfigure}\hspace{-0.25in}
\begin{subfigure}[t]{0.33\textwidth}
  \includegraphics[width=1.0\linewidth,height=6.5cm]{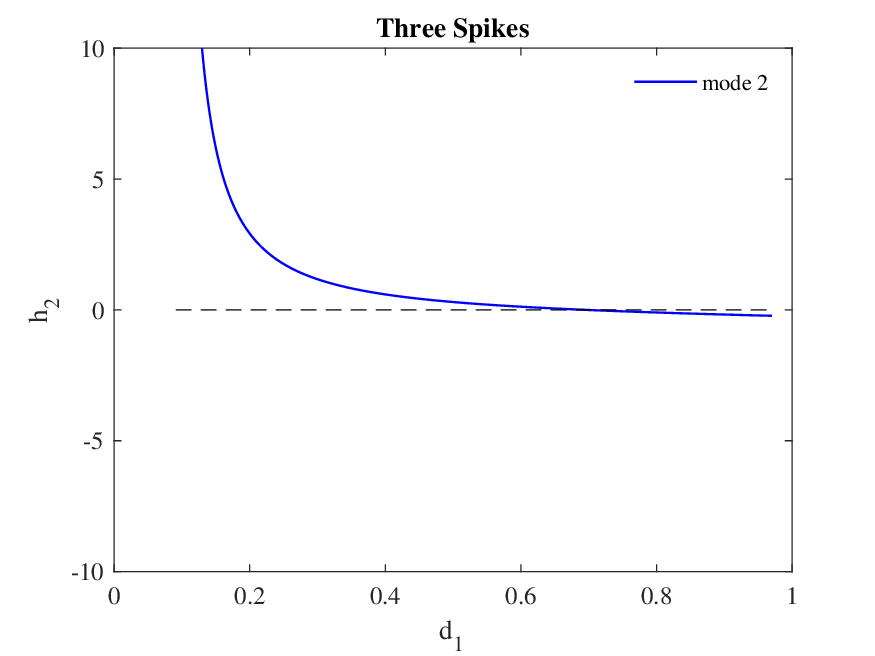}
    \caption{$h_2$ versus $d_1$}
    \label{threespikesmode2subfig2new}
\end{subfigure}
\hspace{-0.25in}
\begin{subfigure}[t]{0.33\textwidth}
  \includegraphics[width=1.0\linewidth,height=6.5cm]{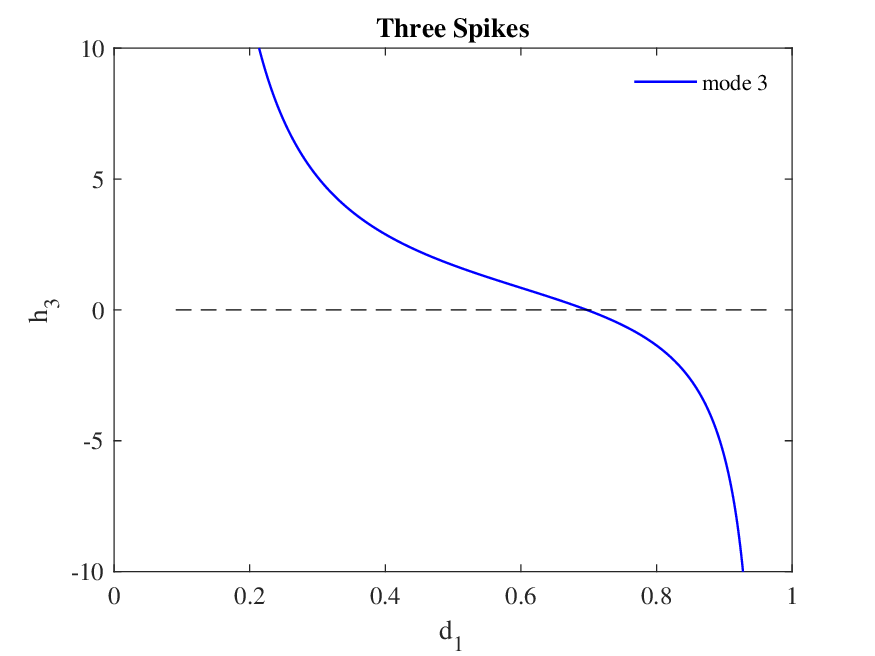}
    \caption{$h_3$ versus $d_1$}
    \label{threespikesmode2subfig3new}
\end{subfigure}
\caption{\fontsize{11pt}{9pt}\selectfont { {\em
Numerically computed
      results for $v_{\max 0}$, $h_2$, and $h_3$  on the range
      $d_{1p3}<d_1<d_{1c3}^{\star}$ when $N=3$, $\bar\chi=1$,
      $\bar u=2$, $d_2=0.0004=\epsilon^2$, and $\mu=1$.}}  Here
  $d_{1p3}\approx 0.09$ and $d_{1c3}^{\star}\approx 0.97$. Left:
  $v_{\max 0}$ is monotone decreasing in $d_1$. Middle and Right:
  $h_2$ and $h_3$ slowly decrease as
  $d_1$ increases and the simultaneous zero crossing occurs at
  $d_{1s3}\approx 0.70$.}
\label{figurethreespikemode23new}
\vspace{-0.0in}
\end{figure}

In summary, in terms of $d_1$, Lemma \ref{small:explicit} shows that
an $N$-spike steady-state solution loses translation stability to
$N-1$ possible modes when $d_{1}$ increases above a critical threshold
$d_{1sN}$. In this way, we obtain our main linear stability result for
$N$-spike steady-state solutions of (\ref{timedependent}).

\begin{proposition}\label{main:stab} For $\tau=0$ and
  $\epsilon\to 0$, an $N$-spike steady-state solution of (\ref{timedependent})
  is linearly stable to both the large and small eigenvalues of the
  linearization when
  \begin{equation}\label{main:p1}
    d_{1pN} < d_{1} < d_{1sN} \,, \quad \mbox{where} \quad
    d_{1pN}=\frac{4\mu\bar{u}}{N^2\pi^2} \,, \quad d_{1sN}:=\frac{4\mu\bar{u}}
    {N^2 \left(\arccos\left(\frac{1-a_1}{1+a_1}\right)\right)^2} \,.
  \end{equation}
  Here $a_1$ is defined in (\ref{hj:zero}). The steady-state is
  unstable to $N-1$ modes of instability for the small eigenvalues,
  but is linearly stable with respect to the large eigenvalues
  when $d_{1sN}<d_1<d_{1cN}^{\star}$. Finally, when
  $d_{1}>d_{1cN}^{\star}$, the steady-state is unstable with respect
  to both the large and small eigenvalues.
 \end{proposition}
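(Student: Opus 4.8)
The plan is to prove Proposition \ref{main:stab} by combining the large-eigenvalue conclusion of Proposition \ref{prop33} with the small-eigenvalue conclusions of \S\ref{sec4}, and then intersecting the two stability windows in $d_1$. First I would recall from the stability formulation of \S\ref{sec2} that the discrete spectrum of \eqref{lep} splits into the large ${\mathcal O}(1)$ eigenvalues and the small $o(1)$ eigenvalues, so that for $\tau=0$ the $N$-spike steady-state is linearly stable precisely when both families lie in $\mathrm{Re}(\lambda)<0$. The case $N=1$ is then immediate: Proposition \ref{prop33} gives $d_{1c1}=\infty$, so there is no large-eigenvalue instability at any ${\mathcal O}(1)$ diffusivity, and Proposition \ref{prop:one-spike} gives stability to the lone small eigenvalue for all $d_1>d_{1p1}$; hence the substantive work is for $N\geq 2$.

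For the large eigenvalues I would invoke Proposition \ref{prop33}: for $d_1\in{\mathcal T}_e$ and $\tau=0$ one has $\mathrm{Re}(\lambda_0)<0$ iff $d_1<d_{1cN}$, where $d_{1cN}$ is built from $a=\bar\chi v_{\max 0}/3-1/2$. The threshold $d_{1cN}^\star$ in \eqref{d1d1cj} is built from $a_1=\bar\chi v_{\max 0}/3-2/3$, so $a$ and $a_1$ share the leading term $\bar\chi v_{\max 0}/3$ and differ only by the ${\mathcal O}(1)$ constant $1/6$. Since $v_{\max 0}={\mathcal O}(|\log\epsilon|)\gg 1$ by \eqref{vm:dominant}, a short expansion of $\arccos\big((1-a\cos(\pi/N))/(a+1)\big)$ in powers of $1/a$ shows that $d_{1cN}$ and $d_{1cN}^\star$ agree at leading order and at the retained ${\mathcal O}(v_{\max 0}^{-1})$ correction, their difference being only ${\mathcal O}(v_{\max 0}^{-2})$; this is the content of Remark \ref{remark:non_invert}, which also ties $d_{1cN}^\star$ to the non-invertibility of the Jacobian of the algebraic system and of the matrix ${\mathcal M}$. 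Consequently, to the asymptotic order at which Proposition \ref{main:stab} is stated, the large-eigenvalue stability boundary may be taken to be $d_{1cN}^\star$.

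For the small eigenvalues I would use the explicit formulas of \S\ref{sec4}. Mode $j=1$ has $\lambda_1=-\frac{2}{3}\epsilon^3\beta_0\bar\chi v_{\max 0}^3 h_1$ with $h_1=(\theta^3/\bar u)\csc(2\theta/N)>0$ throughout ${\mathcal T}_e$ by \eqref{splot:h1}, so this mode is always stable. For the modes $j=2,\ldots,N$, Lemma \ref{small:explicit} supplies the closed-form rational-trigonometric expression \eqref{hj:explicit} for $h_j$, from which I would read off three facts: the apparent singularities at $d_1=d_{1Tm}$ are removable, so $\lambda_j$ extends continuously across ${\mathcal T}_e$; $h_j>0$ on $\theta_{sN}<\theta<\theta_N$; and $h_j<0$ on $\theta_{cN}<\theta<\theta_{sN}$. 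Translating $\theta=\sqrt{\mu\bar u/d_1}$ back to $d_1$ gives the correspondences $\theta_N\leftrightarrow d_{1pN}$, $\theta_{sN}\leftrightarrow d_{1sN}$, $\theta_{cN}\leftrightarrow d_{1cN}^\star$; and since $\cos(\pi/N)\in[0,1)$ for $N\geq 2$ the arguments of the two $\arccos$ obey $(1-a_1\cos(\pi/N))/(1+a_1)\geq(1-a_1)/(1+a_1)>-1$, whence $\theta_{cN}<\theta_{sN}<\theta_N$, equivalently $d_{1pN}<d_{1sN}<d_{1cN}^\star$ with all three interior to ${\mathcal T}_e$. Thus all small eigenvalues are negative on $d_{1pN}<d_1<d_{1sN}$, while on $d_{1sN}<d_1<d_{1cN}^\star$ exactly the $N-1$ modes $j=2,\ldots,N$ carry a positive eigenvalue.

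Finally I would assemble the three regimes. On $d_{1pN}<d_1<d_{1sN}$: $d_1<d_{1sN}<d_{1cN}^\star$, so the large eigenvalues are stable, and all small eigenvalues are stable, giving full linear stability. On $d_{1sN}<d_1<d_{1cN}^\star$: the large eigenvalues remain stable while the $N-1$ small modes $j=2,\ldots,N$ are unstable. For $d_1>d_{1cN}^\star$: Proposition \ref{prop33} (via the identification of $d_{1cN}^\star$ with $d_{1cN}$ established above) gives a large-eigenvalue competition instability, and continuing \eqref{hj:explicit} for $j=N$ past its vertical asymptote at $\theta=\theta_{cN}$ flips the sign of $h_N$, so $\lambda_N>0$ as well, hence instability to both families. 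I expect the main obstacle to be the careful reconciliation of the two competition thresholds: verifying that, at the order to which the spike construction and the spectral reductions are valid, the large-eigenvalue zero-crossing $d_{1cN}$, the vertical asymptote $d_{1cN}^\star$ of the small-eigenvalue mode $j=N$, and the non-invertibility point of ${\mathcal M}$ all coincide; a secondary point is confirming that $d_{1sN}$ and $d_{1cN}^\star$ do not collide with any excluded resonant value $d_{1Tm}$, so that they genuinely lie in ${\mathcal T}_e$ and the continuity of $\lambda_j$ from Lemma \ref{small:explicit} applies on the whole window.
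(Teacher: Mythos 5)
Your proposal matches the paper's intended argument: Proposition~\ref{main:stab} is a bookkeeping result that intersects the large-eigenvalue window of Proposition~\ref{prop33} (with $d_{1cN}$ identified to asymptotic order with $d_{1cN}^{\star}$, as you correctly verify by noting $a-a_1=1/6$ and the ${\mathcal O}(1/a^2)$ sensitivity of $\arccos(\eta_N)$) with the small-eigenvalue window of Lemma~\ref{small:explicit}, and the $\theta\leftrightarrow d_1$ translation and ordering $\theta_{cN}<\theta_{sN}<\theta_N$ give $d_{1pN}<d_{1sN}<d_{1cN}^{\star}$ exactly as the paper assembles the three regimes.

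There is, however, one slip in your final step that you should repair.  You claim that continuing \eqref{hj:explicit} past the vertical asymptote at $\theta=\theta_{cN}$ ``flips the sign of $h_N$, so $\lambda_N>0$.''  But for $\theta<\theta_{cN}$ \emph{both} the numerator $1-a_1-(1+a_1)\cos(2\theta/N)$ and the $j=N$ denominator $1-a_1\cos(\pi/N)-(1+a_1)\cos(2\theta/N)$ are negative, so formally $h_N>0$ there, which with $\lambda_j=-\tfrac{2}{3}\epsilon^3\beta_0\bar\chi v_{\max 0}^3 h_j$ would give $\lambda_N<0$, the opposite of what you wrote.  More importantly, the continuation is not legitimate: Proposition~\ref{prop:small} is stated only for $d_1<d_{1cN}^{\star}$, and Remark~\ref{remark:non_invert} points out that $\bigl(I+\tfrac{3\zeta_0}{\bar\chi a_g v_{\max 0}}{\mathcal G}\bigr)^{-1}$, and hence ${\mathcal M}$, cease to exist at $d_1=d_{1cN}^{\star}$.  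For $d_1>d_{1cN}^{\star}$ the correct and sufficient argument is that the competition instability of the large eigenvalues (Proposition~\ref{prop33}, via the identification $d_{1cN}\approx d_{1cN}^{\star}$) already destabilizes the pattern; the paper's additional assertion about the small eigenvalues in that regime is outside the reach of the matrix reduction and should not be ``proved'' by extending \eqref{hj:explicit} beyond its domain of validity.
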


 In Appendix \ref{app:asymmetric} we show that the simultaneous
 zero-eigenvalue crossing threshold $\theta_{sN}$ for the small eigenvalues
 occurs precisely at the critical threshold where asymmetric steady-state
 solutions bifurcate from the symmetric steady-state solution branches
 constructed in \S \ref{sec2}.

\begin{figure}[h!]
\centering
\begin{subfigure}[t]{0.45\textwidth}
    \includegraphics[width=\linewidth,height=7.0cm]{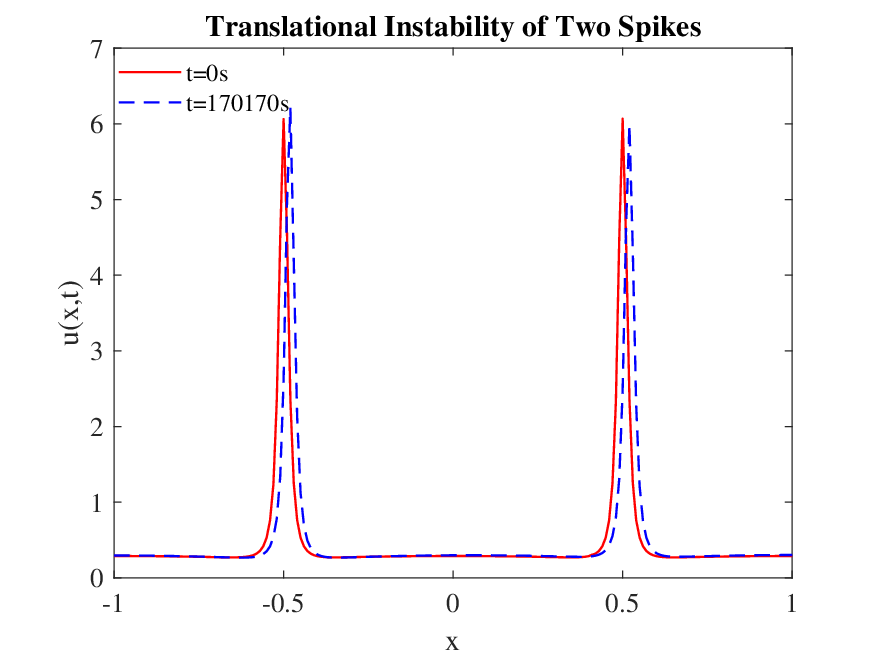}
    \caption{dynamics of $u$, slow drift}
  \end{subfigure}\hspace{-0.25in}
\begin{subfigure}[t]{0.45\textwidth}
  \includegraphics[width=\linewidth,height=7.0cm]{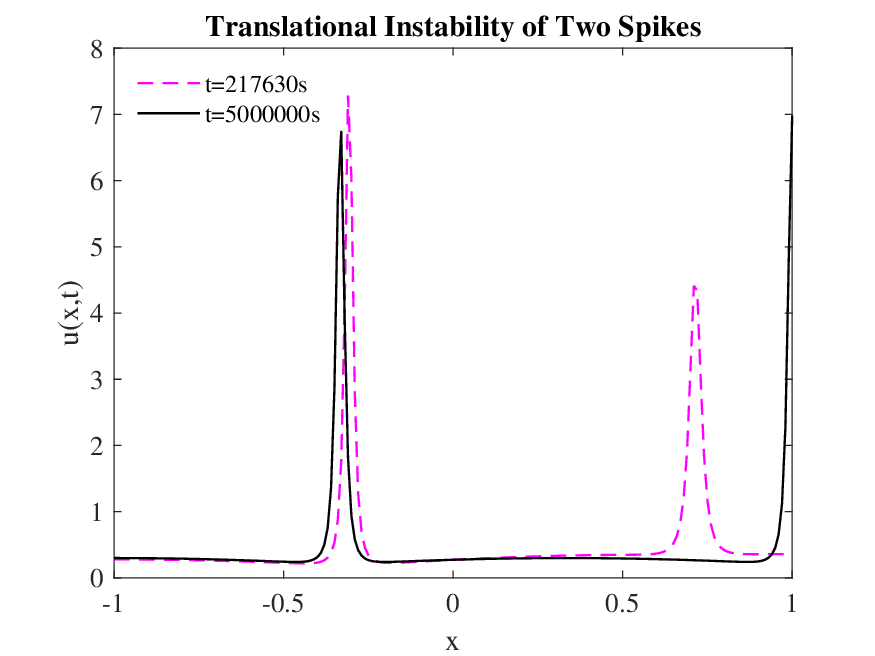}
  \caption{dynamics of $u$, boundary collapse}
\end{subfigure}
\caption{\fontsize{11pt}{9pt}\selectfont { {\em Full PDE simulations
      of (\ref{timedependent}) using FlexPDE7 \cite{flex2021} illustrating
      a translational instabilitiy of a two-spike steady-state when
      $d_1=1.6$, and the long-time behavior for $\bar\chi=1$, $\bar u=2$,
      $d_2=0.0004$ and $\mu=1$.}} Left: snapshots of $u$ at two times
  showing the initial slow motion of a two-spike quasi-equilibrium.
  Right: long time dynamics leads to a final steady-state with an
  interior and a boundary spike.}
\label{dynamicstranslationalinstability}
\end{figure}

In Figure \ref{dynamicstranslationalinstability} we show FlexPDE7
simulations of (\ref{timedependent}) that illustrates a translation
instability for a two-spike pattern when $d_1$ is on the range
$d_{1s}<d_1<d_{1c2}^{\star}$ for the parameter set in the caption of Figure
\ref{figuretwospikemode2new}. For these values, the interior two-spike
steady-state is unstable to the mode $j=2$ small eigenvalue.  The
resulting long-time dynamics leads to a final steady-state that has an
interior and a boundary spike.

Finally, for an otherwise identical parameter set, in Figure
\ref{figurenucleation} we show FlexPDE7 numerical results for
(\ref{timedependent}) for an initial two-spike quasi-steady state
solution as the cellular diffusivity $d_1$ is slowly decreased in time
below the threshold $d_{1p2}$ for which the base-state is unstable to
a Turing instability. This figure illustrates that the instability of
the base state leads to the nucleation of boundary spikes at each
endpoint together with the creation of a new interior spike. The
analysis of this spike nucleation behavior is beyond the scope of this
paper.

\begin{figure}[h!]
\centering
\begin{subfigure}[t]{0.33\textwidth}
    \includegraphics[width=1.0\linewidth,height=7.0cm]{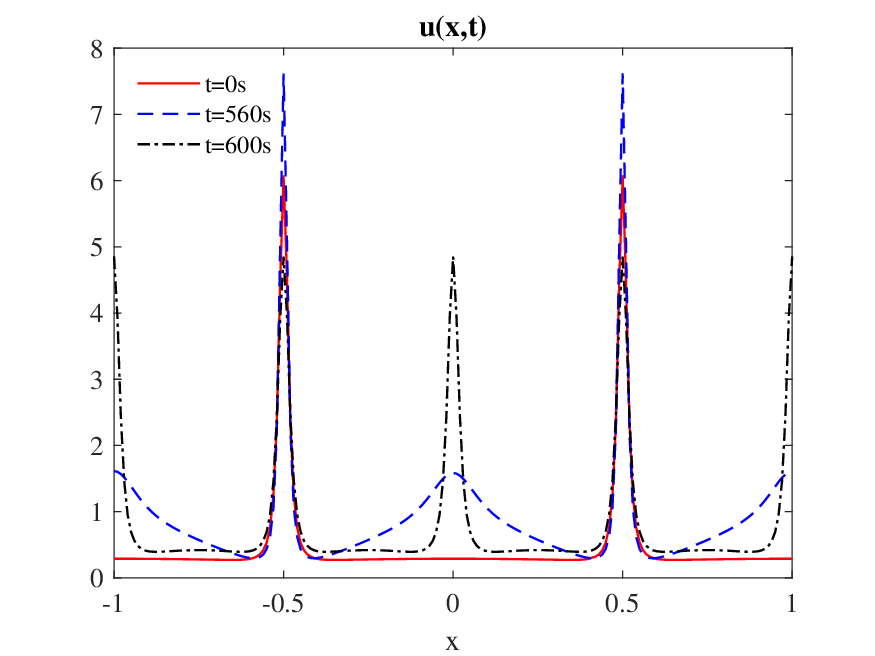}
     \caption{dynamics of $u$}\label{u1:nuc}
\end{subfigure}\hspace{-0.2in}
\begin{subfigure}[t]{0.33\textwidth}
  \includegraphics[width=1.0\linewidth,height=7.0cm]{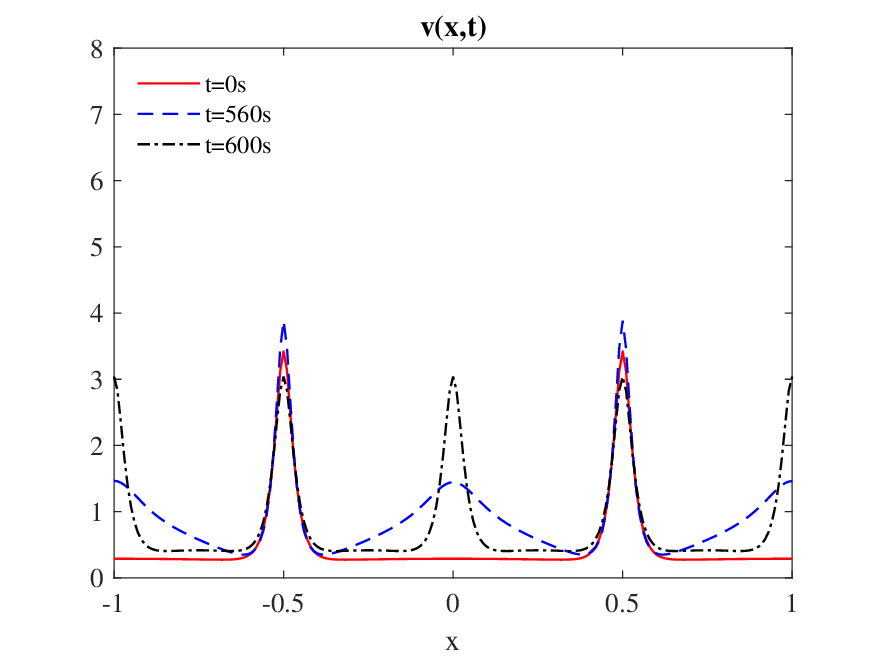}
    \caption{dynamics of $v$}\label{v1:nuc}
\end{subfigure}
\hspace{-0.2in}
\begin{subfigure}[t]{0.33\textwidth}
  \includegraphics[width=1.0\linewidth,height=7.0cm]{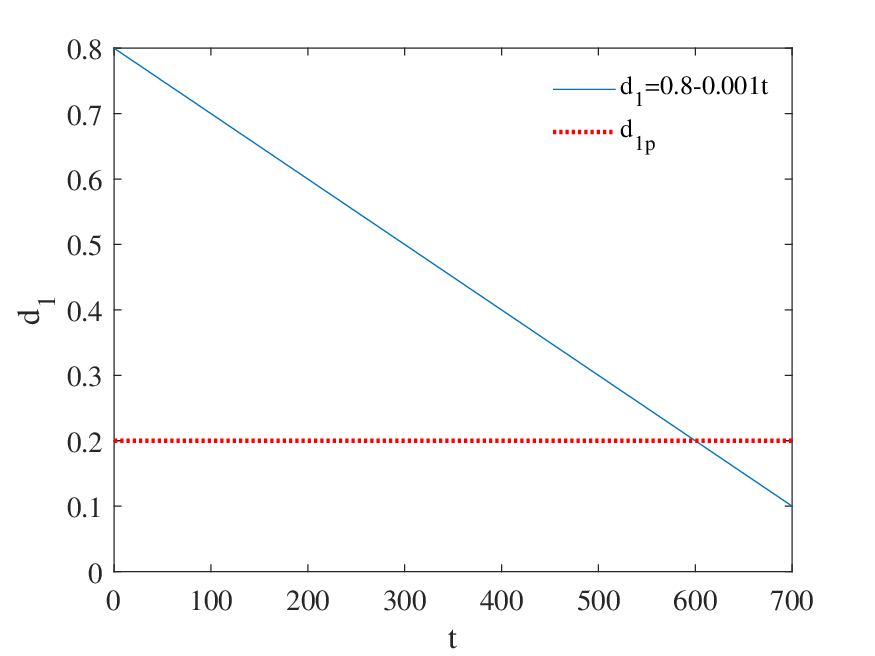}
    \caption{$d_1$ versus $t$}\label{d1:nuc}
\end{subfigure}
\caption{\fontsize{11pt}{9pt}\selectfont { {\em Full PDE simulations
      of (\ref{timedependent}) using FlexPDE7 \cite{flex2021}
      illustrating nucleation behavior for an initial two-spike quasi
      steady-state when $d_1$ is decreased slowly in time.}}  Left and
  Middle: snapshots of $(u,v)$ at three times, showing the spike
  nucleation behavior, with $\bar\chi=1$, $\bar u=2$, $d_2=0.0004$ and
  $\mu=1;$ Right: the diffusivity $d_1$ versus $t$.  As $d_1$
  decreases below $d_{1p2}\approx 0.20$, a new spike is nucleated
  between the two initial spikes and two new boundary spikes are
  created.}
\label{figurenucleation}
\vspace{-0.0in}
\end{figure}

\section{Slow Dynamics of $N$-Spike Quasi-Equilibria}\label{sec5}
Next, we analyze the slow dynamics of an $N$-spike quasi-equilibrium
pattern for (\ref{timedependent}), denoted by $(u_q,v_q)$. Over a long
time-scale, this analysis will characterize how the
spike locations tend to their steady-state values. Similar slow motion
spike dynamics have been derived for other RD systems such as the GM
and Gray-Scott models (\cite{iw2}, \cite{chenwan2011stability},
\cite{dkp}). However, there have been no previous such analyses for
chemotaxis-type RD systems that exhibit slow spike dynamics over
algebraically long time-scales of order ${\mathcal O}(\epsilon^{-p})$
for some $p>0$. In our analysis, we will implicitly assume that the
quasi-equilibrium pattern is linearly stable on ${\mathcal O}(1)$
time-scales, and that the base-state between spikes is linearly stable
in the sense that (\ref{d1:qe}) holds.

Recall that the spatial profile of the $N$-spike quasi-equilibrium
pattern is characterized as in Proposition \ref{prop1}. In this
result, we will now allow the spike locations to depend slowly on time
in that $x_{j}=x_{j}(T)$ where $T=\epsilon^3 t$ is the long time-scale
and with $d_2=\epsilon^2$ in (\ref{timedependent_b}). In the
$j^{\mbox{th}}$ inner region, we introduce the local variables
\begin{equation*}
  y=\epsilon^{-1}[x-x_j(T)]\,, \qquad U(y,T)=
  u\Big(x_j+\epsilon y,\epsilon^{-3} T\Big)\, \qquad
  V(y,T)=v\left(x_j+\epsilon y, \epsilon^{-3} T\right)\,,
\end{equation*}
and we expand the inner solution to (\ref{timedependent}) as
\begin{align}\label{quasiexpansion}
  U(y,T)=U_{0j}[y,x_j(T)]+\epsilon^2 U_{1j}(y,T)+\ldots\,, \qquad
  V(y,T)=V_{0j}[y,x_j(T)]+\epsilon^2 V_{1j}(y,T)+\ldots \,.
\end{align}
Upon substituting \eqref{quasiexpansion} into (\ref{timedependent}),
we obtain from the leading order problem that $(U_{0j},V_{0j})$ satisfy the
core problem (\ref{leading}). Moreover, we obtain from collecting
${\mathcal O}(\epsilon^2)$ terms that, on $-\infty<y<\infty$, $U_{1j}$
and $V_{1j}$ satisfy
\begin{align}\label{residual}
         U_{1j}^{\prime\prime}-\bar\chi\big(U_{0j}V_{1j}^{\prime}\big)^{\prime}-\bar\chi
  \big(U_{1j}V_{0j}^{\prime}\big)^{\prime}+\frac{\mu}{d_1}U_{0j}(\bar u-U_{0j})=0
  \,, \qquad V_{1j}^{\prime\prime}-V_{1j}+U_{1j}=-V_{0j}^{\prime}
         {\dot{x}_j}(T)\,,
\end{align}
where ${\dot{x}_j}(T):=\frac{d}{dT}x_j(T)$ and
${\bar{\chi}}={\chi/d_1}$.  The imposition of a solvability condition
for (\ref{residual}) will yield ${\dot{x}}_j$.

To this end, we decompose $U_{1 j}$ and $V_{1 j}$ into even and odd
parts with respect to $y$ in the form
\begin{align}\label{soldecomposition}
U_{1 j}=U_{1 jE}+U_{1jO}\,, \qquad V_{1}=V_{1 j E}+V_{1jO}\,,
\end{align}
where $U_{1j E}$ (resp. $V_{1jE}$) and $U_{1jO}$ (resp. $V_{1jO}$)
satisfy homogeneous Neumann and Dirichlet boundary conditions at
$y=0,$ respectively.  From substituting (\ref{soldecomposition}) in
(\ref{residual}), we obtain two problems, each defined on
$-\infty<y<\infty$:
\begin{subequations}\label{residualE}
\begin{align}
& U_{1j E}^{\prime\prime}-\bar\chi \big(U_{0j}V^{\prime}_{1jE}\big)^{\prime}-\bar
  \chi (U_{1jE}V_{0j}^{\prime})^{\prime}+\frac{\mu}{d_1}U_{0j}(\bar u-U_{0j})=0\,;
  \qquad U^{\prime}_{1jE}(0)=0 \,, \\
& V_{1jE}^{\prime\prime}-V_{1jE}+U_{1jE}=0\,; \qquad V^{\prime}_{1jE}(0)=0\,,
\end{align}
\end{subequations}
and
\begin{subequations}\label{residualO}
\begin{align}
   &      U_{1jO}^{\prime\prime}-\bar\chi\big(U_{0j}V^{\prime}_{1jO}\big)^{\prime}-
  \bar\chi(U_{1jO}V_{0j}^{\prime})^{\prime}=0\,; \qquad
U_{1jO}(0)=0\,, \\
    &     V_{1jO}^{\prime\prime}-V_{1jO}+U_{1jO}=- V_{0j}^{\prime}
  {\dot{x}_j}(T)\,; \qquad
  V_{1jO}(0)=0\,.
\end{align}
\end{subequations}
Upon defining the functions $g_{1jE}$ and $g_{1jO}$ by
\begin{align}\label{g1def}
  g_{1jE}=\frac{U_{1jE}}{U_{0j}}-\bar\chi V_{1jE}\,, \qquad g_{1jO}=
  \frac{U_{1jO}}{U_{0j}}-\bar\chi V_{1jO}\,,
\end{align}
we can more conveniently rewrite (\ref{residualE}) and (\ref{residualO}) on
$-\infty<y<\infty$ as
\begin{align}\label{divergencg1E}
  \big(U_{0j}g_{1jE}^{\prime}\big)^{\prime}+\frac{\mu}{d_1}U_{0j}
  (\bar u-U_{0j})=0\,,
\quad V^{\prime}_{jE}(0)=0\,; \qquad
  V_{1jE}^{\prime\prime}-V_{1jE}+U_{1jE}=0\,,\quad g_{1jE}^{\prime}(0)=0\,.
\end{align}
and
\begin{align}\label{divergencg1O}
\big(U_{0j}g_{1jO}^{\prime}\big)^{\prime}=0\,, \quad V_{1jO}(0)=0\,; \qquad
          V_{1jO}^{\prime\prime}-V_{1jO}+U_{1jO}=-V_{0j}^{\prime}
  {\dot{x}_j}(T)\,, \quad g_{1jO}(0)=0\,.
\end{align}
By solving the $g$-equation in (\ref{divergencg1E}), we have for
$y\in(0,\infty)$ that
\begin{align}\label{ge1}
  g_{1jE}=\frac{\mu}{d_1} \int_0^y\frac{1}{U_{0j}(\rho)}
  \Big(\int_0^{\rho}U_{0j}(\xi)
  (\bar u-U_{0j}(\xi))\,d\xi\Big)\,d\rho+g_{1jE}(0)\,,
\end{align}
where $g_{1jE}(0)$ is an unknown constant.  In this way, the $V$-equation in
(\ref{divergencg1E}) becomes
\begin{align}\label{sum1}
         V_{1jE}^{\prime\prime}-V_{1jE}+U_{0j}g_{1jE}+ \bar\chi U_{0j}V_{1jE}=0\,,
         \quad -\infty<y< \infty\,; \qquad V_{1jE}^{\prime}(0)=0\,.
\end{align}
Similarly, we can solve the $g$-equation in (\ref{divergencg1O}) to get
\begin{align}\label{g1o}
g_{1jO}=\bar C_j\int_0^y \frac{1}{U_{0j}}\, d\xi\,,
\end{align}
where the constant $\bar C_j >0$ is undetermined.  Then, $V_{1jO}$ in
(\ref{divergencg1O}) satisfies
\begin{align}\label{sum2}
         V_{1jO}^{\prime\prime}-V_{1jO}+U_{0j}g_{1jO}+ \bar\chi U_{0j}V_{1jO}=-
         V_{0j}^{\prime}{\dot{x}_j}(T)\,, \quad
         -\infty<y<\infty\,; \qquad V_{1jO}(0)=0\,.
\end{align}
By adding (\ref{sum1}) and (\ref{sum2}), we obtain that the problem
for $V_1$ can be written in terms of an operator ${\mathcal L}$ as
\begin{align}\label{v1eq}
  {\mathcal L} V_{1j}+  U_{0j}g_{1jO}+U_{0j}g_{1jE}=-
  V_{0j}^{\prime}{\dot{x}}_{j}\,, \qquad \mbox{where} \quad
  \mathcal LV_{1j}:=V_{1j}^{\prime\prime}-V_{1j}+
  \bar\chi U_{0j}V_{1j}\,.
\end{align}
Here $g_{1jE}$ and $g_{1jO}$ are given by (\ref{ge1}) and (\ref{g1o}),
respectively.

To derive our solvability condition for (\ref{v1eq}), we multiply
(\ref{v1eq}) by $V_{0j}^{\prime}$ and integrate the resulting
expression over $(-\rho,\rho)$ with $\rho$ large. This yields that
\begin{align}\label{515insec5slowdynmaics}
  &\lim_{\rho\rightarrow+\infty}\bigg(\int_{-\rho}^{\rho} V_{0j}^{\prime}
    \mathcal LV_{1j} \,
    dy+\int_{-\rho}^\rho U_{0j} g_{1jO}V_{0j}^{\prime}\, dy+\int_{-\rho}^\rho
    U_{0j}g_{1jE}V_{0j}^{\prime} \, dy\bigg)=- \dot{x}_{j}
    \lim_{\rho\rightarrow+\infty}\int_{-\rho}^\rho\left(V_{0j}^{\prime}\right)^2
    \,dy\,.
\end{align}
To simplify \eqref{515insec5slowdynmaics}, we invoke Green's second
identity in the form
\begin{align}\label{solvability}
  &\lim_{\rho\rightarrow+\infty}\bigg(\int_{-\rho}^{\rho} V_{0j}^{\prime} \mathcal
    LV_{1j}\,
    dy-\int_{-\rho}^{\rho} V_{1j}\, \mathcal L V_{0j}^{\prime} \, dy\bigg)=
    \lim_{\rho\rightarrow +\infty }V_{1j}^{\prime}V_{0j}^{\prime}\Big{\vert}_{-\rho}^{\rho}-
    \lim_{\rho\rightarrow+\infty}V_{0j}^{\prime\prime} V_{1j}\Big{\vert}_{-\rho}^{\rho}\,.
\end{align}
Since $V_{0j}^{\prime}$ and $V_{0j}^{\prime\prime}$ are exponentially small as
$\vert y\vert\to\infty$, while $\mathcal LV_{0j}^{\prime}=0$, we conclude
from (\ref{solvability}) that
\begin{align}\label{condition1}
  \lim_{\rho\rightarrow+\infty}\int_{-\rho}^{\rho} V_{0j}^{\prime} {\mathcal L} V_{1j} \,
  dy  =0\,.
\end{align}
Moreover, since $U_{0j}$ and $g_{1jE}$ are even, while
$V_{0j}^{\prime}$ is odd, we get that
$\int_{-\rho}^\rho U_{0j}g_{1jE}V_{0j}^{\prime} \, dy=0$.  By using
this result together with (\ref{condition1}) in
(\ref{515insec5slowdynmaics}) and letting $\rho\to\infty$, we obtain from
(\ref{g1o}) the solvability condition
\begin{align}\label{solvability1}
  {\dot{x}}_j = \bar C_j \, \beta_j \,, \qquad \mbox{where} \quad
  \beta_j := -\frac{\int_0^\infty U_{0j}V_{0j}^{\prime}
  \left(\int_0^y\frac{1}{U_{0j}}\,
  d\xi\right) \, dy}{\int_0^\infty \left(V^{\prime}_{0j}\right)^2 \,dy}\,.
\end{align}
This expression determines the speed ${\dot{x}}_j$ of the spike in terms
of the, as yet, undetermined constant $\bar C_j$.

Our final step in the analysis is to formulate a matching condition
between the inner and outer solutions so as to determine $\bar
C_j$.  To do so, we find from (\ref{g1def}) and (\ref{g1o}), and
together with the relation $U_{0j}^{\prime}=\bar{\chi}U_{0j}
V_{0j}^{\prime}$ from the core problem (\ref{leading}), that the odd
part of the inner solution in the $j^{\mbox{th}}$ region satisfies
\begin{align}\label{U0g1C1}
  U_{0j}g_{1jO}^{\prime}=U_{1jO}^{\prime}-\bar\chi\big(U_{0j} V_{1jO}^{\prime}\big)-
  \bar\chi\left(U_{1jO}V_{0j}^{\prime}\right)=\bar  C_j\,.
\end{align}
Owing to the exponential decay of $V_{0j}^{\prime}$ and $U_{0j}^{\prime}$ as
$y\to\pm \infty$, as shown in \S \ref{sec2}, we obtain that the far-field
behavior of the derivative of the odd part of the inner solution must
satisfy
\begin{align}\label{innerfarfieldbehaviorinsec5}
  U_{1jO}^{\prime} \sim V^{\prime}_{1jO}\sim \bar C_j\,, \quad \mbox{as} \quad
  y\rightarrow \pm\infty\,.
\end{align}

In the outer region we have, as a result of the slow time dependence,
that the outer solution satisfies $u_o\sim w_o$, where $w_o$ was given
in (\ref{woouter}) of \S \ref{sec2} in our analysis of the
quasi-equilibrium pattern. From (\ref{woouter}), we have that 
\begin{equation*}
  u_o\sim w_o\sim \frac{2\bar\chi}{3 }\epsilon\sum_{k=1}^N v_{\max k}^3G(x;x_k)
  = \frac{2\bar\chi}{3 }\epsilon\sum\limits_{k\not=j} v_{\max k}^3 G(x;x_k)+
  \frac{2\bar\chi}{3 }\epsilon v_{\max j}^3 G(x;x_j) \,,
\end{equation*}
where $G(x;x_k)$ is the Helmholtz Green's function of (\ref{greenfunction}).

To proceed, it is convenient to decompose $G(x;x_k)$ globally on
$-1<x<1$ as
\begin{align}\label{g:decomp}
  G(x;x_k) = K(|x-x_k|) + R(x;x_k)\,, \quad \mbox{where} \quad
  K:=\frac{\mu}{2d_1}|x-x_k|\,.
\end{align}
Here $K$ is the singular part of $G$, while $R$ is the smooth regular
part.  By expanding $G(x;x_k)$ as $x\rightarrow x_j$, we get
\begin{align}\label{Gexpansion}
G(x;x_k)\sim\left\{\begin{array}{ll}
 G(x_j;x_k)+G_x(x_j;x_k)(x-x_j)\,,&j\not=k\,,\\
 K(\vert x-x_k\vert)+R(x_k;x_k)+R_x(x_k;x_k)(x-x_k)\,,&j=k\,.
\end{array}
\right.
\end{align}
Upon using (\ref{Gexpansion}), we obtain that the limiting
behavior of $u_{ox}\sim w_{ox}$ as we approach the $j^{\mbox{th}}$ spike is
\begin{align}\label{wxqouter}
 u_{ox}\sim  w_{ox}\sim  \frac{2\bar\chi}{3 }\epsilon\sum\limits_{k\not=j} v_{\max k}^3
  G_x(x_j;x_k)+ \frac{2\bar\chi}{3 }\epsilon v_{\max j}^3 R_x(x_j;x_j) +
  \frac{2\bar\chi}{3 }\epsilon v_{\max j}^3 \, K_{x}^{\pm}(x_j;x_j)\,, \quad
  \mbox{as} \quad x\rightarrow x_j^{\pm}\,,
\end{align}
where $K_x^{\pm}:=\pm\frac{\mu}{2d_1}$. To find ${\bar C}_j$, we
use the matching condition that $\epsilon^{2}U_{1j0}^{\prime}$ as
$y\to \pm \infty$ must agree with $\epsilon u_{ox}$ as $x\to x_j$,
when we include only the first two terms on the right-hand side of
(\ref{wxqouter}). This determines ${\bar C}_j$ as
\begin{align}\label{c1j}
  {\bar C}_j=\frac{2\bar\chi}{3 }\sum\limits_{k\not=j}^N v_{\max k}^3
  G_x(x_j;x_k)+\frac{2\bar\chi}{3 } v_{\max j}^3 R_x(x_j;x_j)\,.
\end{align}
Since $U_{1jE}^{\prime}$ is an odd function, the last
term in (\ref{wxqouter}) must match with the far-field behavior of
$\epsilon^2 U_{1jE}^{\prime}$. However, since this explicit matching
requirement does not affect our solvability condition, it is not
performed here.

Upon substituting (\ref{c1j}) into (\ref{solvability1}), we obtain a
coupled nonlinear ODE system for the spike locations in the
quasi-equilibrium pattern. In our ODE system, $v_{\max j}$ and
$\beta_j$ must be calculated by using the nonlinear algebraic system
(\ref{algebraicreal}) for $C_j$, $s_j$ and $v_{\max j}$.  This leads
to a differential algebraic system (DAE) of ODE's characterizing slow
spike dynamics for (\ref{timedependent}), which we summarize in the
following formal proposition.

\begin{proposition}\label{propositon51insec5}
  For (\ref{timedependent}) with $d_2=\epsilon^2\ll 1$ and where
  $d_1\in {\mathcal T}_e$, as defined in (\ref{d1:admiss}), assume
  that the quasi-equilibrium pattern is linearly stable with respect
  to the large eigenvalues and that (\ref{d1:qe}) holds. Then, the
  slow dynamics of a collection $x_1,\ldots, x_N$ of spikes satisfies
  the DAE system:
\begin{align}\label{DAE1}
  \frac{dx_{j}}{dt}\sim \frac{2\bar\chi}{3 }\epsilon^3 \beta_j
  \left(\sum\limits_{k\not=j}^N v_{\max k}^3 G_x(x_j;x_k)+
  v_{\max j}^3 R_x(x_j;x_j)\right)\,, \qquad \left\{\begin{array}{ll}
C_ke^{\bar \chi s_k}-s_k=0\,,\\
              -\frac{1}{2} v_{\max k}^2+\frac{1}{2}s^2_k+
       \frac{C_k}{\bar\chi}e^{\bar\chi v_{\max k}}-\frac{s_k}{\bar\chi}=0\,,\\
s_k=\frac{2\bar\chi}{3 }a_gv_{\max k}^3\epsilon \,,
\end{array}
\right.
\end{align}
where $j=1,\ldots,N.$ Here $\beta_j$ is defined in
(\ref{solvability1}) with the asymptotics (\ref{betajvalue}). The
Green's functions $G(x;x_k)$ and its regular part $R_x$ can be found
explicitly from (\ref{greenfunction}).  In particular, the locations
$x_j^0$, for $j=1,\ldots,N$, of the $N$-spike true steady-state
solution, are the equilibrium point of the slow dynamics and satisfy
\begin{align}\label{ss:balance}
  \sum\limits_{k\not=j}^N G_x\big(x_j^0;x_k^0\big)+
  R_x\big(x_j^0;x_j^0\big)=0\,, \quad j=1,\ldots,N \,.
\end{align}
\end{proposition}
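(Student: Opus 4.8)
The statement is the end-product of the matched asymptotic construction carried out through (\ref{quasiexpansion})--(\ref{c1j}), so the plan is to assemble those steps into a clean argument. First I would justify the distinguished scaling $T=\epsilon^3 t$: substituting the travelling inner ansatz (\ref{quasiexpansion}) with $y=\epsilon^{-1}[x-x_j(T)]$ into (\ref{timedependent}), the leading order problem is exactly the core problem (\ref{leading}), so $(U_{0j},V_{0j})$ is the frozen-location spike of Proposition \ref{prop1}; the only place $\dot x_j$ enters is through $v_t$, producing the forcing $-V_{0j}'\dot x_j$ at order $\epsilon^{3}$ in the $V_{1j}$ equation of (\ref{residual}), which confirms that $T=\epsilon^3 t$ is the correct slow time.

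Next I would carry out the even/odd decomposition (\ref{soldecomposition}) and reduce each piece to the scalar form (\ref{divergencg1E})--(\ref{divergencg1O}) via the substitution (\ref{g1def}), integrating the divergence-form $g$-equations to obtain (\ref{ge1}) and (\ref{g1o}). The key structural fact, obtained by differentiating the core equation in $y$, is that $\mathcal L V_{0j}'=0$, where $\mathcal L$ is the self-adjoint operator in (\ref{v1eq}); thus $V_{0j}'$ spans the relevant kernel, and pairing (\ref{v1eq}) with $V_{0j}'$ and integrating over $(-\rho,\rho)$ gives --- after Green's second identity annihilates the $\mathcal L$ term and parity annihilates the even contribution $U_{0j}g_{1jE}V_{0j}'$ --- the solvability relation $\dot x_j=\bar C_j\beta_j$ with $\beta_j$ as in (\ref{solvability1}). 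Its asymptotic value (\ref{betajvalue}) then follows by evaluating the two quadratures against the explicit $\sech^2$ sub-inner profile of Proposition \ref{prop1}.

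Then I would pin down $\bar C_j$ by matching. From $U_{0j}g_{1jO}'=\bar C_j$ (the integrated odd $g$-equation) and the exponential decay of $V_{0j}'$ and $U_{0j}'$ established in \S\ref{sec2}, the odd part of the inner correction grows linearly, $U_{1jO}'\to\bar C_j$ as $y\to\pm\infty$; see (\ref{innerfarfieldbehaviorinsec5}). Matching $\epsilon^2 U_{1jO}'$ to the outer gradient $\epsilon u_{ox}$ computed from (\ref{woouter}) through the singular/regular splitting (\ref{g:decomp})--(\ref{Gexpansion}) --- keeping only the two smooth terms on the right of (\ref{wxqouter}), since the singular $K_x^{\pm}$ term is matched instead by the far field of $\epsilon^2 U_{1jE}'$, with $U_{1jE}$ the even correction --- yields the expression (\ref{c1j}) for $\bar C_j$. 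Substituting (\ref{c1j}) into $\dot x_j=\bar C_j\beta_j$ produces the ODE in (\ref{DAE1}), while $v_{\max k}$, $s_k$, $C_k$ remain slaved to the algebraic system (\ref{algebraicreal}) evaluated at the current time-dependent locations; imposing $\dot x_j=0$ and using $\beta_j\neq 0$ gives the equilibrium condition (\ref{ss:balance}).

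The main obstacle will be the consistency and matching bookkeeping: one must verify that the even correction $U_{1jE}$, which also grows linearly in $y$ because it carries the logistic-source flux $\tfrac{\mu}{d_1}\int_0^\infty(U_0^2-\bar uU_0)\,dy$ already isolated in \S\ref{sec2}, genuinely decouples from the solvability condition while exactly matching the singular Green's-function term $K_x^{\pm}$, and that all dropped contributions --- higher inner corrections of $\Phi_{1j},\Psi_{1j}$ type, sub-inner boundary-layer remainders, and the ${\mathcal O}(\epsilon^2 v_{\max 0}^6)$ error in the outer problem --- are indeed $o(\epsilon^3)$ relative to the drift. This last point is where the hypotheses enter: linear stability of the quasi-equilibrium on ${\mathcal O}(1)$ time scales together with the positivity condition (\ref{d1:qe}) rules out resonance of the slow evolution with the neutral Turing modes of the base state between spikes, which is what legitimizes the reduction to the finite-dimensional DAE (\ref{DAE1}).
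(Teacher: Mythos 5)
Your proposal reproduces the paper's own derivation step by step: the traveling-wave inner ansatz, the even/odd decomposition, the $g_{1jE},g_{1jO}$ substitution into divergence form, the solvability condition via Green's second identity and the kernel fact $\mathcal{L}V_{0j}^{\prime}=0$, the parity annihilation of the even contribution, the determination of $\bar C_j$ by matching the odd linear growth of $U_{1jO}^{\prime}$ to the regular part of the outer gradient while assigning the singular $K_x^{\pm}$ piece to $U_{1jE}^{\prime}$, and finally the assembly into the DAE. The only slip is a small bookkeeping imprecision in the first paragraph: the forcing $-V_{0j}^{\prime}\dot x_j$ appears at order $\epsilon^{2}$ (not $\epsilon^{3}$) in the $V_{1j}$ equation, since $v_t=\epsilon^{3}V_T-\epsilon^{2}V_y\dot x_j$ and the $\epsilon^{3}V_T$ piece is the one that is negligible; this does not affect the argument, as the $\epsilon^{3}$ time scale is ultimately fixed by $\dot x_j=\bar C_j\beta_j$ with $\bar C_j={\mathcal O}(v_{\max}^{3})$ and $\beta_j={\mathcal O}(v_{\max}^{-1})$.
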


Proposition \ref{propositon51insec5} characterizes the
slow dynamics of an $N$-spike quasi-equilibrium solution on the long
${\mathcal O}\left(\epsilon^{-3}\right)$ time-scale. We remark that
this time-scale is longer than the
${\mathcal O}\left(\epsilon^{-2}\right)$ time-scale of slow spike
dynamics for the GM and Gray-Scott models (\cite{iw2},
\cite{chenwan2011stability}, \cite{dkp}), where there are no
chemotactic effects.

In Appendix \ref{appendixFfinal}, we show that $\beta_j$, as given in
(\ref{solvability1}), can be calculated asymptotically by retaining only the
contribution from the sub-inner solution. In particular, in
Appendix \ref{appendixFfinal} we provide the leading order estimate
\begin{align}\label{betajvalue}
\beta_j\sim \frac{2}{v_{\max j}}\,, \qquad \mbox{for} \,\,\, v_{\max j}\gg 1\,.
\end{align}
Moreover, in Appendix \ref{appendixFfinal} we show at the steady-state spike
locations that $\beta_j=\beta_0$ $\forall j$, with $\beta_0$ given in
(\ref{433insection4}).

To illustrate our results, we now compare the dynamics computed from
the DAE system (\ref{DAE1}) with corresponding numerical results
computed from the full PDE system (\ref{timedependent}) using FLEXPDE7
\cite{flex2021}. In our comparison, we computed the integrals defining
$\beta_j$ numerically from (\ref{solvability1}). The results for a one- and
two-spike dynamics are shown in Figure \ref{figuredyna} for the
parameter values in the figure caption.  In Figure \ref{subfig1},
where we chose the initial condition $x_1(0)=-0.1$, the asymptotic and
numerical spike trajectories are favorably compared for a one-spike
quasi-equilibrium pattern. In Figure \ref{subfig2} a similar favorable
comparison is shown for the case of two-spike dynamics starting from
the initial condition $x_1(0)=-0.6$ and $x_2(0)=0.6$.

\begin{figure}[h!]
\centering
\begin{subfigure}[t]{0.5\textwidth}
    \includegraphics[width=1.0\linewidth,height=7.0cm]{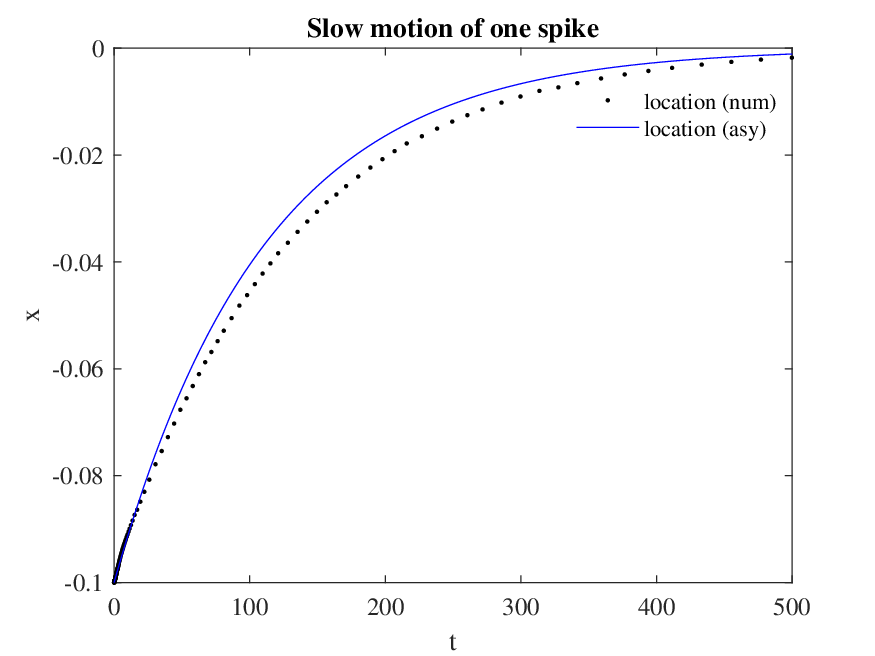}
     \caption{one-spike slow dynamics}
     \label{subfig1}
\end{subfigure}\hspace{-0.25in}
\begin{subfigure}[t]{0.5\textwidth}
  \includegraphics[width=1.0\linewidth,height=7.0cm]{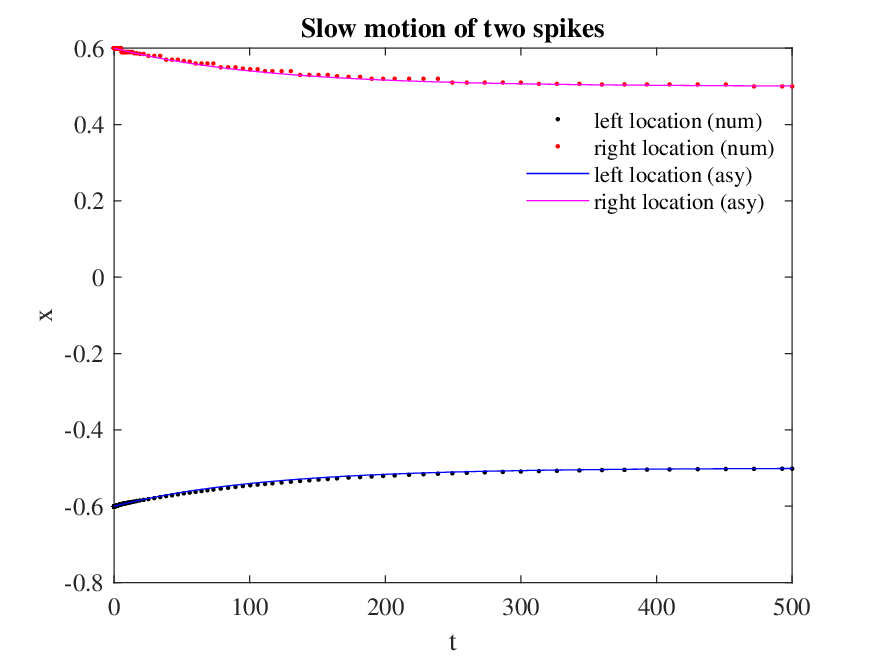}
    \caption{two-spike slow dynamics}
    \label{subfig2}
\end{subfigure}
\caption{\fontsize{11pt}{9pt}\selectfont { {\em Slow dynamics of one-
      and two-spike quasi-equilibria for (\ref{timedependent}) with
      different parameter values.  Left: $d_1=\chi=1$, $\bar u=2$,
      $d_2=0.02=\epsilon^2$ and $\mu=0.25;$ Right: $d_1=\chi=1,$
      $\bar u=2$, $d_2=0.005=\epsilon^2$, $\mu=1$.}  The solid curves
    are the results from the asymptotic DAE system (\ref{DAE1}).  The
    dotted curves are the results obtained from the full numerical PDE
    simulation of (\ref{timedependent}) using \cite{flex2021}. In the
    numerical results, the center of the spike is chosen as the
    maximum of $u$ on the computational grid. Observe the slow
    dynamics towards the equilibrium spike locations.}}
\label{figuredyna}
\vspace{-0.0in}
\end{figure}

\subsection{Computation of Jacobian Matrix for Balancing Conditions}\label{sec:balance_dae}

In this subsection, and as remarked in \S \ref{sec4}, we show that
when $d_1\in {\mathcal T}_{e}$ the matrix ${\mathcal M}$ in
(\ref{jacobianbutinsmallep}) arises from the linearization of the DAE
dynamics (\ref{DAE1}) in Proposition \ref{propositon51insec5} about
the steady-state spike locations. Our approach below is inspired by a
related analysis for the GM model in \cite{wei2007existence}.

To this end, we use the Green's function in
(\ref{greenequation}) together with its decomposition in
(\ref{g:decomp}) to define
\begin{align}\label{computation533insec51new}
\partial_{x_j}G(x_j;x_k) :=\left\{\begin{array}{ll}
\frac{\partial R}{\partial x}(x;x_j)\vert_{x=x_j}\,,&j=k\,,\\
\frac{\partial G}{\partial x}(x;x_k)\vert_{x=x_j}\,,&j\not=k\,,
\end{array}
\right. \qquad
\partial_{x_j}\partial_{x_k}G(x_j;x_k)=\left\{\begin{array}{ll}
 \frac{\partial}{\partial x}|_{x=x_j}\frac{\partial }{\partial y}|_{y=x_k}R(x;y)\,,
                                                &j=k\,,\\
\partial_{x_j}\partial_{x_k} G(x_j;x_k)\,,&j\not=k\,.
\end{array}
\right.
\end{align}
Here $R$ is the smooth regular part of $G$ as defined in
(\ref{g:decomp}).  Next, we denote the $N\times N$ matrices
$\nabla \mathcal G$, $(\nabla \mathcal G)^T$, and
$\nabla^2 \mathcal G$ evaluated at the steady-state spike locations by
\begin{align}\label{daes:g_mats}
  \nabla \mathcal G:=(\partial_{x_j} G(x_j^0;x_k^0))_{N\times N}\,, \qquad
  (\nabla \mathcal G)^T:=(\partial_{x_k}G(x_j^0;x_k^0))_{N\times N}\,, \qquad
  \nabla^2 \mathcal G:=(\partial_{x_j}\partial_{x_k} G(x_j^0;x_k^0))_{N\times N}\,.
\end{align}
The relationship between these matrices and the matrices
${\mathcal P}$, ${\mathcal P}_g$, and ${\mathcal G}_g$, as defined in
(\ref{matrixGg}), (\ref{matrixPg}), and (\ref{matrixGg}),
respectively, that were used in our analysis of the small eigenvalues
in \S \ref{sec4}, is clarified in Appendix \ref{appendixG}.

In our analysis, it is convenient to write the DAE system (\ref{DAE1})
in the form
\begin{align}\label{balancjacobi2}
  \frac{dx_j}{dt} = \frac{2\bar{\chi}}{3} \epsilon^3 \beta_j {\mathcal F}_j
  \,, \qquad \mbox{where} \qquad
  \mathcal F_j: = \sum\limits_{k=1}^N v_{\max k}^3 \partial_{x_j}
  G(x_j;x_k)\,, \qquad j=1,\ldots,N\,.
\end{align}
Our goal below is to compute the Jacobian matrix ${\mathcal J}:=
\Big(\frac{\partial \mathcal F_j}{\partial x_i}\Big)_{N\times N}$  that arises
by linearizing the DAE system (\ref{DAE1}) around the steady-state spike
locations. More specifically, if we introduce the perturbation
\begin{equation*}
  x_j = x_{j}^{0} + c_j e^{\lambda t} \,, \qquad j=1,\ldots,N\,,
\end{equation*}
into (\ref{balancjacobi2}), the linearization of the DAE system yields
the matrix eigenvalue problem
\begin{equation}\label{dae:linearize}
  \lambda \boldsymbol{c} = - \epsilon^3 \beta_0 \tilde{\mathcal M}
  \boldsymbol{c} \,,
  \qquad  \tilde{\mathcal M} := - \frac{2\bar\chi}{3} {\mathcal J} \,,
  \qquad \mbox{where} \qquad
  {\mathcal J}:= \left(\frac{\partial \mathcal F_j}{\partial x_i}
  \right)_{N\times N}\,,
\end{equation}
where $\boldsymbol{c}:=(c_1,\ldots,c_N)^T$.  From an explicit
calculation of $\mathcal J$ given below, we will show via
(\ref{dae:linearize}) that $\tilde{\mathcal M}$ is identical to the
matrix ${\mathcal M}$ as given in (\ref{jacobianbutinsmallep}), which
was derived in our analysis of the small eigenvalues.

To calculate the Jacobian, we first differentiate $\mathcal F_j$ in
(\ref{balancjacobi2}) with respect to $x_i$ to obtain
\begin{align}\label{daes:fjxi}
  \frac{\partial \mathcal F_j}{\partial x_i}  = \sum\limits_{k=1}^N 3v_{\max k}^2
  \left(\partial_{x_i} v_{\max k} \right) \partial_{x_j} G(x_j;x_k) +
  \sum\limits_{k=1}^N v_{\max k}^3 \partial_{x_i} \left[
  \partial_{x_j} G(x_j;x_k)\right]  \,, \qquad j=1,\ldots,N\,.
\end{align}
By using the nonlinear algebraic system in (\ref{DAE1}), in
(\ref{diffvmaxi}) of Appendix \ref{appendix:vderiv} we calculate
$\partial_{x_i} v_{\max k}$, so as to obtain
\begin{align}\label{Fj;xi}
  \frac{\partial \mathcal F_j}{\partial x_i}  \sim -3 \sum\limits_{k=1}^N
  \frac{v_{\max k}^2 \zeta_{\max k}}{\bar{\chi} s_k} \left(\partial_{x_i} s_{k}
\right) \partial_{x_j} G(x_j;x_k) +
  \sum\limits_{k=1}^N v_{\max k}^3 \partial_{x_i} \left[
  \partial_{x_j} G(x_j;x_k)\right]  \,, \qquad j=1,\ldots,N\,,
\end{align}
where
$\zeta_{\max k}=\left(1 - {2/(\bar{\chi} v_{\max k})}\right)^{-1}$.
To determine $\partial_{x_i} s_{j}$, as needed in (\ref{Fj;xi}), we
differentiate (\ref{quasisj}) in $x_i$ to get
\begin{align}\label{xisj}
  \partial_{x_i}s_j=\frac{2\bar\chi\epsilon}{3}\sum_{k=1}^N
  \bigg[3v_{\max k}^2 \left(\partial_{x_i}v_{\max k} \right) G(x_j;x_k)+v_{\max k}^3
  \partial_{x_i} G(x_j;x_k)\bigg]\,.
\end{align}
By using (\ref{diffvmaxi}) of Appendix \ref{appendix:vderiv}
to estimate $\partial_{x_i} v_{\max k}$, we obtain for $\epsilon\to 0$ that
\begin{align}\label{dae3:vk_xi}
  \partial_{x_i}s_j \sim -2\epsilon\sum_{k=1}^N
   \frac{v_{\max k}^2\zeta_{\max k}}{s_k}
  \left(\partial_{x_i}s_k \right)\, G(x_j;x_k)+ \frac{2\bar{\chi}\epsilon}{3}
  \sum_{k=1}^{N} v_{\max k}^3\partial_{x_i} G(x_j;x_k) \,, \quad j=1,\ldots,N\,.
\end{align}
Then, by calculating the second term in (\ref{dae3:vk_xi}), we get
\begin{align}\label{dae:vk_xi}
  \partial_{x_i}s_j\sim -
  2\epsilon\sum_{k=1}^N
   \frac{v_{\max k}^2\zeta_{\max k}}{s_k}
 \left( \partial_{x_i}s_k \right) G(x_j;x_k) +
  \left\{\begin{array}{ll}
           \frac{2\bar{\chi}\epsilon}{3} v_{\max i}^3 \partial_{x_i}G(x_j;x_i)\,,
           \quad &i\neq j\,,\\
             \frac{2\bar{\chi}\epsilon}{3} v_{\max i}^3 \partial_{x_i}G(x_j;x_i) +
             \frac{2\bar{\chi}\epsilon}{3} \sum_{k=1}^{N} v_{\max k}^3
             \partial_{x_i} G(x_j;x_k)\,,\quad  &i=j\,.
    \end{array}
\right.
\end{align}

Next, we evaluate (\ref{dae:vk_xi}) at the equilibrium solution where
$x_j=x_j^{0}$, for which $s_k=s_0$, $v_{\max k}=v_{\max 0}$, and $\zeta_{\max k}=
\zeta_0$, where $s_0={2\bar{\chi} a_g v_{\max 0}^3\epsilon/3}$. Moreover, at
the steady-state, we use the equilibrium condition (\ref{ss:balance}) to
eliminate the last sum in (\ref{dae:vk_xi}) that
holds when $i=j$. In this way, (\ref{dae:vk_xi}) reduces to
\begin{align}\label{dae2:vk_xi}
  \partial_{x_i}s_j\sim -
  \frac{3\zeta_0}{\bar{\chi} a_g v_{\max 0}} \sum_{k=1}^N
  \left( \partial_{x_i}s_k \right) G(x_j^{0};x_k^{0}) +
  \frac{s_0}{a_g} \partial_{x_i}G(x_j^{0};x_i^{0})\,, \quad i,j = 1,\ldots,N
  \,,
\end{align}
when evaluated at the steady-state. By introducing
$\boldsymbol{s}:=(s_1,\ldots,s_N)^T$ and
$\nabla:=(\partial_{x_{1}},\ldots,\partial_{x_N})$, 
 (\ref{dae2:vk_xi}) can be written in
matrix form as
\begin{align}\label{daes:smat} 
  \nabla \boldsymbol{s}\sim \frac{s_0}{a_g}\left(I+
  \frac{3\zeta_0}{\bar{\chi} a_g v_{\max 0}} \mathcal G\right)^{-1}
  \left(\nabla \mathcal G\right)^T\,, \qquad
  \mbox{where} \quad \nabla \boldsymbol{s}:=(\nabla s_1,\ldots,\nabla
s_N)^T\vert_{x_j=x_j^0,~j=1,\ldots,N}\,.
\end{align}
Here $\mathcal G:=(G(x_j^0;x_k^0))_{N\times N}$ is the Green's matrix
at the steady-state and $\left(\nabla \mathcal G\right)^T$ is defined
in (\ref{daes:g_mats}).  By using (\ref{daes:smat}), the first
term on the right-hand side of (\ref{Fj;xi}), when evaluated at
the steady-state, is the matrix product
\begin{align}\label{daes:term1_done}
-\frac{3}{\bar{\chi} s_0} v_{\max 0}^2 \zeta_0 \left(\sum\limits_{k=1}^N
  \partial_{x_j} G(x_j^0;x_k^0) \left(\partial_{x_i} s_{k}\right)\right)_{N\times
  N} \sim  - \frac{3}{\bar{\chi} s_0} v_{\max 0}^2 \zeta_0 \,\,
  \nabla \mathcal G \,  \nabla \boldsymbol{s} =
  -\frac{3}{\bar{\chi} a_g} v_{\max 0}^2 \zeta_0\,
  \nabla \mathcal G \,  \left(I+
  \frac{3\zeta_0}{\bar{\chi} a_g v_{\max 0}} \mathcal G\right)^{-1}
   \left(\nabla \mathcal G\right)^T\,.
\end{align}

Next, we focus on the second sum in (\ref{Fj;xi}), which is equivalent to
\begin{align}\label{daes:term2_a}
  \sum\limits_{k=1}^N v_{\max k}^3 \partial_{x_i}[\partial_{x_j}G(x_j;x_k)]=
  \left\{\begin{array}{ll}
        v_{\max i}^3 \, \partial_{x_i} \partial_{x_j} G(x_j;x_i)\,,
           \quad &i\neq j\,,\\
           \sum_{k\ne j}^{N} v_{\max k}^3 \partial^2_{x_j}G(x_j;x_k)+
           v_{\max j}^3 \, \frac{\partial}{\partial y}\vert_{y=x_j}
          \frac{\partial}{\partial x}\vert_{x=x_j} R(x;y) \,, \quad
            & i=j \,.
    \end{array}
\right.
\end{align}
From the BVP (\ref{greenequation}) satisfied by $G(x;x_k)$, we conclude that
$\partial_{x_j}^2 G(x_j;x_k) = - \frac{\bar{u} \mu}{d_1} G(x_j;x_k)$ for
$j\neq k$, so that 
\begin{align}\label{daes:term2_b}
  \sum\limits_{k=1}^N v_{\max k}^3 \partial_{x_i}[\partial_{x_j}G(x_j;x_k)]=
  \left\{\begin{array}{ll}
        v_{\max i}^3 \partial_{x_i} \partial_{x_j} G(x_j;x_i)\,,
           \quad &i\neq j\,,\\
           -\frac{\bar{u}\mu}{d_1}\sum_{k\ne j}^{N}
           v_{\max k}^3 G(x_j;x_k)+
           v_{\max j}^3 \, \frac{\partial}{\partial y}\vert_{y=x_j}
          \frac{\partial}{\partial x}\vert_{x=x_j} R(x;y)
       \,, \quad & i=j \,.
    \end{array}
\right.
\end{align}
To evaluate the last term in (\ref{daes:term2_b}), we use the chain rule
on the regular part $R(x;y)$ to get
\begin{equation}\label{daes:term2_more}
    \frac{\partial}{\partial y}\vert_{y=x_j}
    \frac{\partial}{\partial x}\vert_{x=x_j} R(x;y) =
    - R_{xx}(x_j;x_j) + \partial_{x_j}
    \left(\frac{\partial R}{\partial x}(x;x_j)\vert_{x=x_j} \right) \,.
\end{equation}
By using (\ref{computation533insec51new}) to identify the second term on
the right-hand side of (\ref{daes:term2_more}), and by calculating
$R_{xx}(x_j;x_j)$ from (\ref{appendixG:rxx}) of
Appendix \ref{appendixG}, we conclude that
\begin{equation}\label{daes:term2_nasty}
      \frac{\partial}{\partial y}\vert_{y=x_j}
    \frac{\partial}{\partial x}\vert_{x=x_j} R(x;y) =
    - \frac{\bar u\mu}{d_1} G(x_j;x_j) + \partial^2_{x_j}
     G(x_j;x_j) \,.
\end{equation}
Upon substituting (\ref{daes:term2_nasty}) into (\ref{daes:term2_b}) we
obtain
\begin{align}\label{daes:term2_c}
  \sum\limits_{k=1}^N v_{\max k}^3 \partial_{x_i}[\partial_{x_j}G(x_j;x_k)]=
  \left\{\begin{array}{ll}
        v_{\max i}^3 \partial_{x_i} \partial_{x_j} G(x_j;x_i)\,,
           \quad &i\neq j\,,\\
           -\frac{\bar{u}\mu}{d_1}\sum_{k=1}^{N}
           v_{\max k}^3 G(x_j;x_k)+  v_{\max j}^3 \,
       \partial^2_{x_j}  G(x_j;x_j) \,, \quad & i=j \,.
    \end{array}
\right.
\end{align}

Finally, we evaluate (\ref{daes:term2_c}) at the steady-state solution
where $x_j=x_j^{0}$ and $v_{\max j}=v_{\max 0}$ for $j=1,\ldots,N$, and where
we recall that $a_g=\sum_{k=1}^{N} G(x_j^{0};x_k^{0})$. Upon writing the
resulting expression in matrix form, we get
\begin{align}\label{daes:term2_final}
  \left(  \sum\limits_{k=1}^N v_{\max 0}^3 \partial_{x_i}[\partial_{x_j}G(x_j;x_k)]
  \right)_{N\times N} =-\frac{\bar{u}\mu}{d_1} v_{\max 0}^3 a_g I +
  v_{\max 0}^3 \nabla^2 \mathcal G\,,
\end{align}
where $\nabla^2 \mathcal G$ was defined in (\ref{daes:g_mats}). By
substituting (\ref{daes:term2_final}) and (\ref{daes:term1_done}) in
(\ref{daes:fjxi}), we calculate the Jacobian as
\begin{align}\label{daes:jac_fin}
  {\mathcal J} :=
  \bigg(\frac{\partial \mathcal F_i}{\partial x_j}\bigg)_{N\times N} =
  - \frac{3 v_{\max 0}^2\zeta_0}{\bar{\chi} a_g} \nabla \mathcal G
  \left( I + \frac{3\zeta_0}{\bar{\chi} a_g v_{\max 0}} {\mathcal G}
  \right)^{-1} \left( \nabla \mathcal G\right)^T +
  v_{\max 0}^3 \nabla^2 \mathcal G - \frac{{\bar u}\mu a_g}{d_1} v_{\max 0}^3
  I \,,
\end{align}
where $\nabla\mathcal G$, $(\nabla \mathcal G)^T$ and
$\nabla^2\mathcal G$ were given in (\ref{daes:g_mats}). Then, by
evaluating the matrix $\tilde{\mathcal M}$ in (\ref{dae:linearize})
that arises in the linearization of the DAE system around the
steady-state spike locations, we get
\begin{align}\label{daes:tilde_mat}
  \tilde{\mathcal M}=
  \frac{2 v_{\max 0}^2\zeta_0}{a_g} \nabla \mathcal G
  \left( I + \frac{3\zeta_0}{\bar{\chi} a_g v_{\max 0}} {\mathcal G}
  \right)^{-1} \left( \nabla \mathcal G\right)^T -
  \frac{2\bar\chi}{3} v_{\max 0}^3 \nabla^2 \mathcal G +
  \frac{2{\bar u}\mu \bar{\chi} a_g}{3d_1} v_{\max 0}^3
  I \,.
\end{align}
Finally, upon using $s_0={2\bar{\chi} a_g v_{\max 0}^3\epsilon/3}$ to
simplify the coefficient of the identity matrix in
(\ref{daes:tilde_mat}), and by using the key relations
\begin{equation}\label{daes:mat-conv}
  \nabla {\mathcal G} = {\mathcal P} \,, \qquad
  \left( \nabla {\mathcal G} \right)^T= -{\mathcal P}_g \,, \qquad
  \nabla^2 {\mathcal G} = -{\mathcal G}_g \,, \qquad
\end{equation}
as derived in Appendix \ref{appendixG}, we conclude upon comparing
(\ref{daes:tilde_mat}) and (\ref{jacobianbutinsmallep}) that
$\tilde{\mathcal M}=\mathcal M$.

In summary, our analysis establishes that the small eigenvalues
associated with the linearization of the steady-state solution are
precisely the same eigenvalues that are associated with linearizing
the DAE system of slow spike dynamics about the steady-state spike
locations.

\section{Discussion}\label{sec:discussion}

In this concluding section, we first discuss how our analysis of 1D
spike patterns in the KS model (\ref{timedependent}) in the limit
$d_2\ll 1$ shares some common features with a related analysis of
localized spike patterns for the GM model
(cf.~\cite{iron2001stability}, \cite{iw2}, \cite{wwhopf},
\cite{wei2007existence}). We also mention a few open problems that
warrant further investigation.

\subsection{Comparison with the GM System}\label{secextra}

We first make some remarks on an interesting connection between the
analysis of spike patterns for the KS model (\ref{timedependent}) in
the limit $d_2=\epsilon^2\ll 1$ and that for the GM model
\begin{equation}\label{GM}
  A_t = d_a A_{xx} - A + {A^{p}/H^q} \,, \qquad
  \tau H_t = D H_{xx} - \mu H + {A^r/H^s} \,,
\end{equation}
in the limit $d_a\ll 1$ of small activator diffusivity. In this
context, $A$ and $H$ are the activator and inhibitor fields,
respectively. Moreover, $\tau>0$, $D>0$, and $\mu>0$ are constants and
the GM exponents $(p,q,r,s)$ satisfy the usual conditions $p>1$,
$q>0$, $r>0$, $s\geq 0$, and ${(p-1)/q}<{r/(s+1)}$.

In \cite{iron2001stability}, steady-state 1D spike patterns in which
$A$ is spatially localized with spike-width ${\mathcal O}(\sqrt{d_a})$
were constructed for (\ref{GM}) in the limit $d_a\ll 1$ using the
method of matched asymptotic expansions. For this class of solutions,
the spike profile is characterized by the homoclinic solution of
$w_{yy}-w+w^p=0$, where $y=d_{a}^{-1/2}(x-x_j)$. In the outer region
between spikes where $A\approx 0$, the interaction between
steady-state spikes is mediated by the inhibitor diffusion field with
the term $A^{r}/H^s$ being approximated by Dirac masses concentrated
at the spike locations.  As a result, when $d_a\ll 1$, the activator
$A$ behaves like a linear combination of discrete spikes on the
domain, while $H$ is well-approximated by a superposition of
translates of the reduced-wave Green's function.

In comparison, we observe from our steady-state analysis of the KS
model (\ref{timedependent}) in the limit $d_2\ll 1$ given in \S
\ref{sec2} that the chemoattractant $v$ and the cellular population
density $u$ share a similar asymptotic structure to $A$ and $H$,
respectively, in (\ref{GM}). In our analysis, the spike profile for
$v$ is represented by a homoclinic solution (\ref{Hamiltonian}), while
the outer solution for $u$ is well-approximated by a superposition of
translates of the Helmholtz Green's functions
(\ref{woouter}). Moreover, in the limit $d_2\ll 1$, the background
constant $s_0\ll 1$ in (\ref{ag}) plays the same role as the locally
constant inhibitor field in the core of a spike for (\ref{GM}). 

With regards to the NLEP linear stability analysis, the approximating
NLEP (\ref{NLEPbarzvar}) that arises from our sub-inner layer analysis
in \S \ref{sec3} is rather similar in form to the NLEP for the GM
model that occurs for the exponents $p=3$ and $s=0$.  This connection
results from the explicit form given in (\ref{gm_anal}) for the
sub-inner solution. As a result, by adapting the NLEP linear stability
analysis given in \cite{iron2001stability}, \cite{wwhopf}, and
\cite{wei2007existence}, we are able to calculate parameter thresholds
corresponding to either zero-eigenvalue crossings as $d_1$ is varied,
or Hopf bifurcations as $\tau$ is increased.  In particular, although
in minimal KS models, without the logistic term, spike amplitude
temporal oscillations are not expected, our NLEP linear stability
analysis in the presence of the logistic source term has shown that a
sufficiently large reaction time $\tau>0$ in (\ref{lep}) can trigger
such spike oscillations for a one-spike steady-state. The mechanism
for these oscillations, being a sufficiently large diffusive
time-delay between the two components in (\ref{timedependent}), is
qualitatively the same as that studied in \cite{wwhopf} for the GM
model.

With regards to the analysis of the small eigenvalues, which
characterize possible translational instabilities of the spike
locations, the reduced multi-point BVP derived in Proposition
\ref{prop41lambda} is very similar in form to that derived for the GM
model in \S 4 of \cite{iron2001stability}. As a result, the detailed
framework for the GM matrix analysis in \S 4 of
\cite{iron2001stability} was employed for obtaining Proposition
\ref{prop:small} for the small eigenvalues, which lead to the explicit
result in Lemma \ref{small:explicit}.  As qualitatively similar to
that for the GM model (cf.~\cite{iron2001stability}), we showed for
our $N$-spike steady-sate solution that there are $N-1$ simultaneous
zero-crossings for the small eigenvalues that occur at the same
critical value of $d_1$. For the GM model, these simultaneous
crossings occur at a common value of $D$, and this threshold
provides the critical value of $D$ for which branches of
asymmetric spike equilibria, corresponding to spikes of different
height, bifurcate from the symmetric steady-state branch
(cf.~\cite{asymm}).  Finally, the slow DAE dynamics of spike
quasi-equilibria, as characterized by Proposition
\ref{propositon51insec5} in terms of gradients of the Helmholtz
Green's function, is rather similar to that derived for the GM model
in \cite{iw2}.

One novel feature of our analysis has been to use distinctly different
approaches to both calculate and verify linear stability thresholds
resulting from our detailed asymptotic analysis. In particular, in \S
\ref{sec:jac_non}, the non-invertibility of the Jacobian matrix that
resulted from the steady-state analysis for fixed spike locations
closely approximates the NLEP linear stability threshold when
$\tau=0$.  Moreover, the linearization of the steady-state of the DAE
slow spike dynamics (\ref{DAE1}) was found in \S \ref{sec:balance_dae}
to correspond identically to our asymptotic result in Proposition
\ref{propositon51insec5} for the small eigenvalues. Finally, our
zero-eigenvalue crossing condition for the small eigenvalues in
(\ref{hj:zero}) was shown in Appendix \ref{app:asymmetric} to
correspond to the bifurcation point where asymmetric equilibria
emerge from the symmetric steady-state solution branch.

Next, we discuss some key differences between our analysis of spike
patterns for the KS model (\ref{timedependent}) and that for the GM
model in \cite{iron2001stability}. Firstly, owing to the different
Green's functions mediating the spike interactions for
(\ref{timedependent}) and for (\ref{GM}), for the GM model there is no
analogue of the positivity and resonance conditions of
(\ref{d1:admiss}) discussed in Remark \ref{remark:pos}.  Secondly, the
competition and translational stability thresholds for symmetric spike
equilibria for the GM model (\ref{GM}) are given by  explicit
critical values for the inhibitor diffusivity $D$. In our analysis of
the KS model, these two thresholds are characterized by weakly
nonlinear algebraic equations in the cellular diffusivity $d_1$. This
distinction arises, in part, to the existence of an intricate
sub-inner structure of the spike profile for the KS model
(\ref{timedependent}) that has no counterpart in the GM model
(\ref{GM}).  Finally, for the KS model, the numerical results shown in
Figure \ref{figurenucleation} suggest that spike nucleation behavior
can occur from the midpoint of the background state between
neighboring spikes as $d_1$ is decreased below the positivity
threshold $d_{1pN}$ in (\ref{d1:admiss}). Such nucleation behavior
does not occur for the GM model (\ref{GM}).

\subsection{Further Directions}\label{sec:further}

In the limit of small diffusivity $d_2=\epsilon^2\ll 1$ for the
chemotactic concentration field, we have developed a hybrid
asymptotic-numerical approach to analyze the existence, linear
stability, and slow dynamics of 1D spike patterns for
(\ref{timedependent}). The study of pattern forming properties for
(\ref{timedependent}) when $d_2\ll 1$ is distinctly different than
that based on the usual approach of considering the large chemotactic
drift limit, i.e.~$\chi\gg 1$ in (\ref{timedependent}), as was done in
most previous analyses and numerical simulations of localized patterns
(cf.~\cite{KWX2022}, \cite{KWX20221}, \cite{jin2016pattern},
\cite{wang2016qualitative}). In the limit $d_2\ll 1$, we have shown
that the analysis of localized 1D spike patterns is rather closely
related to that for the GM model.

We now discuss a few open problems related to our study. From a
mathematical viewpoint, the analytical tractability of our quasi
steady-state and linear stability analysis has relied to a large
extent on the availability of certain explicit formulae for the spike
profile that exists in the sub-inner region of a spike. More
specifically, our explicit but approximate analysis is based on the
asymptotically large spike height $v_{\max}\gg 1$ limit. However,
since $v_{\max}={\mathcal O}(-\log\epsilon)$ is only rather large when
$\epsilon$ is extremely small, our asymptotic results for
steady-states and for the linear stability thresholds provide only a
moderately decent prediction of corresponding full numerical results when
$\epsilon=0.01$ is only fairly small. One theoretical open
challenge is to provide a rigorous steady-state and linear stability
analysis for multi-spike patterns that is based on the full inner problem
(\ref{Hamiltonian}) and the corresponding NLEP (\ref{NLEP}), which does
not exploit the large $v_{\max}$ limit.

One important open problem from the viewpoint of global bifurcation
theory is to numerically compute solution branches of localized 1D
steady-state spike patterns for (\ref{timedependent}) as $d_1$, $d_2$
and $\chi$ are varied. This would clarify how solution branches of
spike equilibria differ when either $d_2\ll 1$ or when $\chi\gg
1$. The observation of spike nucleation behavior as shown in
\cite{painter2011spatio,hillen2013merging} for certain parameter sets,
and hinted at in Figure \ref{figurenucleation} as $d_1$ is decreased
below the positivity threshold in (\ref{d1:admiss}) of Remark
\ref{remark:pos} should be investigated. For chemotaxis models of
urban crime, spike nucleation events for the emergence of hotspots
have been shown to occur near saddle-node bifurcation points of
branches of spike equilibria (cf.~\cite{tsecrime}). In contrast, for
(\ref{timedependent}) when $d_1={\mathcal O}(1)$,
$d_2={\mathcal O}(1)$ and $\chi={\mathcal O}(1)$ they appear to arise
from Turing bifurcations of the base state
(cf.~\cite{painter2011spatio,hillen2013merging}). Two other possible
extensions of our 1D analysis are to analyze the existence and linear
stability of asymmetric spike equilibria for (\ref{timedependent}) and
to analyze steady-state patterns for variants of (\ref{timedependent})
that incorporate other cellular population growth models and possible
nonlinear mechanisms that couple the cellular density to the
chemoattractant concentration.

It would also be worthwhile to extend our 1D analysis to analyze the
existence, linear stability, and slow dynamics of localized patterns
for (\ref{timedependent1}) when $d_2\ll 1$ in a 2D bounded domain.
One such direction would be to analyze the linear stability properties
of a localized stripe in a 2D rectangular domain that results from a
trivial extension of the 1D spike in the transverse
direction. Numerical results in \cite{painter2011spatio} suggest that,
in marked contrast to the well-known instability behavior of
homoclinic stripes for the GM model (cf.~\cite{gmstripe}), a localized
stripe for a coupled chemotaxis system may be linearly stable to
breakup into spots. As a result, it would be interesting to
theoretically investigate the possibility of varicose or transverse
instabilities of such localized stripes.  A second interesting
direction is motivated by the numerical simulations reported in
\cite{jin2016pattern} that suggest that localized 2D spot patterns for
(\ref{timedependent1}) should exist in the singular limit $d_2\ll
1$. Given the rather close correspondence between the analysis of
localized patterns for (\ref{timedependent1}) in the limit $d_2\ll 1$ 
and the GM model in the limit of a small activator diffusivity, the
framework for a 2D steady-state and linear stability analysis of
(\ref{timedependent1}) for spot patterns would likely rely somewhat on
the approach developed for the 2D GM model, as summarized in
\cite{WeiWinterBook}.

\section*{Acknowledgments}

We thank Professor T. Kolokolnikov for useful discussions and many
critical suggestions.  The research of M.J. Ward and J. Wei is
partially supported by NSERC of Canada.

\begin{appendix}
\renewcommand{\theequation}{\Alph{section}.\arabic{equation}}
\setcounter{equation}{0}

\section{Solvability of the Outer Problem: Turing Instability of the Base State}\label{app:Turing}

In this appendix, we relate the solvability of the outer problem
(\ref{outerproblem}) to Turing bifurcation points in the 
parameter $d_1$ for the spatially uniform base state $u=v=0$ of
(\ref{timedependent}). This analysis will motivate Remark
\ref{remark:pos}.

On an interval of length $L$, with homogeneous Neumann conditions for
$u$ and $v$, we linearize (\ref{timedependent}) around $u=v=0$ by
setting $u=e^{\lambda t + ikx}\Phi$ and
$v=e^{\lambda t + ikx}N$, where $k={m \pi/L}$ with
$m=1,2,\ldots$. We readily obtain that
\begin{align}\label{app:turing_matrix}
 \left( \begin{array}{cc}
          -d_1 k^2 + \mu \bar{u} - \lambda & 0\\
           1  &  -\epsilon^2 k^2 -1 - \lambda\\
    \end{array}
  \right)  \left( \begin{array}{c}
          \Phi\\
          N \\
    \end{array}
  \right) = {\mathbf 0} \,,
\end{align}
which has a nontrivial solution if and only if $\lambda=-1-\epsilon^2 k^2$
or $\lambda=-d_1k^2+\mu \bar{u}$. As such, with $k={m \pi/L}$, there
is a zero-eigenvalue crossing associated with the spatially uniform state
$u=v=0$ at the critical values
\begin{equation}\label{app:tur_d1m}
    d_1 = \frac{\mu \bar{u} L^2}{m^2 \pi^2} \,, \quad m=1,2,\ldots\,.
\end{equation}
This base-state is linearly stable on a domain of length $L$ when
$d_1>{\mu \bar{u} L^2/\pi^2}$. Setting $L=2$, consistent with
(\ref{timedependent}), we conclude that (\ref{app:tur_d1m}) coincides
precisely with the ``resonant'' values of $d_1$ in
(\ref{prop:w0h}) for the outer problem.

However, in our construction of $N$-spike steady-state patterns for
(\ref{timedependent}), the spatially uniform base state approximates
the outer solution $w_o$ only on intervals of length ${2/N}$. Upon
setting $L={2/N}$ in (\ref{app:tur_d1m}), this observation suggests
that the outer solution for an $N$-spike steady-state should be
linearly stable when $d_1> {4\mu \bar{u} /(N^2 \pi^2)}$. This latter
threshold also has the alternative interpretation that it is the
smallest value of $d_1$ for which the outer solution $w_o$ is always
positive in $|x|<1$. In particular, for an $N$-spike steady-state, it
is easy to verify that this positivity condition for $w_o$ holds when
$d_1> d_{1pN}:= {\bar u \, \mu /\lambda_1}$, where
$\lambda_1:={N^2\pi^2/4}$ is the first non-zero eigenvalue of the
negative Neumann Laplacian -${d^2/dx^2}$ on $(-1/N,1/N)$. We remark
that for quasi-equilibrium patterns with unequally spaced spikes, this
positivity threshold must be modified to (\ref{d1:qe}).

Next, we verify that the outer problem (\ref{outerproblem}) is
solvable for an $N$-spike steady-state pattern when $d_1=d_{1Tm}$,
where $d_{1Tm}$ is one of the ``resonant'' values in (\ref{prop:w0h}) with
$m=1,\ldots,N-1$. For the steady-state problem, where
$v_{\max k}=v_{\max 0}$ and where $x_k=x_{k}^{0}$, with $x_{k}^{0}$ as
given in (\ref{locations}), (\ref{outerproblem}) is solvable at
$d_1=d_{1Tm}$ if and only if
\begin{equation}\label{app:trig_sum}
  \int_{-1}^{1} w_{oh}  {\mathcal L}_0 w_o \, dx =
  \frac{2\bar\chi \epsilon}{3} v_{\max 0}^3 \sum_{k=1}^{N}
  \int_{-1}^{1}  w_{oh}(x)\, \delta(x-x_k^{0}) \, dx =
  \frac{2\bar\chi \epsilon}{3} v_{\max 0}^3 \sum_{k=1}^{N}
  \cos\left(\frac{m\pi}{2}\frac{(2k-1)}{N}\right) =0 \,.
\end{equation}
The trigonometric sum in (\ref{app:trig_sum}) can be readily evaluated
for $m=1,\ldots,N-1$ with the result
\begin{equation}
\sum_{k=1}^{N}
\cos\left(\frac{m\pi}{2}\frac{(2k-1)}{N}\right) =
\frac{\sin(m\pi)}{2\sin\left({m\pi/N}\right)}=0 \,.
\end{equation}
As a consequence, (\ref{outerproblem}) is solvable for an $N$-spike
steady-state even when $d_1={\mathcal T}_e$ (see (\ref{d1:admiss})).

Finally, we remark that when $d_1={4\mu \bar{u}/(m^2 \pi^2)}$, for
some $m=1,\ldots,N-1$, a solution (non-unique) to (\ref{outerproblem})
for an $N$-spike steady-state can be represented as
$u_o\sim w_o=\frac{2}{3}\bar\chi\epsilon v_{\max 0}^3\sum_{k=1}^N
G_{m}(x;x_k^{0})$. Here, with the operator ${\mathcal L}_0$ of
(\ref{outerproblem}), the modified Green's function $G_{m}(x;\xi)$
satisfies
\begin{equation}\label{app:gmod}
  {\mathcal L}_0 G_m = \delta(x-\xi) - w_{oh}(\xi) w_{oh}(x) \,, \quad
  |x|\leq 1 \,; \qquad G_{mx}(\pm 1;\xi)=0\,.
\end{equation}
Although $G_m$ can be found analytically, for simplicity we have
restricted our analysis only to when $d_1\in {\mathcal T}_e$.

\section{Calculation of $\mathcal G_{\lambda}$ and $\mathcal P$}\label{appendixA}

In this appendix, we show how to determine the matrix spectrum of
${\mathcal G}_{\lambda}$, as defined in (\ref{nlep:glambda}) of \S
\ref{sec3}. Moreover, we calculate ${\mathcal P}$, as defined in
(\ref{matrixGg}) of \S \ref{sec4}. To do so, we introduce an
auxiliary problem for $y=y(x)$, given by
\begin{align}\label{bvpy}
  \frac{d_1}{\mu}y^{\prime\prime}+ {\hat u} y=0\,, \quad -1<x<1\,; \quad
  y^{\prime}(\pm 1)=0\,; \quad [y]_j=0\,, \quad \Big[\frac{d_1}{\mu}y^{\prime}
\Big]_j=b_j\,,
\end{align}
for $j=1,\ldots,N$, where $[y]_j:=y(x_j^+)-y(x_j^-)$ and $x_j=x_j^{0}$
is given by (\ref{locations}). Here
$\hat{u}:=\bar{u}-{\tau\lambda_0/\mu}$. This problem is solvable when
$d_1\neq {4\mu \hat{u}/(m^2\pi^2)}$ for $m=1,2,\ldots$. When $\tau=0$,
(\ref{bvpy}) is always solvable when $d_1\in {\mathcal T}_e$.

With the exception of this restricted set for $d_1$, the solution to
(\ref{bvpy}) can be represented in terms of the Green's function
$G_{\lambda}(x;x_k)$, satisfying (\ref{lam:greenequation}), as
$y=\sum_{k=1}^N b_k G_{\lambda}(x;x_k)$. Upon defining
$\boldsymbol{y}:=\left(y_{1},\ldots,y_N\right)^T$,
$\boldsymbol{\langle y^{\prime}\rangle}:=\left(y^{\prime}_{1},\ldots,
  y^{\prime}_{N}\right)^T$ and
$\boldsymbol{b}:=\left(b_1,\ldots,b_N\right)^T$, where $y_j=y(x_j)$
and
$\langle y^{\prime}\rangle_j=
\big(y^{\prime}(x_j^{+})+y^{\prime}(x_j^{-})\big)/2$, we identify the
eigenvalue-dependent Green's matrix ${\mathcal G}_{\lambda}$ of
(\ref{nlep:glambda}) and ${\mathcal P}$ of (\ref{matrixGg}) as
\begin{align}\label{yprimeandomega}
  \boldsymbol{y}=\mathcal G_{\lambda}\boldsymbol{b}\,, \qquad
 \boldsymbol{ \langle y^{\prime}\rangle} =\mathcal P\boldsymbol{b}\,.
\end{align}

Next, we show how to represent ${\mathcal G}_{\lambda}$ and
${\mathcal P}$ in terms of tridiagonal matrices. By solving
(\ref{bvpy}) on each subinterval, and enforcing the continuity
conditions $[y]_j=0$ for $j=1,\ldots,N$, we get
\begin{align}\label{analyticy}
y=\left\{\begin{array}{ll}
y_1\frac{\cos[\theta_\lambda(1+x)]}{\cos[\theta_\lambda(1+x_1)]}\,,&-1<x<x_1\,,\\
  y_j\frac{\sin[\theta_\lambda(x_{j+1}-x)]}{\sin[\theta_\lambda(x_{j+1}-x_j)]}+
      y_{j+1}\frac{\sin[\theta_\lambda(x-x_j)]}{\sin[\theta_\lambda(x_{j+1}-x_j)]}\,,
                &x_j<x<x_{j+1}\,,\quad j=1,\ldots,N-1\,,\\
y_{N}\frac{\cos[\theta_\lambda(1-x)]}{\cos[\theta_\lambda(1-x_{N})]}\,,&x_{N}<x<1\,.
\end{array}
\right.
\end{align}
Then, upon satisfying the jump conditions in (\ref{bvpy}) we can write
$\boldsymbol{b}$ as 
\begin{equation}\label{theta_lambda}
  \boldsymbol{b} = \frac{d_1\theta_{\lambda}}{\mu} {\mathcal D} \boldsymbol{y} \,,
  \qquad \mbox{where} \quad  \theta_{\lambda}:=
  \sqrt{\frac{\mu }{d_1}\left(\bar{u}-\frac{\tau\lambda_0}
      {\mu}\right)} \,.
\end{equation}
Here, for $d_1\neq {4\mu \hat{u}/(m^2\pi^2)}$ with $m=1,2,\ldots$,
${\mathcal D}$ is the invertible tridiagonal matrix defined by
\begin{align}\label{inverseG}
\mathcal D=\left( \begin{array}{ccccccc}
  d & f&0&\cdots &0 &0& 0\\
  f & e&f&\cdots &0 &0& 0\\
   0 & f&e&\ddots &0 &0& 0\\
     \vdots & \vdots&\ddots&\ddots &\ddots &\vdots& \vdots\\
    0 & 0&0&\ddots &e &f& 0\\
  0 & 0&0&\cdots &f &e& f\\
   0 & 0&0&\cdots &0 &f& d\\
    \end{array}
  \right)\,.
  \end{align}
The matrix entries of ${\mathcal D}$, for which the identity $d=f+e$ holds,
are
\begin{align}\label{def}
    d\equiv\tan(\theta_\lambda/N)-\cot(2\theta_\lambda/N)\,,\quad
    e\equiv-2\cot(2\theta_\lambda/N)\,,\quad f\equiv\csc(2\theta_\lambda/N)\,.
\end{align}
By combining (\ref{theta_lambda}) with the first equation
in (\ref{yprimeandomega}), we conclude for
$d_1\neq {4\mu \hat{u}/(m^2\pi^2)}$ for $m=1,2,\ldots$ that
\begin{equation}\label{key:G_to_D}
  {\mathcal G}_{\lambda} = \sqrt{\frac{\mu}{d_1\hat{\mu}}} {\mathcal D}^{-1}\,,
  \quad \mbox{with} \quad \hat{u}=\bar{u} - \frac{\tau\lambda_0}{\mu} \,.
\end{equation}
When $\tau=0$, we remark that (\ref{key:G_to_D}) holds when
$d_1\in {\mathcal T}_e$.

Since ${\mathcal D}$ is a tridiagonal matrix with a constant row sum, its
eigenpairs $\kappa_j$ and ${\bf q}_j$ for $j=1,\ldots,N$ can be calculated
explicitly (see \cite{iron2001stability}), with the following result:

\begin{proposition}\label{prop31}
The eigenvalues $\kappa_j$ and the normalized eigenvectors of $\mathcal D$ are
\begin{align*}
    \left\{\begin{array}{ll}
             \kappa_1=e+2f\,;\quad \kappa_j=e+2f\cos\left(\pi(j-1)/N\right)\,,
             &j=2,\ldots,N,\\
 \boldsymbol{q}_1=\frac{1}{\sqrt{N}}(1,\ldots,1)^T\,,\quad
 q_{l,j}=\sqrt{\frac{2}{N}}\cos\left(\frac{\pi(j-1)}{N}(l-\frac{1}{2})\right)\,,
           \quad &j=2,\ldots,N\,,\,\, l=1,\ldots,N,
    \end{array}
    \right.
\end{align*}
where $\boldsymbol{q_j}=(q_{1,j},\ldots,q_{N,j})^T$ and $d$, $e$ and $f$ are
given by (\ref{def}). By using (\ref{key:G_to_D}), the eigenvalues $\sigma_j$
of ${\mathcal G}_{\lambda}$ when $d_1\neq {4\mu \hat{u}/(m^2\pi^2)}$ for
  $m=1,2,\ldots$, are
\begin{equation}\label{app:sigma_j}
  \sigma_j = \sqrt{\frac{\mu}{d_1\hat{u}}} \left[e + 2f
    \cos\left( \frac{\pi(j-1)}{N}\right) \right]^{-1}, \qquad j=1\,,\ldots,N
  \,.
\end{equation}
By setting $\lambda_0=0$, we use (\ref{key:G_to_D}) and
Proposition \ref{prop31} to calculate $a_g$, as defined in
(\ref{ag}). For $d_1\in {\mathcal T}_e$, we get
\begin{equation}\label{realag}
  a_g = \sum_{k=1}^N G\big(x_j^0;x_k^0\big)= \sqrt{\frac{\mu}{d_1\bar{u}}}
  {\mathcal D}^{-1} \sqrt{N} {\boldsymbol q}_1 =
  \sqrt{\frac{\mu}{d_1\bar {u}}}\frac{1}{(e+2f)}=
  \frac{1}{2} \sqrt{\frac{\mu}{d_1\bar{u}}}
  \cot\left(\frac{\theta}{N}\right) \,, \quad
  \theta= \sqrt{\frac{\mu\bar{u}}{d_1}}\,.
\end{equation}
\end{proposition}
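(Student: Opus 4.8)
The plan is to treat this as a routine structured-eigenvalue computation for the tridiagonal matrix $\mathcal{D}$ of (\ref{inverseG})--(\ref{def}), following the approach of \cite{iron2001stability}, and then to read off the two corollaries about $\mathcal{G}_\lambda$ and $a_g$. Since $\mathcal{D}$ is real and symmetric it has an orthonormal eigenbasis, so it suffices to exhibit $N$ mutually orthogonal unit vectors together with the eigenvalues they realize. First I would record the trivial observation that every row of $\mathcal{D}$ sums to $d+f=e+2f$ (using the identity $d=e+f$ from (\ref{def})), so that $\boldsymbol{q}_1=N^{-1/2}(1,\dots,1)^T$ is already an eigenvector with $\kappa_1=e+2f$. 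For the remaining modes I would use the discrete-cosine ansatz $q_{l,j}=\cos\!\big(\omega_j(l-\tfrac12)\big)$ with $\omega_j:=\pi(j-1)/N$.

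The verification of $\mathcal{D}\boldsymbol{q}=\kappa\boldsymbol{q}$ proceeds row by row via the addition formula $\cos(A+\omega)+\cos(A-\omega)=2\cos\omega\cos A$. An interior row $2\le l\le N-1$ reads $f q_{l-1}+e q_l+f q_{l+1}=(e+2f\cos\omega)q_l$, which imposes no constraint on $\omega$ and merely fixes $\kappa=e+2f\cos\omega$. The first row $d q_1+f q_2=\kappa q_1$ then reduces, after the same manipulation, to the identity $d=e+f$, hence is automatically satisfied. The only genuine constraint comes from the last row $f q_{N-1}+d q_N=\kappa q_N$, which (using $d-e=f$) collapses to $\cos\!\big(\omega(N+\tfrac12)\big)=\cos\!\big(\omega(N-\tfrac12)\big)$; this forces $2N\omega\in 2\pi\Z$, i.e.\ $\omega=\pi(j-1)/N$ for $j=1,\dots,N$ (the value $\omega=\pi$ being discarded since it produces the zero vector). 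This simultaneously pins down the admissible frequencies and yields $\kappa_j=e+2f\cos(\pi(j-1)/N)$. Because $\cos$ is injective on $[0,\pi]$, the $\kappa_j$ are pairwise distinct, so the $\boldsymbol{q}_j$ are automatically orthogonal; normalization is a one-line geometric-sum computation, $\sum_{l=1}^N\cos^2(\omega_j(l-\tfrac12))=N/2$ for $j\ge2$ using $\sum_{l=1}^N e^{2i\omega_j l}=0$ when $N\nmid(j-1)$. Having produced $N$ orthonormal eigenvectors we conclude these are all of them.

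The two corollaries are then immediate. By (\ref{key:G_to_D}), $\mathcal{G}_\lambda=\sqrt{\mu/(d_1\hat u)}\,\mathcal{D}^{-1}$, so $\mathcal{G}_\lambda$ has the same eigenvectors $\boldsymbol{q}_j$ and eigenvalues $\sigma_j=\sqrt{\mu/(d_1\hat u)}\,\kappa_j^{-1}$, which is (\ref{app:sigma_j}). For $a_g$ I would set $\lambda_0=0$ and note that, by (\ref{ag}), $a_g$ is any component of $\mathcal{G}\mathbf{1}$ with $\mathbf{1}=(1,\dots,1)^T=\sqrt{N}\,\boldsymbol{q}_1$; since $\mathcal{D}^{-1}\mathbf{1}=(e+2f)^{-1}\mathbf{1}$, all components coincide (so $a_g$ is well defined, independent of $j$, as already used for equally spaced spikes) and equal $\sqrt{\mu/(d_1\bar u)}/(e+2f)$. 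Finally the half-angle identity $e+2f=2\big(1-\cos(2\theta/N)\big)/\sin(2\theta/N)=2\tan(\theta/N)$, with $\theta=\sqrt{\mu\bar u/d_1}$, turns this into $\tfrac12\sqrt{\mu/(d_1\bar u)}\cot(\theta/N)$, which is (\ref{realag}).

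I do not expect any real obstacle here: every step is elementary trigonometry together with standard linear algebra for symmetric matrices. The one place that warrants care is recognizing that it is the \emph{last} boundary row (equivalently, by the reflection symmetry of $\mathcal{D}$, the first) that selects the quantized frequencies $\omega_j=\pi(j-1)/N$ --- the interior rows only relate $\kappa$ to $\omega$, and the identity $d=e+f$ renders the first row vacuous --- so one must not be misled into believing that every cosine mode is an eigenvector.
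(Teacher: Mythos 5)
Your proof is correct and complete. The paper itself does not supply the argument---it merely notes that $\mathcal D$ is a constant-row-sum tridiagonal matrix and cites \cite{iron2001stability}---and your direct verification via the cosine ansatz $q_{l,j}=\cos\!\big(\omega_j(l-\tfrac12)\big)$ (the interior rows fixing $\kappa=e+2f\cos\omega$, the identity $d=e+f$ rendering the first boundary row vacuous, and the last boundary row quantizing $\omega$ to $\pi(j-1)/N$), followed by the transfer to $\sigma_j$ via (\ref{key:G_to_D}) and to $a_g$ via $\mathcal D^{-1}\sqrt N\,\boldsymbol q_1=(e+2f)^{-1}\sqrt N\,\boldsymbol q_1$ together with the half-angle identity $e+2f=2\tan(\theta/N)$, is precisely the standard calculation that citation invokes.
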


To determine ${\mathcal P}$ when $\lambda_0=0$, we use
(\ref{analyticy}) to write $\boldsymbol{\langle y^{\prime}\rangle}$ in
terms of $\boldsymbol{y}$ as
$\boldsymbol{\langle y^{\prime}\rangle }=-\left({\theta/2}\right)
  \csc\Big({2\theta/N}\Big)\mathcal C^T\boldsymbol{y}$,
  where $\theta=\sqrt{\mu\bar{u}/d_1}$ and ${\mathcal C}$ is the tridiagonal
  matrix defined by
\begin{align}\label{app:matc}
\mathcal C:=\left( \begin{array}{ccccccc}
  1 & 1&0&\cdots &0 &0& 0\\
  -1 & 0&1&\cdots &0 &0& 0\\
   0 & -1&0&\ddots &0 &0& 0\\
     \vdots & \vdots&\ddots&\ddots &\ddots &\vdots& \vdots\\
    0 & 0&0&\ddots &0 &1& 0\\
  0 & 0&0&\cdots &-1 &0& 1\\
   0 & 0&0&\cdots &0 &-1& -1\\
    \end{array}
  \right).
  \end{align}
  By combining the second equation in (\ref{yprimeandomega}) with this
  result, we conclude for $d_1\in {\mathcal T}_e$ that
  \begin{align}\label{matrixP}
    \mathcal P=-\frac{\mu}{2d_1}\csc\left(\frac{2\theta}{N}\right)\mathcal
    C^T\mathcal D^{-1}\,.
  \end{align}

  \section{Proof of Theorem \ref{theorem1}}\label{appensec3}

For convenience, we drop the overbars in \eqref{NLEPbarzvar} to rewrite the
NLEP as
\begin{align}\label{NLEPbarzvartilde}
 \Psi_{0 z z}+ U_0\Psi_0-\alpha U_0\frac{\int_{-\infty}^\infty { U_0}^2\Psi_0 \, dz }
  {\int_{-\infty}^\infty { U_0}^2 \, dz}=\Lambda \Psi_0\,, \quad
  -\infty<z<+\infty\,; \qquad \Psi_0 \,\,\, \mbox{bounded as} \,\,\,
  |z|\rightarrow \infty \,.
\end{align}
Here $U_0=2\sech^2 z$, $\Lambda:=\delta^2(\lambda_0+1)$ with
$\delta:={2/(\bar\chi v_{\max 0})}$.  It is well-known
\cite{kolokolnikov2009existence} that the homoclinic solution to
$w_{zz}- w+{w}^3=0$ on $-\infty<z<\infty$ with $w(0)>0$,
$w^{\prime}(0)=0$ and $w\rightarrow 0$ as $|z|\to\infty$ is
$w=\sqrt{2}\sech(y)$.  Therefore, we have $U_0=w^2$ and the NLEP
(\ref{NLEPbarzvartilde}) becomes
\begin{align}\label{NLEPbarzvartilde1}
\Psi_{0zz}+  w^2\Psi_0- \alpha w^2\frac{\int_{-\infty}^\infty { w}^4\Psi_0 \, dz }
  {\int_{-\infty}^\infty { w}^4 \, dz}=\Lambda \Psi_0\,, \quad -\infty<z<+\infty
  \,; \qquad \Psi_0 \,\,\, \mbox{bounded as}\,\,\,  |z|\rightarrow \infty \,.
\end{align}

There is a standard approach \cite{Wei1999single} to study
(\ref{NLEPbarzvartilde1}).  Firstly, we focus on the following
local eigenvalue problem:
\begin{align}\label{local}
  \Psi_{0zz}+  w^2\Psi_0=\lambda\Psi_0\,, \quad -\infty<z<\infty\,;
  \qquad \Psi_0 \,\,\, \mbox{bounded as} \,\,\, |z|\to \infty\,.
\end{align}
As shown in \cite{kolokolnikov2009existence}, the principal eigenvalue
of (\ref{local}) is $\lambda=1$ and the corresponding eigenfunction is
$\Psi_0=w$.  Next, we transform (\ref{NLEPbarzvartilde1}) into a form
more amenable for analysis. To this end, we observe from the ODE
$w^{\prime\prime}-w+w^3=0$ that $w^2$ satisfies
\begin{align}\label{groundstatewsquare}
  (w^2)_{zz}- 4w^2+3w^4=0\,, \quad -\infty< z<+\infty\,; \qquad
  w\rightarrow 0 \quad \mbox{as} \quad |z|\to \infty\,.
\end{align}
Therefore, upon multiplying the $\Psi_0$-equation in
(\ref{NLEPbarzvartilde1}) by $w^2$ and integrating it over
$(-\infty,\infty)$ by parts, we get
\begin{align}\label{appendixc1}
  \int_{-\infty}^\infty (w^2)_{zz}\Psi_0\, dz+\int_{-\infty}^\infty w^4\Psi_0\, dz-
  \alpha \int_{-\infty}^\infty w^4\Psi_0\, dz=\Lambda\int_{-\infty}^\infty w^2\Psi_0\,
  dz\,.
\end{align}
Next, upon substituting (\ref{groundstatewsquare}) into
(\ref{appendixc1}), we obtain
\begin{align}\label{relationappendix3}
  (4-\Lambda)\int_{-\infty}^\infty w^2\Psi_0\, dz=(2+\alpha)\int_{-\infty}^\infty
  w^4\Psi_0\, dz\,.
\end{align}
Then, by using (\ref{relationappendix3}), we transform the NLEP in
(\ref{NLEPbarzvartilde1}) into the following form, as written in
(\ref{secthopf-nlepequiv}):
\begin{align}\label{nlepequiv}
  \Psi_{0zz}+w^2\Psi_0- \kappa \frac{\int_{-\infty}^\infty w^2\Psi_0\, dz}
  {\int_{-\infty}^\infty w^4 \, dz}\, w^2=\Lambda \Psi_0\,, \qquad
  \kappa :=\frac{\alpha(4-\Lambda)}{(2+\alpha)}\,,
\end{align}

Next, we test (\ref{nlepequiv}) against the conjugate $\Psi_0^*$ and
by integrating the resulting expression by parts we get
\begin{align}\label{appendixsec3eq1}
  \int_{-\infty}^\infty \vert \Psi_{0z}\vert^2\, dz-\int_{-\infty}^\infty
  w^2\vert\Psi_0\vert^2\, dz+\Lambda\int_{-\infty}^\infty \vert\Psi_0\vert^2\, dz
  =-\frac{\alpha (4-\Lambda)\big\vert\int_{-\infty}^\infty w^2\Psi_0\, dz
  \big\vert^2}{(2+\alpha)\int_{-\infty}^\infty w^4\, dz}\,.
\end{align}
We first claim that $\Lambda$ is real-valued when $\alpha$ is
real-valued.  To show this, the imaginary part of
(\ref{appendixsec3eq1}) yields
\begin{align}\label{appensec3eq2}
  \mbox{Im}(\Lambda)\int_{-\infty}^\infty \vert\Psi_0\vert^2\, dz
  =\frac{\alpha\, \mbox{Im}(\Lambda)}{2+\alpha}\frac{\big\vert\int_{-\infty}^\infty
  w^2\Psi_0\, dz\big\vert^2}{\int_{-\infty}^\infty w^4\, dz}\,.
\end{align}
Then, upon invoking the Cauchy-Schwartz inequality, we obtain
\begin{align}\label{appensec3eq3}
  \frac{\alpha}{2+\alpha}\frac{\big\vert\int_{-\infty}^\infty w^2\Psi_0\, dz
  \big\vert^2}{\int_{-\infty}^\infty \, w^4dz}\leq
  \frac{\alpha}{2+\alpha}{\int_{-\infty}^\infty\vert \Psi_0\vert^2\, dz}\,.
\end{align}
Upon substituting this inequality into (\ref{appensec3eq2}), we
conclude that $\mbox{Im}(\Lambda)=0$. This completes the proof of our
claim.  It immediately follows that (\ref{appendixsec3eq1}) is also
real-valued when $\alpha$ is real-valued.

The next step is to study the sign of $\Lambda$ in (\ref{nlepequiv}).
We claim that
\begin{align*}
  \int_{-\infty}^\infty \vert\Psi_{0z}\vert^2\, dz-
  \int_{-\infty}^\infty w^2\vert\Psi_0\vert^2\, dz\geq  -
  \frac{\big\vert\int_{-\infty}^\infty w^2\Psi_0\, dz\big\vert^2 }
  {\int_{-\infty}^\infty w^2\, dz}\,.
\end{align*}
Similarly as the proof of Lemma 5 in \cite{kolokolnikov2009existence},
this claim is established if we can equivalently show that the
real eigenvalues $\upsilon$ of the following NLEP are non-positive:
\begin{align}\label{appendix-E-eq1}
  \Delta\Psi_0+w^2\Psi_0-w^2\frac{\int_{-\infty}^\infty w^2\Psi_0 dz}
  {\int_{-\infty}^\infty w^2 dz}=\upsilon \Psi_0 \,.
\end{align}
We first observe that if $\Psi_0\equiv 1$, then $\upsilon=0.$ Next, we
observe that (\ref{appendix-E-eq1}) is equivalent to solving
\begin{align}
  (L_{0}-\upsilon)\Psi_0=w^2\,, \qquad \int_{-\infty}^\infty w^2\Psi_0\, dz=
  \int_{-\infty}^\infty w^2\, dz=4 \,.
\end{align}
As such, we define $\Xi$ as
\begin{align*}
\Xi(\upsilon):= \int_{-\infty}^\infty w^2(L_{0}-\upsilon)^{-1}w^2 \, dz-4 \,.
\end{align*}
Since the operator is self-adjoint and $L_0 (1)=w^2$,  we obtain that
$\Xi(0)=0.$  By differentiating in $\Xi$ we get
\begin{align*}
  \Xi^{\prime}(\upsilon)=\int_{-\infty}^\infty w^2(L_{0}-\upsilon)^{-2}w^2 \, dz
  =\int_{-\infty}^\infty  [(L_{0}-\upsilon)^{-1}w^2]^2\, dz>0\,.
\end{align*}
Noting that $L_0$ admits a single positive eigenvalue at $\upsilon=1,$
it follows that $\Xi$ has a single pole at $\upsilon=1$ and that there
are no other poles for $\upsilon>0.$ On the other hand, as
$\upsilon\rightarrow +\infty,$ we have
$$\Xi(\upsilon)\sim -\frac{1}{\upsilon}\int_{-\infty}^\infty w^4\, dz\rightarrow
0^{-}\,. $$ To summarize, $\Xi(\upsilon)$ has a vertical asymptote at
$\upsilon=1$; $\Xi(0) = 0$, $\Xi\rightarrow 0^{-}$ as
$\upsilon\rightarrow \infty$ and $\Xi$ is increasing for $\upsilon>0$.
It follows that $\Xi(\upsilon)\not=0$ for all $\upsilon>0$, which
proves our claim.

Next, from (\ref{appendixsec3eq1}), we conclude that when
$\Lambda\geq {4/(\bar\chi^{2}v_{\max 0}^{2})}$ we have
\begin{align}\label{appensec3eq4}
  -1+\Lambda\frac{\int_{-\infty}^\infty w^2\, dz}{\int_{-\infty}^\infty w^4\, dz}
  \leq -\frac{\alpha(4-\Lambda)}{2+\alpha}\frac{\int_{-\infty}^\infty w^2 \, dz}
  {\int_{-\infty}^\infty w^4\, dz}\,.
\end{align}
By using the identity
$4\int_{-\infty}^\infty w^2\, dz=3\int_{-\infty}^\infty w^4\,dz$,
(\ref{appensec3eq4}) implies that $\alpha\leq 1-{3\Lambda/4}$.
By observing that the condition
$\Lambda\geq {4/(\bar\chi^{2}v_{\max 0}^{2})}$ holds when
$\lambda_0<0$, we conclude that $\lambda_0<0$ when
\begin{align}\label{alphacritical}
\alpha\leq 1-3\bar\chi^{-2}v_{\max 0}^{-2}\,.
\end{align}

Similarly as in \cite{Wei1999single}, we find when $\alpha=1$,
$\Psi_0\equiv 1$ is an eigenfunction such that
(\ref{NLEPbarzvartilde1}) admits the zero eigenvalue.  If $\alpha>1,$
we claim there exists a positive real eigenvalue of
(\ref{NLEPbarzvartilde1}).  In fact, assume that some $\Lambda$
satisfies $\Lambda \geq 0$. Then, one obtains that
(\ref{NLEPbarzvartilde1}) can be written as the equivalent form
\begin{align*}
  \Psi_0=\alpha\frac{\int_{-\infty}^\infty w^4\Psi_0\, dz}{\int_{-\infty}^\infty w^4
  \,dz} (L_0-\Lambda)^{-1}w^2\,, \quad \mbox{where} \quad L_0\Psi_0=\Psi_{0zz}+
  w^2\Psi_0\,,
\end{align*}
and where $\alpha$ satisfies
$\int_{-\infty}^\infty w^4\, dz=\alpha \int_{-\infty}^\infty
\left[(L_0-\Lambda)^{-1}w^2\right]w^4\, dz$.  Then, we define
$R(\Lambda)$ as
\begin{align*}
  R(\Lambda):= \int_{-\infty}^\infty w^4\, dz-\alpha
  \int_{-\infty}^\infty \left[(L_0-\Lambda)^{-1}w^2\right]w^4\, dz \,.
\end{align*}
Since $R(0)=(1-\alpha)\int_{-\infty}^\infty w^4\, dz<0$ and
$R(\Lambda)\rightarrow+\infty$ as $\Lambda \rightarrow 1^{-},$ we
conclude that there exists a positive $\Lambda\in (0,1)$ such that
$R(\Lambda)=0.$ This finishes the proof of our claim.

By comparing this result and (\ref{alphacritical}), it follows that
there is still a gap region between $1-3\bar\chi^{-2}v_{\max 0}^{-2}$
and $1$.  To eliminate this gap, and obtain a refined
prediction of the threshold $\alpha_c$, we shall rewrite the solution
to (\ref{nlepequiv}) in terms of the hypergeometric function and
perform a detailed asymptotic expansion of it similar to that in
\cite{wei2002critical}.

To do so, we first recall the definition and some properties of
generalized hypergeometric functions \cite{slater1966generalized}.
The generalized hypergeometric functions
${}_pF_q(a_1,\cdots,a_p;b_1,\cdots,b_q;z)$ are defined by the
following series:
\begin{align}\label{hyperdef}
  {}_pF_{q}(a_1,\cdots,a_p;b_1,\cdots,b_q;z)=1+\frac{a_1\cdots a_p}
  {b_1\cdots b_q}\frac{z}{1!}+\frac{(a_1+1)\cdots (a_p+1)}{(b_1+1)\cdots
  (b_q+1)}\frac{z^2}{2!}+\cdots\,.
\end{align}
Their derivatives satisfy a recursion formula, given by
\begin{align}\label{hyperdefdiff}
  \frac{d}{dz}{}_pF_{q}(a_1,\cdots,a_p;b_1,\cdots,b_q;z)=
  \frac{\Pi_{i=1}^Pa_i}{\Pi_{i=1}^q b_i}{}_pF_{q}(a_1+1,\cdots,a_p+1;b_1+1,\cdots,
  b_q+1;z)\,.
\end{align}
In addition, the relationship between
${}_{p+1}F_{q+1}(a_1,\cdots,a_p;b_1,\cdots,b_q;z)$ and
${}_pF_{q}(a_1,\cdots,a_p;b_1,\cdots,b_q;z)$ is
\begin{align}\label{property1}
&{}_{p+1}F_{q+1}(a_1,\cdots,a_p,a_{p+1};b_1,\cdots,b_q,b_{q+1};z)\nonumber\\
  &\qquad =\frac{\Gamma(b_{q+1})}{\Gamma(a_{p+1})\Gamma(b_{q+1}-a_{p+1})}
     \int_{0}^1 t^{a_{p+1}-1}(1-t)^{b_{q+1}-a_{p+1}-1}{}_{p}
     F_{q}(a_1,\cdots,a_p;b_1,\cdots,b_q;tz)\, dt\,,
\end{align}
where $\Gamma$ is the Gamma function
$\Gamma(z):=\int_0^{\infty }t^{z-1}e^{-t} dt$.  In particular, when
$p=2$ and $q=1$, (\ref{hyperdef}) becomes the ordinary hypergeometric
function, which satisfies
\begin{align}\label{property2}
  {}_2F_1(a_1,a_2;b_1;1)=\frac{\Gamma(b_1)\Gamma(b_1-a_2-a_1)}
  {\Gamma(b_1-a_1)\Gamma(b_1-a_2)}\,, \qquad b_1>a_1+a_2\,.
\end{align}
In addition, for $\vert z\vert<1$, ${}_2F_1(a_1;b_1,b_2;z)$ has the following
recursion formula:
\begin{align}\label{property3}
  {}_2F_1(a_1,a_2;b_1;z)=(1-z)^{b_1-a_2-a_1}{}_2
  F_1(b_1-a_1;b_1-a_2,b_1;z)\,, \qquad b_1<a_1+a_2\,.
\end{align}

With this preliminary background, we return to the NLEP
(\ref{nlepequiv}) and use generalized hypergeometric functions to
calculate the critical value of $\alpha$, labeled by $\alpha_c$, for
which $\lambda_0=0$ is an eigenvalue. This implies that
$\Lambda=\delta^2$ in (\ref{nlepequiv}). By defining $\bar z:=2z$,
(\ref{nlepequiv}) can be written when $\lambda_0=0$ and
$\Lambda=\delta^2$ as
\begin{align}\label{psi0barzbarz}
  \Psi_{0\bar z\bar z}+\frac{w^2}{4}\Psi_0-
  \frac{\bar{\kappa}}{4}\frac{\int_{-\infty}^\infty w^2\Psi_0 \, d\bar z}
  {\int_{-\infty}^\infty w^2 \, d\bar z}w^2=\frac{\delta^2}{4}\Psi_0\,,
  \qquad \bar{\kappa} := \frac{\alpha (4-\Lambda)}{2+\alpha}
  \frac{\int_{-\infty}^{\infty} w^2 \, d\bar{z}}
  {\int_{-\infty}^{\infty} w^4 \, d\bar{z}}\,.
\end{align}
To use the standard results in \cite{wei2002critical}, we define
$\bar w:=\frac{3}{2}\sech^2(\bar z/2)$ and $\delta_1:=\delta/2$, so that
(\ref{psi0barzbarz}) becomes
\begin{align}\label{psi0barzbarz1}
  \Psi_{0\bar z\bar z}+\frac{\bar w}{3}\Psi_0-\frac{\bar{\kappa}}{3}
  \frac{\int_{-\infty}^\infty \bar w\Psi_0 \, d\bar z}
  {\int_{-\infty}^\infty \bar w \, d\bar z}\bar w=\delta_1^2\Psi_0\,.
\end{align}
Next, as was shown in \cite{wei2002critical}, (\ref{psi0barzbarz1})
can be transformed into a local problem with an integral
constraint:
\begin{align}\label{localwithnonlocal}
  \Psi_{0\bar z\bar z}+\frac{\bar w}{3}\Psi_0=\delta_1^2\Psi_0+\bar w\,, \qquad
  \int_{0}^\infty \bar w\Psi_0\, d\bar z=\frac{3}{\bar{\kappa}}\int_0^\infty \bar w \,
  d\bar z\,.
\end{align}
Upon defining $G$ by $\Psi_0=\bar w^{\delta_1}G$, we substitute this
relation into (\ref{localwithnonlocal}) to obtain
\begin{align}\label{hypergeometric1}
  G_{\bar z\bar z}-2\delta_1\frac{\bar w_{\bar z}}{\bar w}G_{\bar z}+
  \left[\frac{1}{3}-\frac{\delta_1}{3}(1+2\delta_1)\right]\bar w G=
  \bar w^{1-\delta_1}\,.
\end{align}
We next define $\tilde z:={2\bar w/3}$ and rewrite
(\ref{hypergeometric1}) as
\begin{align}\label{hypergeometric2}
  \tilde z(1-\tilde z)G_{\tilde z\tilde z}+\left[c-(a+b+1)\tilde z\right]G_{\tilde z}
  -abG=\left(\frac{3}{2}\right)^{1-\delta_1}\tilde z^{-\delta_1}\,,
\end{align}
where we have labeled $a$, $b$, and $c$ by
$a=\delta_1+1$, $b=\delta_1-{1/2}$ and $c=1+2\delta_1$.

With this reformulation, we now solve (\ref{hypergeometric2}) in terms
of hypergeometric functions.  To begin, we recall from
\cite{kolokolnikov2009existence} that the two linear
independent solutions to the homogeneous counterpart of
(\ref{hypergeometric2}) are
\begin{align}\label{kernels}
  {}_2F_1(a,b;c;\tilde z)\,, \qquad
  \tilde z^{1-c}{}_2F_1(a-c+1,b-c+1;2-c;\tilde z)\,.
\end{align}
As such, we need only find a particular solution, labeled by $G_1$, of
(\ref{hypergeometric2}).  To do so, we write $G_1$ in the form
$G_1(\tilde z)=\tilde z^{i}\sum_{k=0}^{\infty} c_{k}\tilde z^k$,
where the constants $i$ and $c_k$ need to be determined.  Upon substituting
this infinite series into (\ref{hypergeometric2}), we solve the
resulting recursion equations for $i$ and $c_k$ to get
\begin{align}\label{G1}
  G_1=\left(\frac{3}{2}\right)^{1-\delta_1}(1-\delta_1^2)^{-1}\tilde
z^{1-\delta_1} {}_3F_2\bigg(1,\frac{1}{2},2;2-\delta_1,2+\delta_1;\tilde z\bigg)\,.
\end{align}
It is verify that $\Psi_0=\bar w^{\delta_1}G_1\rightarrow 0$ as
$\bar z\rightarrow +\infty.$ However, we must have
$\Psi_{0\bar z}(0)=0$ since $\Psi_0$ is even.  To enforce this condition,
we write $\Psi_0$ as a linear combination of $G_1$ and the
first homogeneous solution $G_2$ in (\ref{kernels}) as
\begin{align}\label{finalsec630}
  \Psi_0=\bar w^{\delta_1}(G_1+AG_2)\,, \qquad \mbox{where} \quad
  G_2:={}_2F_1\Big(\delta_1+1,\delta_1-\frac{1}{2};2\delta_1+1;\tilde z\Big)\,,
\end{align}
where the constant $A$ will be determined below. To determine
$A$, we apply (\ref{hyperdefdiff}) on (\ref{G1}) to get
\begin{align*}
  \frac{dG_1}{d\tilde z}= \left(\frac{3}{2}\right)^{1-\delta_1}
  (1-\delta_1^2)^{-1}{}_3
  F_2\bigg(2,\frac{3}{2},3;3-\delta_1,3+\delta_1;\tilde z\bigg)\,.
\end{align*}
By using (\ref{property1}), together with (\ref{property2}) and
(\ref{property3}), we further calculate for
$\vert\tilde z\vert\rightarrow 1^-$, that
\begin{align}\label{G1behavior}
  \frac{dG_1}{d\tilde z}\sim \left(\frac{3}{2}\right)^{1-\delta_1}
  \frac{(1-\tilde z)^{-1/2}}{4}{}_2F_1\left(1,\frac{3}{2};3;1\right)
   \sim \left(\frac{3}{2}\right)^{1-\delta_1} (1-\tilde z)^{-1/2}\,.
\end{align}
Similarly, from (\ref{hyperdefdiff}), (\ref{property2}) and
(\ref{property3}), we get that the asymptotic behavior of $G_2$ in
(\ref{finalsec630}) as $\vert\tilde z\vert\rightarrow 1^-$ is
\begin{align}\label{G2behavior}
\frac{dG_2}{d\tilde z}\sim \frac{(1+\delta_1)(\delta_1-\frac{1}{2})}{2\delta_1+1}(1-\tilde z)^{-\frac{1}{2}}\frac{\Gamma(2\delta_2+2)\Gamma(\frac{1}{2})}{\Gamma(2\delta_1+1)\Gamma(\delta_1+\frac{1}{2})}.
\end{align}
Upon combining (\ref{G1behavior}) and (\ref{G2behavior}), we conclude that
$\Psi_{0\bar z}(0)=0$ holds when
\begin{align}\label{Aconstant}
  A=\left(\frac{3}{2}\right)^{1-\delta_1}\frac{\Gamma(1+\delta_1)\Gamma
  \big(\frac{1}{2}+\delta_1\big)}{\big(\frac{1}{2}-\delta_1\big)
  \Gamma(1+2\delta_1)\Gamma\big(\frac{1}{2}\big)}\,.
\end{align}
This gives us an explicit form for $\Psi_0$ in (\ref{finalsec630}).

Next, we focus on the integral constraint in (\ref{localwithnonlocal}).
To begin, we calculate for $\delta_1\ll 1$ that
\begin{align}\label{constraint1}
  \int_0^\infty \bar w^{1+\delta_1}G_1\, d\bar z=&-\frac{3}{2}\int_0^1 \bar
     w^{1+\delta_1}\frac{G_1}{\bar w_{\bar z}}\, d\tilde z =
   \bigg(\frac{3}{2}\bigg)^2(1-\delta_1^2)^{-1}
     \frac{\Gamma(2)\Gamma\big(\frac{1}{2}\big)}{\Gamma\big(\frac{5}{2}\big)}
 {}_4F_3\Big(1,\frac{1}{2},2,2;2-\delta_1,2+\delta_1,\frac{5}{2};1\Big)\nonumber\\
\sim &3(1-\delta_1^2)^{-1}\,,
\end{align}
and
\begin{align}\label{constraint2}
  \int_0^\infty \bar w^{1+\delta_1}G_2 \, d\bar z=&-\frac{3}{2}\int_0^1 \bar
    w^{1+\delta_1}\frac{G_2}{\bar w_{\bar z}}\, d\tilde z\nonumber\\
  =&\bigg(\frac{3}{2}\bigg)^{1+\delta_1}(1-\delta_1^2)^{-1}
     \frac{\Gamma(1+\gamma_1)\Gamma\big(\frac{1}{2}\big)}
     {\Gamma\big(\frac{3}{2}\big)}{}_3F_2\Big(1+\delta_1,\delta_1-\frac{1}{2},
     1+\delta_1;2\delta_1+1,\frac{3}{2}+\delta_1;1\Big)\,.
\end{align}
Moreover, we calculate that
\begin{align}\label{constraint3}
  \int_0^\infty \bar w \, d\bar z=-\frac{3}{2}\int_0^1 \frac{\bar w}
  {\bar w_{\bar z}}\, d\tilde z=\frac{3}{2}\int_0^{1}\frac{1}
  {\sqrt{1-\tilde z}}\, d\tilde z=3\,.
\end{align}
Upon collecting (\ref{constraint1}), (\ref{constraint2}) and
(\ref{constraint3}), we use the constraint in
(\ref{localwithnonlocal}), with $A$ as in (\ref{Aconstant}), to obtain
\begin{align}\label{thresholdeqappen}
  &(1-\delta_1^2)^{-1}{}_4F_3\Big(1,\frac{1}{2},2,2;2-\delta_1,2+\delta_1,
    \frac{5}{2};1\Big)\nonumber\\
  &\qquad + \frac{A}{3}\Big(\frac{3}{2}\Big)^{1+\delta_1}
    \frac{\Gamma(1+\delta_1)\Gamma\big(\frac{1}{2}\big)}
    {\Gamma\big(\frac{3}{2}+\delta_1\big)}{}_3F_2\Big(1+\delta_1,\delta_1
    -\frac{1}{2},1+\delta_1;2\delta_1+1,\frac{3}{2}+\delta_1;1\Big)
    =\frac{3}{\bar{\kappa}} \,.
\end{align}

As a partial verification of our computation, if we let $\delta_1=0$
then (\ref{thresholdeqappen}) yields that $\bar{\kappa}=1$. This
agrees precisely with our leading order threshold $\alpha_{c}\sim 1.$
To seek a refined approximation of this threshold, as obtained by the
next order term of $\alpha_c$, we expand (\ref{thresholdeqappen}) up
to $O(\delta_1).$ To do so, we use the standard result in
\cite{buhring1987behavior} to find
\begin{align}\label{F323}
  {}_3F_2\Big(1+\delta_1,\delta_1-\frac{1}{2},1+\delta_1;2\delta_1+1,
    \frac{3}{2}+\delta_1;1\Big)
   = \frac{\Gamma(b_1)\Gamma(b_2)}{\Gamma(a_3)\Gamma(a_1+1)
    \Gamma(a_2+1)}{}_3F_2\Big(\delta_1,\frac{1}{2},1;2+\delta_1,\frac{1}{2}+
    \delta_1;1\Big)\,, 
\end{align}
where $a_1=1+\delta_1$, $a_2=\delta_1-{1/2}$, $a_3=1+\delta_1$,
$b_1=2\delta_1+1$, and $b_2=\delta_1+{3/2}$.

Next, we expand
\begin{subequations}\label{F321}
\begin{align}
   \frac{\Gamma(b_1)\Gamma(b_2)}{\Gamma(a_3)\Gamma(a_1+r)\Gamma(a_2+r)} &=
  \frac{\Gamma(1+2\delta_1)\Gamma(\frac{3}{2}+\delta_1)}
  {\Gamma(1+\delta_1)\Gamma(2+\delta_1)\Gamma(\frac{1}{2}+\delta_1)}
  = \frac{\frac{1}{2}+\delta_1}{1+\delta_1}+{\mathcal O}(\delta_1^2)\,,\\
  {}_3F_2\Big(\delta_1,\frac{1}{2},1;2+\delta_1,\frac{1}{2}+\delta_1;1\Big)&=
  1+\delta_1+{\mathcal O}(\delta_1^2)\,.
\end{align}
\end{subequations}
Upon substituting (\ref{F321}) into (\ref{F323}), we conclude that
\begin{align}\label{F324}
  {}_3F_2\Big(1+\delta_1,\delta_1-\frac{1}{2},1+\delta_1;2\delta_1+1,
  \frac{3}{2}+\delta_1;1\Big)=\frac{1}{2}
  \left[1+2\delta_1+{\mathcal O}(\delta_1^2)\right] =\frac{1}{2}+\delta_1 +
  {\mathcal O}(\delta_1^2)\,.
\end{align}
Then, by using the identity
${\Gamma^2(1+\delta_1)/\Gamma(1+2\delta_1)}=1+{\mathcal O}(\delta_1^2)$.
we substitute (\ref{F324}) into (\ref{thresholdeqappen}), and recall
that $\delta_1={\delta/2}$ where $\delta={2/(v_{\max 0}\bar\chi)}$. This
yields
\begin{align}\label{appendix643}
  {\bar \kappa}=1-\delta_1+{\mathcal O}(\delta_1^2)=1-\frac{\delta}{2}+
  {\mathcal O}(\delta^2) = 1-\frac{1}{\bar\chi v_{\max 0}} +
  {\mathcal O}(v_{\max 0}^{-2}) \,.
\end{align}
Finally, by relating $\bar{\kappa}$ to $\alpha$ using
(\ref{psi0barzbarz}), and noting the identity
$4\int_{-\infty}^\infty w^2\, d\bar{z}=3\int_{-\infty}^\infty w^4\, d\bar{z}$,
we conclude that (\ref{appendix643}) provides the following
refined threshold at which $\lambda_0=0$, which completes the proof of
Theorem \ref{theorem1}:
\begin{align}
\alpha_{c}\sim 1-\frac{3}{2\bar\chi v_{\max 0}}\,.
\end{align}

\section{Computation of Partial Derivatives for Quasi-Equilibria}\label{appendix:vderiv}

In this appendix, we derive an approximation for ${d v_{\max j}/ds_j}$
from our quasi-equilibrium construction, and we calculate some related
partial derivatives that are needed in our analysis. From
(\ref{algebraicreal}), $v_{\max j}$ and $C_j$ satisfy
\begin{align}\label{largejacobi}
  v_{\max j}^2=\frac{2 C_{j}}{\bar\chi } e^{\bar\chi v_{\max j}} - 
  \frac{2 s_j}{\bar\chi}+s^2_j\,, \qquad C_je^{\bar\chi s_j}=s_j \,.
\end{align}
Upon differentiating the equation for $v_{\max j}$ with respect to $s_j$,
and labeling  $v_{\max j}^{\prime}:= {d v_{\max j}/d s_j}$, we get
\begin{align}\label{jacobian1}
  2v_{\max k}v_{\max j}^{\prime}=(2e^{-\bar\chi s_j}-2\bar\chi s_je^{-\bar\chi s_j})
  \frac{e^{\bar\chi v_{\max j}}}{\bar\chi}+2s_je^{-\bar\chi s_j}e^{\bar\chi v_{\max j}}
  v_{\max j}^{\prime}-\frac{2}{\bar\chi}+2s_j \,,
\end{align}
We solve for $v_{\max j}^{\prime}$ in (\ref{jacobian1}), while eliminating $C_j$ in
(\ref{largejacobi}). After some algebra we obtain
\begin{align}\label{diffvmaxj}
  v_{\max j}^{\prime}=\frac{
  \left( {v_{\max j}^2/s_j} - \bar{\chi} v_{\max j}^2 +\bar{\chi} s_j^2 -s_j\right)}
  {2 v_{\max j} - \bar{\chi} v_{\max j}^2 - 2 s_j + \bar{\chi} s_j^2}\,.
\end{align}
Since $s_j={\mathcal O}(\epsilon |\log\epsilon|^3)$ and
$v_{\max j}={\mathcal O}(|\log\epsilon|)$, we obtain upon retaining only
the first term in the numerator and the first two terms in the denominator
that for $\epsilon \to 0$
\begin{align}\label{diffvmaxj_f}
  v_{\max j}^{\prime}=\frac{dv_{\max j}}{d s_j} \sim -\frac{\zeta_{\max j}}
  {\bar{\chi} s_j}  \,, \qquad
  \zeta_{\max j}:= \left(1 - \frac{2}{\bar{\chi} v_{\max j}}\right)^{-1}\,.
\end{align}
This result (\ref{diffvmaxj_f}) is needed in (\ref{jac:balance}) for
analyzing the Jacobian of the quasi-equilibrium construction.

In a similar way, by taking the partial derivative of $v_{\max k}$
with respect to the location $x_i$ of the $i^{\mbox{th}}$ spike in the
quasi-equilibrium pattern, we readily derive the following result
for $\epsilon \to 0$ that is needed in (\ref{Fj;xi}) and (\ref{dae3:vk_xi}):
\begin{align}\label{diffvmaxi}
  \partial_{x_i} v_{\max k} \sim -\frac{\zeta_{\max k}}
  {\bar{\chi} s_k}\partial_{x_i} s_{k}  \,, \qquad \zeta_{\max k}=
  \left(1 - \frac{2}{\bar{\chi} v_{\max k}}\right)^{-1}\,.
\end{align}

\section{Calculation of $\mathcal G_g$ and $\mathcal P_g$}\label{appensecB}
  
In this appendix, for $d_1\in {\mathcal T}_e$, we calculate the matrix
spectrum of ${\mathcal G}_g$, as given in \eqref{matrixGg}, as well as the
matrix ${\mathcal P}_g$ that was defined in (\ref{matrixPg}).  To do so, for
$d_1\in {\mathcal T}_e$, we introduce the auxiliary BVP
\begin{align}\label{bvpydipole}
         \frac{d_1}{\mu}y^{\prime\prime} +\bar u y=0\,, \quad 1<x<1\,; \quad
       y^{\prime}(\pm 1)=0\,;\quad \Big[\frac{d_1}{\mu}y\Big]_j=b_j\,,\quad
      \Big[\frac{d_1}{\mu}y^{\prime}\Big]_j=0\,,
\end{align}
for $j=1,\ldots,N$. Here $[y]_j:=y(x_j^+)-y(x_j^-)$ with $x_j=x_j^0$
as given by (\ref{locations}).  The solution to (\ref{bvpydipole}) is
$y=\sum_{k=1}^N b_k g(x;x_k)$, where the dipole Green's function
$g(x;x_k)$ satisfies (\ref{green:dipole_small}). Upon defining
$\boldsymbol{y}^{\prime}:=\left(y^{\prime}_{1},\ldots,y^{\prime}_{N}\right)^T$,
$\boldsymbol{\langle y\rangle}:=\left(\langle y\rangle_1,\ldots,
  \langle y\rangle_N\right)^T$, and $\boldsymbol{b}:=\left(b_1,\ldots,b_N
  \right)^T$, where $y^{\prime}_j=y^{\prime}(x_j)$ and
$\langle y\rangle_j=\big(y(x_j^+)+y(x_j^-)\big)/2$, we conclude that
\begin{align}\label{yprimeandomegadipole}
  \boldsymbol{y}^{\prime}=\mathcal G_g\boldsymbol{b}\,, \qquad
  \langle\boldsymbol{y}\rangle =\mathcal P_g\boldsymbol{b}\,.
\end{align}

The inverses of $\mathcal G_g$ and $\mathcal P_g$ exist and are
tridiagonal when $d_1\in {\mathcal T}_e$. To show this, we solve
(\ref{bvpydipole}) on each subinterval where we impose the continuity
conditions on $y^{\prime}$ across $x_j$. This yields that
\begin{align}\label{analyticydipole}
y=\left\{\begin{array}{ll}
   -\frac{y_1^{\prime}}{\theta}\frac{\cos[\theta(1+x)]}{\sin[\theta(1+x_1)]}\,,
           &-1<x<x_1\,,\\
  \frac{y_j^{\prime}}{\theta}\frac{\cos[\theta(x_{j+1}-x)]}
    {\sin[\theta(x_{j+1}-x_j)]}-\frac{y_{j+1}^{\prime}}{\theta}
     \frac{\cos[\theta(x-x_j)]}{\sin[\theta(x_{j+1}-x_j)]}\,,&x_j<x<x_{j+1}\,,
             \quad j=1,\ldots,N-1\,,\\
  \frac{y_{N}^{\prime}}{\theta}\frac{\cos[\theta(1-x)]}{\sin[\theta(1-x_{N})]}
           \,,&x_{N}<x<1\,,
\end{array}
\right.
\end{align}
where $\theta=\sqrt{\mu \bar u/d_1}.$  By using
(\ref{analyticydipole}), we satisfy the jump conditions in
(\ref{bvpydipole}) to get
\begin{align}\label{appb:dgtogg}
  \mathcal D_g\boldsymbol{y}^{\prime}=\frac{\mu\theta}{d_1}\boldsymbol{b}\,,
  \qquad \mathcal G_g=\frac{\mu\theta}{d_1} \mathcal D_g^{-1}\,.
\end{align}
Here, for $d_1\in {\mathcal T}_e$,  $\mathcal D_g$ is the invertible
tridiagonal matrix defined by
\begin{align}\label{mathcalDg}
\mathcal D_g=\left( \begin{array}{ccccccc}
  d_g & f_g&0&\cdots &0 &0& 0\\
  f_g & e_g&f_g&\cdots &0 &0& 0\\
   0 & f_g&e_g&\ddots &0 &0& 0\\
     \vdots & \vdots&\ddots&\ddots &\ddots &\vdots& \vdots\\
    0 & 0&0&\ddots &e_g &f_g& 0\\
  0 & 0&0&\cdots &f_g &e_g& f_g\\
   0 & 0&0&\cdots &0 &f_g& d_g\\
    \end{array}
  \right)\,.
\end{align}
where $d_g=\cot\left({2\theta/N}\right)+\cot\left({\theta/N}\right)$,
$e_g=2\cot\left({2\theta/N}\right)$ and
$f_g=-\csc\left({2\theta/N}\right)$, for which the identity
$d_g=e_g-f_g$ holds.  When $d_1\in {\mathcal T}_e$ (see
(\ref{d1:admiss})), i.e.~${2\theta/N}<\pi$, we see that $e_g$, $d_g$
and $f_g$ are well-defined.

Similarly, we rewrite $\boldsymbol{\langle y\rangle}$ in terms of
$\boldsymbol{y}^{\prime}$ as $\boldsymbol{\langle y\rangle }=-
(2\theta)^{-1}\csc\left({2\theta/N}\right)\mathcal C\boldsymbol{y}^{\prime}$.
where ${\mathcal C}$ was defined in (\ref{app:matc}).  By combining
the second equation in (\ref{yprimeandomegadipole}) with this result
we obtain for $d_1\in {\mathcal T}_e$ that
\begin{align}\label{matrixPgrepeat}
    \mathcal P_g=-\frac{\mu}{2d_1}\csc\left(\frac{2\theta}{N}\right)
    \mathcal C\mathcal D_g^{-1}\,.
  \end{align}
  The matrix spectrum of the tridiagonal matrix ${\mathcal D}_g$,
  labeled by $\mathcal D_g \boldsymbol{v}=\xi \boldsymbol{\nu}$ where
  $\boldsymbol{\nu}=(\nu_1,\ldots,\nu_N)^T$, is readily calculated as
  in \cite{iron2001stability} and the result is summarized in
  Proposition \ref{prop41}.

Finally, when $\lambda_0=0$, we establish a key identity
\begin{align}\label{app:pt_pg}
  {\mathcal P}^T = - {\mathcal P}_g \,,
\end{align}
which relates (\ref{matrixGg}) for ${\mathcal P}$ when $\lambda_0=0$
to (\ref{matrixPg}). One way to derive this identity is to observe
from (\ref{def}) that when $\lambda_0=0$, we have $e_g = - e$,
$f_g = -f$, and $d_g=-e+f$. By using these expressions in
(\ref{mathcalDg}) a direct matrix multiplication yields the identity
${\mathcal C} {\mathcal D}_g = - {\mathcal D} {\mathcal C}$, where
${\mathcal D}$ and ${\mathcal C}$ are defined in (\ref{inverseG}) and
(\ref{app:matc}), respectively. The result (\ref{app:pt_pg}) follows
by comparing (\ref{matrixPgrepeat}) and (\ref{matrixP}), and noting
that ${\mathcal D}$ and ${\mathcal D}_g$ are symmetric.

\section{Diagonalization of the Matrix ${\mathcal M}$ for
  the Small Eigenvalues}\label{appensecC}

In this appendix, when $d_1\in {\mathcal T}_e$, we show how to
diagonalize the matrix ${\mathcal M}$ in (\ref{jacobianbutinsmallep})
to obtain the result given in Proposition \ref{prop:small} for the
small eigenvalues. From (\ref{jacobianbutinsmallep}), the
matrix for the small eigenvalues is
\begin{align}\label{appc:magain}
    \mathcal M=\frac{2\bar\chi}{3}v_{\max 0}^3\mathcal G_g-
  \frac{2v_{\max 0}^2\zeta_0}{a_g}\mathcal P\left(I+\frac{3\zeta_0}
  {\bar\chi a_gv_{\max 0}}
  \mathcal G\right)^{-1}\mathcal P_g+\frac{s_0 \bar u\mu}{\epsilon d_1} I\,,
  \qquad \zeta_0:=\left(1-\frac{2}{\bar{\chi}v_{\max 0}}\right)^{-1}\,.
\end{align}

We begin by focusing on the middle term in ${\mathcal M}$. We first
introduce the matrix decomposition of ${\mathcal D}$ by
${\mathcal D} = {\mathcal Q} {\mathcal K} {\mathcal Q}^T$, where
${\mathcal K}=\mbox{diag}(\kappa_1,\ldots,\kappa_N)$ and
${\mathcal Q}$ is the orthogonal matrix formed from the eigenvectors
$\boldsymbol{q}_j$ in Proposition \ref{prop31} when $\tau=0$. For
$\tau=0$, the eigenvalues $\kappa_j$ of ${\mathcal D}$ are related
to the eigenvalues $\xi_j$ of ${\mathcal D}_g$ by
\begin{equation}\label{map:ktoxi}
  \kappa_1 =2\tan\left({\theta/N}\right) \,, \quad
  \kappa_j=-\xi_j= -2\cot\left({2\theta/N}\right) +
  2 \csc\left({2\theta/N}\right)\cos\left({\pi (j-1)/N}\right)
  \,, \quad j=2,\ldots,N\,.
\end{equation}
By using (\ref{key:G_to_D}) with $\lambda_0=0$, we obtain that
${\mathcal G} = \sqrt{\frac{\mu}{d_1\bar{u}}} {\mathcal Q}
\mathcal{K}^{-1} {\mathcal Q}^T$, which yields
\begin{align}\label{appc:ginv}
  {\mathcal P}  \left(I+\frac{3\zeta_0}{\bar\chi a_gv_{\max 0}}\mathcal
  G\right)^{-1} {\mathcal P}_g =
 {\mathcal P} {\mathcal Q} \left( I+\frac{3\zeta_0}{\bar\chi a_gv_{\max 0}}
  \sqrt{\frac{\mu}{\bar u d_1}}{\mathcal K}^{-1} \right)^{-1}Q^T
  {\mathcal P}_g\,.
\end{align}

Next, we use (\ref{matrixPgrepeat}) and (\ref{matrixP}) to conclude
that ${\mathcal P}{\mathcal D}=\left({\mathcal P}_g{\mathcal D}_g\right)^T$
so that
\begin{align}\label{appc:ptopg}
  {\mathcal P} = \left({\mathcal P}_g{\mathcal D}_g\right)^T {\mathcal D}^{-1}
  =\left({\mathcal P}_g{\mathcal D}_g\right)^T {\mathcal Q}
  {\mathcal K}^{-1} {\mathcal Q}^T \,.
\end{align}
By combining (\ref{appc:ptopg}) and (\ref{appc:ginv}), and using
${\mathcal Q}{\mathcal Q}^T=I$, we get
\begin{align}\label{appc:ginv_2}
  {\mathcal P}  \left(I+\frac{3\zeta_0}{\bar\chi a_gv_{\max 0}}\mathcal
  G\right)^{-1} {\mathcal P}_g = {\mathcal R}  {\mathcal D_g}^{-1}\,,
  \qquad \mbox{where} \quad   {\mathcal R} :=
  \left({\mathcal P}_g{\mathcal D}_g\right)^T
  {\mathcal Q} {\mathcal H} {\mathcal Q}^T
  \left({\mathcal P}_g{\mathcal D}_g\right)\,.
\end{align}
Here ${\mathcal R}$ is defined in terms of a diagonal matrix
${\mathcal H}$ given by
\begin{align}\label{appc:hmat}
  {\mathcal H} :=
  \left(\frac{3\zeta_0}{\bar\chi a_gv_{\max 0}}
  \sqrt{\frac{\mu}{\bar u d_1}} I + {\mathcal K}\right)^{-1}=
  \mbox{diag}(h_1,\ldots,h_N) \,.
\end{align}
Therefore, by using ${\mathcal G}_g=\frac{\mu\theta}{d_1}{\mathcal D}_g^{-1}$
from (\ref{appb:dgtogg}), together with (\ref{appc:ginv_2}),
we can write (\ref{appc:magain}) as
\begin{align}\label{appc:mnew}
  \mathcal M=\left( \frac{2\bar\chi}{3}v_{\max 0}^3 \frac{\mu\theta}{d_1}
  I + \frac{s_0 \bar u\mu}{\epsilon d_1} {\mathcal D_g} -
  \frac{2v_{\max 0}^2\zeta_0}{a_g} {\mathcal R} \right) {\mathcal D}_g^{-1}
  \,.
\end{align}

Next, we must focus on analyzing the matrix ${\mathcal R}$ defined by
(\ref{appc:ginv_2}). By using (\ref{matrixPgrepeat}), we obtain
\begin{align*}
  \left( {\mathcal P}_g {\mathcal D}_g\right)^T =
  -\frac{\mu}{2d_1} \csc\left(\frac{2\theta}{N}\right)
    \mathcal C\,,
\end{align*}
where ${\mathcal C}$ is given in (\ref{app:matc}). In this way,
it is convenient to write ${\mathcal R}$ as
\begin{align*}
  {\mathcal R} = \frac{\mu^2}{4d_1^2} \csc^{2}\left(\frac{2\theta}{N}\right)
                 {\mathcal C}^T {\mathcal Q} {\mathcal H} {\mathcal Q}^T
                 {\mathcal C} =
     \frac{\mu^2}{4d_1^2} \csc^{2}\left(\frac{2\theta}{N}\right)
      {\mathcal Q}_g {\mathcal Q}_g^T
      {\mathcal C}^T {\mathcal Q} {\mathcal H} {\mathcal Q}^T
      {\mathcal C} {\mathcal Q}_g {\mathcal Q}_g^T \,,
\end{align*}
where ${\mathcal Q}_g$ are the normalized eigenvectors of ${\mathcal D}_g$
(see Proposition \ref{prop41}), arising in the matrix
decomposition
\begin{align}\label{appc:dg_decom}
  {\mathcal D}_g = {\mathcal Q}_g {\mathcal K}_g {\mathcal Q}_g^T \,,
  \qquad {\mathcal K}_g =\mbox{diag}(\xi_1,\ldots,\xi_N)\,,
\end{align}
where $\xi_j$ are the eigenvalues of ${\mathcal D}_g$ as given in
Proposition \ref{prop41}. In this way, we can write
${\mathcal R}$ as 
\begin{align}\label{appc:rfinal}
  {\mathcal R} = {\mathcal Q}_g \Sigma {\mathcal Q}_g^T \,, \qquad
  \mbox{where} \quad    \Sigma :=  \frac{\mu^2}{4d_1^2}
  \csc^{2}\left(\frac{2\theta}{N}\right)
  {\mathcal S} {\mathcal H} {\mathcal S}^T \,, \quad
  {\mathcal S}:={\mathcal Q}_g^{T} {\mathcal C}^T
  {\mathcal Q} \,.
\end{align}

The key step in the analysis is the calculation of
$\Sigma$ in (\ref{appc:rfinal}) using the explicit forms for the matrices
${\mathcal Q}_g$, ${\mathcal C}$, and ${\mathcal Q}$, as was done in
section 4.2 of \cite{iron2001stability}. This calculation in
\cite{iron2001stability} showed that $\Sigma$ is a diagonal matrix
given by
\begin{align}\label{appc:sigma_eig}
  \Sigma = \mbox{diag}(\omega_1,\ldots,
  \omega_N) \,, \qquad \mbox{where} \qquad
  \omega_j := \frac{\mu^2}{d_1^2}\csc^2\left(\frac{2\theta}{N}\right)
  \sin^2\left(\frac{(j-1)}{N}\pi\right) h_j \,, \quad j=1,\ldots,N \,.
\end{align}
Here $h_j$, for $j=1,\ldots,N$, are the diagonal entries of
${\mathcal H}$ that can be identified from (\ref{appc:hmat}).

Upon substituting (\ref{appc:rfinal}) and
${\mathcal D}_g^{-1}= {\mathcal Q}_g {\mathcal K}_g^{-1} {\mathcal Q}_g^T$ into
(\ref{appc:mnew}), and recalling (\ref{433insection4}), we obtain that the
matrix eigenvalue problem for the small eigenvalues reduces to
\begin{align}\label{appc:lam_mat}
  \lambda \boldsymbol{c}\sim -\epsilon^3\beta_0\mathcal M\boldsymbol{c}\,,
  \qquad \mbox{where} \quad
  \mathcal M={\mathcal Q}_g \left( a {\mathcal K}_g^{-1} + b I -
  \frac{2v_{\max 0}^2\zeta_0}{a_g} \Sigma {\mathcal K}_g^{-1} \right)
  {\mathcal Q}_g^{T} \,.
\end{align}
This key result shows that ${\mathcal M}$ is diagonalizable by
the eigenspace ${\mathcal Q}_g$ of the Green's dipole matrix.  In
(\ref{appc:lam_mat}), 
\begin{align}\label{appc:ab}
  a:= \frac{2 \bar\chi }{3} v_{\max 0}^3
  \left(\frac{\mu\theta}{d_1}\right) \,, \qquad b: =
  \frac{s_0\bar u \mu}{\epsilon d_1} =
  \frac{2\bar\chi}{3} v_{\max 0}^3 \left(\frac{a_g{\bar u}\mu}{d_1}\right)\,,
\end{align}
where we have used the result
$s_0\sim {2\bar{\chi}a_gv_{\max 0}^3\epsilon/3}$ from (\ref{ag}) to
simplify $b$.

Finally, by introducing
$\boldsymbol{\tilde c}=\mathcal Q_{g}^T \boldsymbol{c}$ in
(\ref{appc:lam_mat}), we readily obtain from (\ref{appc:ab}) that
the small eigenvalues are given explicitly as
in (\ref{prop:small_eig}) of Proposition \ref{prop:small}. The
constants $\omega_j$, as given in (\ref{prop:small_omegaj}), are
obtained from (\ref{appc:sigma_eig}) by using the diagonal entries of
${\mathcal H}$ that can be identified from (\ref{appc:hmat}) and
(\ref{map:ktoxi}).

\section{Bifurcation Point for the Emergence of Asymmetric Steady-States}
\label{app:asymmetric}

In this appendix we verify that the simultaneous zero-eigenvalue
crossing threshold for the small eigenvalues, as given in
(\ref{hj:zero}), coincides with the bifurcation point at which
asymmetric steady-state solution branches bifurcate from the symmetric
steady-state branches constructed in \S \ref{sec2}.

To do so, we proceed in a similar way as in \cite{asymm} by constructing 
a steady-state solution of (\ref{timedependent}) on a canonical domain
$|x|\leq \ell$, with $u_x=v_x=0$ at $x=\pm \ell$ and with a spike centered at
$x=0$. On this domain, the leading-order outer solution $u_{o\ell}(x)$ satisfies
(see (\ref{outerproblem}))
\begin{align}\label{app:outerproblem}
  {\mathcal L}_{0\ell} u_{o\ell} := \frac{d_1}{\mu}u_{o\ell xx}+\bar u
  u_{o\ell}=\frac{2\bar\chi\epsilon}{3 } v_{\max \ell}^3 \, \delta(x) \,, \quad
  |x|\leq \ell\,; \qquad u_{o\ell x}(\pm \ell)=0 \,,
\end{align}
where, in analogy with (\ref{vm:dominant}), $v_{\max \ell}$ satisfies the
dominant balance
\begin{equation}\label{app:vm_eq}
  \frac{1}{2}v_{\max \ell}^2 \sim \frac{s_{\ell}}{\bar{\chi}}
  e^{\bar{\chi}v_{\max \ell}} \,,
  \qquad \mbox{with} \quad s_{\ell}=u_{o\ell}(0) \,.
\end{equation}
To solve (\ref{app:outerproblem}) we let $G_{\ell}(x)$ be the Green's
function satisfying ${\mathcal L}_{0\ell} G_{\ell}=\delta(x)$, with
$G_{\ell x}(\pm \ell)=0$.
For $\theta\neq {m\pi/\ell}$ with $m=1,2,\ldots$, where
$\theta=\sqrt{{\mu \bar{u}/d_1}}$, we obtain that
\begin{equation}\label{app:u0l}
  u_{o\ell}(x)=\frac{2\bar\chi}{3}\epsilon v_{\max \ell}^3 \, G_{\ell}(x) \,, \qquad
  \mbox{where} \quad
  G_{\ell}(x)=\frac{\mu \cos\left[\theta(\ell-|x|)\right]}{2\theta d_1
    \sin(\theta \ell)}\,.
\end{equation}
By evaluating (\ref{app:u0l}) at $x=0$ we can calculate $s_{\ell}$, which
is needed in (\ref{app:vm_eq}) for determining $v_{\max \ell}$. In this way,
we obtain after some algebra that at $x=\ell$
\begin{subequations}\label{app:final}
\begin{equation}\label{app:final_1}
  u_{o\ell}(\ell) = c {\mathcal B}(\ell) \,, \qquad \mbox{where} \quad
  {\mathcal B}(\ell):=
  \frac{v_{\max \ell}^3}{\sin(\theta \ell)} \,, \quad c: = \frac{\eps \bar{\chi}}
  {3\bar{u}} \sqrt{\frac{\mu \bar{u}}{d_1}} \,.
\end{equation}
Here $v_{\max \ell}$ as a function of
$\ell$ satisfies the nonlinear algebraic equation
\begin{equation}\label{app:final_2}
  v_{\max \ell} e^{\bar{\chi} v_{\max \ell}} \cot(\theta \ell) =
  {\bar{\chi}/(2c)}\,.
\end{equation}
\end{subequations}

As similar to the analysis in \cite{asymm} for the GM model, the
construction of asymmetric steady-state patterns for
(\ref{timedependent}) relies on determining $\ell_1$ and
$\ell_2$ for which ${\mathcal B}(\ell_1)={\mathcal
  B}(\ell_2)$. As a result, we have $u_{o\ell}(\ell_1)=u_{o\ell}(\ell_2)$, which
allows for the construction of a $C^{1}$ global solution on $|x|\leq 1$
with $M_1$ and $M_2$ small and large spikes, respectively, when the length
constraint $\ell_1 M_1 + \ell_2 M_2=1$ is satisfied (cf.~\cite{asymm}).

The bifurcation point along the steady-state symmetric branch where
such asymmetric equilibria emerge is determined by setting
${\mathcal B}^{\prime}(\ell)=0$ with $\ell={1/N}$. From (\ref{app:final_1})
and the logarithmic derivative of (\ref{app:final_2}) we get
\begin{equation}\label{app:bp}
  {\mathcal B}^{\prime}(\ell) = \frac{v_{\max \ell}^2}{\sin(\theta \ell)}
  \left[ 3 v_{\max \ell}^{\prime} - \theta v_{\max \ell} \cot(\theta \ell)\right]\,,
  \qquad   v_{\max \ell}^{\prime}\left(1 + \frac{1}{\bar{\chi} v_{\max \ell}}\right)
  = \frac{\theta}{\bar{\chi} \sin(\theta \ell)\cos(\theta \ell)} \,.
\end{equation}
Upon combining these two equations we conclude that
\begin{equation}\label{app:bprime}
  {\mathcal B}^{\prime}(\ell) = \frac{\theta \, v_{\max \ell}^3}{
    \sin^2(\theta \ell)\cos(\theta \ell)} \left[
    \frac{3}{1+ \bar{\chi} v_{\max \ell}} - \cos^2(\theta \ell)\right]
  \,.
\end{equation}
By setting ${\mathcal B}^{\prime}(\ell)=0$ with $\ell={1/N}$, and using the
double-angle formula for $\cos^{2}(\theta \ell)$, we readily obtain that
the threshold value of $\theta$ is
\begin{equation}
  \cos\left( \frac{2\theta}{N}\right) = \frac{1-a_1}{1+a_1} \,, \qquad
  \mbox{where} \quad a_1 = \frac{1}{3} \left(\bar{\chi} v_{\max} -2\right)\,.
\end{equation}
This threshold agrees precisely with the zero-eigenvalue crossing
result (\ref{hj:zero}) for the small eigenvalues.

\section{Computation of $\beta_0$ and $\beta_j$}\label{appendixFfinal}

In this appendix, we show how to obtain the estimate (\ref{betajvalue})
for $\beta_j$, where $\beta_j$ was defined in (\ref{solvability1}) of \S
\ref{sec5}. For simplicity, in the analysis below we will drop the
subscript $j$ in $V_{0 j}$, $v_{\max j}$, $C_j$, $s_j$, and
$v_{\max j}$.

We begin by recalling from (\ref{Hamiltonian}) that the leading order
steady state $v$-equation for the spike profile is
\begin{align}\label{appendixFfinaleq1}
  V_0^{\prime\prime}-V_0+C e^{\bar\chi V_0}=0\,, \qquad -\infty<y<+\infty\,;
  \qquad V_0(0)=v_{\max} \,, \quad V_0(\infty)=s \,,
\end{align}
where $v_{\max }^2= 2C e^{\bar\chi v_{\max}}-2 s+ s^2$ and
$C=se^{-\bar\chi s}$.

From the results in Proposition \ref{prop1} for the sub-inner region,
we conclude that there exists a positive constant
$y_0={\mathcal O}\left({1/v_{\max}}\right)\ll 1$ such that 
\begin{equation*}
  V_0 \sim v_{\max}+\frac{1}{\bar\chi}
        \log\left[ \sech^2\left( \frac{v_{\max}\bar\chi y}{2} \right)
        \right] \,, \quad 0<y<y_0 \,; \qquad
  U_0 \sim \frac{\bar\chi }{2} v_{\max}^2\sech^2\Big(\frac{v_{\max}\bar\chi y}{2}
        \Big) \,, \quad 0<y<y_0 \,.
\end{equation*}
The decay behavior of $U_0$ and $V_0$ is obtained by noting that
$V_0^{\prime\prime}-V_0+\bar{\chi}s V_0\approx 0$ for $y>y_0$. Since
$s\ll 1$, this yields $V_{0}^{\prime\prime}-V_0\approx 0$. With this
observation, and by enforcing continuity across $y=y_0$, we estimate
\begin{align}\label{appF:V0}
V_0\sim\left\{\begin{array}{ll}
                v_{\max}+\frac{1}{\bar\chi}\log\left[\sech^2
                \left(\frac{v_{\max}\bar\chi y}{2}\right)
          \right]\,,&y<y_0\,,\\
v_{\max}e^{-(y-y_0)}+\frac{1}{\bar\chi}\log \left[\sech^2\Big(\frac{v_{\max}\bar
              \chi y_0}{2}\Big)\right]\,,&y>y_0\,,
\end{array}
                                           \right.\,, \qquad
V_{0}^{\prime}\sim\left\{\begin{array}{ll}
-v_{\max}\tanh(\frac{v_{\max}\bar\chi y}{2})\,,&y<y_0\,,\\
-v_{\max}e^{-(y-y_0)}\,,&y>y_0\,.
\end{array}
\right.
\end{align}
Moreover, since $U_0=Ce^{\bar\chi V_0},$ we obtain in a similar way that
\begin{align}\label{appF:U0}
U_0\sim\left\{\begin{array}{ll}
     \frac{\bar\chi }{2}v_{\max}^2\sech^2\Big(\frac{v_{\max}\bar\chi y}{2}\Big)
           \,,&y<y_0\,,\\
    Ce^{\bar\chi v_{\max}e^{-(y-y_0)}}\Big(\sech^2\Big(\frac{v_{\max}\bar\chi y_0}
            {2}\Big)\Big)\, ,&y>y_0\,.
\end{array}
\right.
\end{align}

By using (\ref{appF:U0}) we calculate that
\begin{align*}
\int_0^y \frac{1}{U_0}\, d\xi\sim \left\{\begin{array}{ll}
       \frac{2}{\bar\chi v_{\max}^2}\Big(\frac{y}{2}+
     \frac{\sinh(v_{\max}\bar\chi y)}{2\bar\chi v_{\max}}\Big)\,, &y<y_0\,,\\
\frac{2}{\bar\chi v_{\max}^2}\Big(\frac{y_0}{2}+\frac{\sinh(v_{\max}\bar\chi y_0)}{2\bar\chi v_{\max}}\Big)+\frac{1}{s}(y-y_0)\,, &y>y_0\,.
\end{array}
\right.
\end{align*}
Then, upon multiplying by $U_0$, we obtain
\begin{align}\label{appF:t1}
U_0\int_0^y \frac{1}{U_0}\, d\xi\sim \left\{\begin{array}{ll}
\frac{y}{2} \sech^2(\frac{v_{\max}\bar\chi y}{2})+\frac{1}{2v_{\max}\bar\chi}\tanh(\frac{v_{\max}\bar\chi y}{2})\sech(\frac{v_{\max}\bar\chi y}{2})\,, &y<y_0\,,\\
(y-y_0)+\frac{2C}{\bar\chi v_{\max}^2}e^{\bar\chi v_{\max}e^{-(y-y_0)}}\sech^2\Big(\frac{v_{\max}\bar\chi y_0}{2}\Big)\Big(\frac{y_0}{2}+\frac{\sinh(v_{\max}\bar\chi y_0)}{2\bar\chi v_{\max}}\Big)\,, &y>y_0\,.
\end{array}
\right.
\end{align}

By multiplying (\ref{appF:t1}) with $V_0^{\prime}$ from (\ref{appF:V0})
and integrating, we observe that the dominant contribution to the
integrand arises from multiplying the $y-y_0$ term in (\ref{appF:t1})
with the $-v_{\max}e^{-(y-y_0)}$ term in (\ref{appF:V0}). In this way,
\begin{align*}
  \int_0^\infty U_{0}V_{0}^{\prime}\Big(\int_0^y\frac{1}{U_{0}}\, d\xi\Big)\, dy\sim
  -v_{\max}\int_{y_0}^{+\infty} e^{-(y-y_0)} (y-y_0)\, dy\sim -v_{\max}\,.
\end{align*}
In a similar way, we estimate that
$\int_{0}^{+\infty}\left(V_{0}^{\prime}\right)^2\, dy\sim v^2_{\max}
  \int_{y_0}^{+\infty} e^{-2(y-y_0)}\, dy\sim {v_{\max}^2/2} \,.
$
We conclude from (\ref{solvability1}) that $\beta_j\sim {2/v_{\max }}$, as
was claimed in (\ref{betajvalue}).

Next, we recall from (\ref{433insection4}) in
our analysis of the small eigenvalues that
$\beta_0=-{\int_0^\infty yV_0^{\prime}\, dy/\int_0^\infty
  \left(V_{0}^{\prime} \right)^2 \, dy}$. By using (\ref{appF:t1}) and
(\ref{appF:V0}), we can readily verify that
$$\int_0^\infty yV_0^{\prime}\, dy\sim \int_0^\infty U_{0}V_{0}^{\prime}
\Big(\int_0^y\frac{1}{U_{0}}\, d\xi\Big) \, dy\,,$$
which establishes that $\beta_j\sim \beta_0$ when evaluated at the
steady-state solution.

\section{The Equivalence Between Some Matrices}\label{appendixG}

In this appendix, we show the relationship between the matrices
\begin{align}\label{appG:g_mats}
  \nabla \mathcal G:=(\partial_{x_j} G(x_j^0;x_k^0))_{N\times N}\,, \qquad
  (\nabla \mathcal G)^T:=(\partial_{x_k}G(x_j^0;x_k^0))_{N\times N}\,, \qquad
  \nabla^2 \mathcal G:=(\partial_{x_j}\partial_{x_k} G(x_j^0;x_k^0))_{N\times N}\,,
\end{align}
used in the linearization of the DAE system and the matrices
${\mathcal P}$, ${\mathcal P}_g$, and ${\mathcal G}_g$, as defined in
(\ref{matrixGg}), (\ref{matrixPg}), and (\ref{matrixGg}),
respectively, that were used in \S \ref{sec4} in our analysis of the
small eigenvalues. Recall that the diagonal entries in the matrices in
(\ref{appG:g_mats}) were defined in (\ref{computation533insec51new})
in terms of the regular part $R$ of the Green's function (see
(\ref{g:decomp})).

We first show that $\nabla G={\mathcal P}$. To establish this, we use
the decomposition (\ref{g:decomp}) to obtain
\begin{equation}\label{appG:1}
  G_{x}(x;x_k) = \left\{\begin{array}{ll}
                          \frac{\mu}{2d_1} + R_x(x;x_k) \,,&x>x_k\,,\\
                          -\frac{\mu}{2d_1} + R_x(x;x_k) \,,&x<x_k\,.\\
\end{array}
\right.
\end{equation}
As such, we identify that the average across the $k^{\mbox{th}}$ spike is
simply
$\langle
G_x\rangle_k={\left(G_x(x_k^+;x_k)+G_x(x_k^-;x_k)\right)/2}=R_x(x_k;x_k)$.
By comparing (\ref{appG:g_mats}) and (\ref{matrixGg}), and
recalling (\ref{computation533insec51new}) for $j=k$, we conclude that
$\nabla G={\mathcal P}$.

Next, we show that $\left(\nabla \mathcal G\right)^T=-{\mathcal P}_g$.
We first differentiate the BVP (\ref{greenequation}) for $G(x;x_k)$ with
respect to $x_k$ to get
\begin{equation*}
  \frac{d_1}{\mu}  \left( \partial_{x_k} G(x;x_k)\right)_{xx}  + \bar{u}
  \left(\partial_{x_k} G(x;x_k) \right) = - \delta^{\prime}(x-x_k)\,; \qquad
  \partial_{x} \left(\partial_{x_k} G(x;x_k) \right)\vert_{x=\pm 1}=0 \,.
\end{equation*}
By comparing this result with the BVP (\ref{green:dipole_small})
satisfied by the dipole Green's function, we conclude that
\begin{equation}\label{appG:Gtog}
    \partial_{x_k} G(x;x_k) = - g(x;x_k)\,, \qquad -1<x<1 \,,
\end{equation}
so that for $j\neq k$ we have
$\partial_{x_k} G(x_j;x_k) = - g(x_j;x_k)$.  It follows that the
off-diagonal entries in $\left(\nabla {\mathcal G}\right)^T$ and
${\mathcal P}_g$ are identical. For the diagonal entries, where $j=k$,
we use (\ref{appG:Gtog}) and the decomposition (\ref{g:decomp}) to obtain
\begin{align}\label{appG:gexp}
  g(x;x_k) =
 \left\{\begin{array}{ll}
          \partial_{x_k}\left( \frac{\mu}{d_1}(x-x_k) + R(x;x_k)\right)=
-\frac{\mu}{d_1} - \partial_{x_k} R(x;x_k)     \,,&x>x_k\,,\\ 
          \partial_{x_k}\left( -\frac{\mu}{d_1}(x-x_k) + R(x;x_k)\right)=
\frac{\mu}{d_1} - \partial_{x_k} R(x;x_k)    \,,&x< x_k\,,\\
\end{array}
\right.
\end{align}
Upon defining $\langle g\rangle_k=\frac{1}{2} \left(
  g(x_{k}^{+};x_k) +g(x_{k}^{-};x_k)\right)$, we conclude from (\ref{appG:gexp})
and the reciprocity $R(x;y)=R(y;x)$ of the Green's function that
$\langle g\rangle_k=-\partial_{x_k}R(x;x_k)\vert_{x=x_k}=
-\partial_{x_k}R(x_k;x)\vert_{x=x_k}$. This implies that the diagonal
entries of $\mathcal P_g$ in (\ref{matrixPg}) are the same as those of
$(\nabla \mathcal G)^T$ in (\ref{appG:g_mats}). It follows that
$\left(\nabla \mathcal G\right)^T={\mathcal P}^T = -{\mathcal P}_g$. We
remark that the relation ${\mathcal P}^T = -{\mathcal P}_g$ was also
derived using an alternative approach in (\ref{app:pt_pg}) at the end
of Appendix \ref{appensecB}.

Our next identity is to establish that
$\nabla^2 \mathcal G=-{\mathcal G}_g$.  The equivalence between the
off-diagonal entries in these matrices, where $j\neq k$, is
established by setting $x=x_j$ in (\ref{appG:Gtog}) and
differentiating in $x_j$ to obtain
\begin{equation*}
  \partial_{x_j} \left[ \partial_{x_k} G(x_j;x_k)\right] = -
  \partial_{x_j} g(x_j;x_k) = - \partial_{x} g(x;x_k)\vert_{x=x_j} \,.
\end{equation*}
Next, we differentiate (\ref{appG:gexp}) with
respect to $x$ and upon evaluating at $x=x_k$, we compare the
resulting expression with (\ref{computation533insec51new}) to obtain
that
\begin{equation*}
  g_{x}(x:x_k)\vert_{x=x_k} = -\frac{\partial}{\partial x}\vert_{x=x_k}
  \frac{\partial}{\partial y} \vert_{y=x_k} R(x,y) =
  -\partial^2_{x_k} G(x_j;x_k) \,, \quad j=k\,.
\end{equation*}
We conclude that the diagonal entries in $\nabla^2 \mathcal G$ and
$-{\mathcal G}_g$ are also identical. It follows that
$\nabla^2 \mathcal G=-{\mathcal G}_g$.

Finally, we calculate $R_{xx}(x;x_j)\vert_{x=x_j}$ as needed in
(\ref{daes:term2_more}). By using the decomposition (\ref{g:decomp})
we write (\ref{appG:1}) as
\begin{equation*}
  G_x(x;x_j) = -\frac{\mu}{2d_1} + \frac{\mu}{d_1} H(x-x_j) +
  R_x(x;x_j)\,,
\end{equation*}
where $H(z)$ is the Heavyside function. Therefore,
$G_{xx}(x;x_j) = \frac{\mu}{d_1} \delta(x-x_j) + R_{xx}(x;x_j)$ on
$|x|<1$.  Upon substituting this expression into the BVP
(\ref{greenequation}) for $G$, we conclude that
$R_{xx}(x;x_j) = \frac{\bar{u}\mu}{d_1} G(x;x_j)$, so that
\begin{equation}\label{appendixG:rxx}
  R_{xx}(x_j;x_j) = \frac{\bar{u}\mu}{d_1} G(x_j;x_j) \,.
\end{equation}

\end{appendix}
\bibliographystyle{plain}
\bibliography{ref}
\end{document}